\theoremstyle{plain}
 \newtheorem{theorem}{Theorem}
 \newtheorem{corollary}{Corollary}
\newtheorem{lemma}{Lemma}
\DeclareMathOperator{\re}{Re}
\begin{document}

\title[On the constant factor in several asymptotic estimates]{On the constant factor in several related asymptotic estimates}

\begin{abstract}
We establish formulas for the constant factor in several asymptotic estimates related to the distribution of integer and polynomial divisors. 
The formulas are then used to approximate these factors numerically.
\end{abstract}

\author{Andreas Weingartner}
\address{ 
Department of Mathematics,
351 West University Boulevard,
 Southern Utah University,
Cedar City, Utah 84720, USA}
\email{weingartner@suu.edu}
\maketitle

\section{Introduction}
A number of asymptotic estimates \cite{ PTW, PDD, DPD}, related to the distribution of divisors of integers and of polynomials,
contain a constant factor that is as yet undetermined. 
In this note, we give an explicit formula for this constant as the sum of an infinite series.
As a result, we are able to approximate this factor numerically in several instances and improve some of the error terms in  \cite{ PDD, DPD}.
For more extensive background information, we refer the reader to \cite{ PTW, PDD, DPD} and the references therein.

We begin by recalling the general setup from \cite{PDD}.
Let $\theta$ be a real-valued arithmetic function. 
Let $\mathcal{B}=\mathcal{B}_\theta$ be the set of positive integers containing $n=1$ and all those $n \ge 2$ with prime factorization $n=p_1^{\alpha_1} \cdots p_k^{\alpha_k}$, $p_1<p_2<\ldots < p_k$, which satisfy 
\begin{equation*}\label{Bdef}
p_{i} \le \theta\big( p_1^{\alpha_1} \cdots p_{i-1}^{\alpha_{i-1}} \big) \qquad (1\le i \le k).
\end{equation*}
We write $B(x)$ to denote the number of integers $n\le x$ in $\mathcal{B}$.
Theorem 1.2 of \cite{PDD} states that, if 
\begin{equation}\label{theta}
\theta(1)\ge 2, \quad   n\le  \theta(n) \le A n (\log 2n)^a (\log\log 3n)^b \quad (n\ge 2)
 \end{equation}
for suitable constants $A\ge 1$, $a<1$, $b$, then
\begin{equation}\label{Basymp}
B(x)=\frac{c_\theta x}{\log x} \Big\{1+O_\theta\big( (\log x)^{a-1} (\log\log x)^b  \big)\Big\},
\end{equation}
for some positive constant $c_\theta$. This result still holds if $a=1$ and $b<-1$, provided $b$ is replaced by $b+1$ in
the error term of \eqref{Basymp}.

\begin{theorem}\label{thm1}
Assume $\theta$ satisfies \eqref{theta}.
The constant $c_\theta$ in \eqref{Basymp} is given by
$$
c_\theta = \frac{1}{1-e^{-\gamma}} \sum_{n\in \mathcal{B}} \frac{1}{n}  \Biggl( \sum_{p\le \theta(n)}\frac{\log p}{p-1} - \log n\Biggr) \prod_{p\le \theta(n)} \left(1-\frac{1}{p}\right),
$$
where $\gamma$ is Euler's constant and $p$ runs over primes.
\end{theorem}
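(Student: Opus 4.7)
Let me denote $f_a(s) := a^{-s}\prod_{p \le \theta(a)}(1-p^{-s})$. The plan rests on the Dirichlet series identity
\begin{equation*}
\sum_{a \in \mathcal{B}} f_a(s) = 1 \qquad (\re s > 1), \qquad (\star)
\end{equation*}
which follows from the unique factorisation $n = ab$ of every positive integer $n$ with $a \in \mathcal{B}$ and $b$ either equal to $1$ or having smallest prime factor exceeding $\theta(a)$, combined with the Euler product for $\zeta(s)$. A direct computation yields
\begin{equation*}
f_a'(1) = \frac{1}{a}\Bigl(\sum_{p \le \theta(a)}\frac{\log p}{p-1} - \log a\Bigr)\prod_{p \le \theta(a)}\Bigl(1 - \frac{1}{p}\Bigr),
\end{equation*}
so the theorem is equivalent to showing $\sum_{a \in \mathcal{B}} f_a'(1) = (1 - e^{-\gamma})c_\theta$. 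Absolute convergence of this series follows from Mertens' theorem (each summand is $O(1/(a\log a))$) together with $\sum_{a \in \mathcal{B}} 1/(a\log a) < \infty$, itself a consequence of $B(x) \sim c_\theta x/\log x$ by partial summation.

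Differentiating $(\star)$ term-by-term, which is legitimate on $\re s \ge 1 + \eps$ for every $\eps > 0$, gives $\sum_a f_a'(s) = 0$ for $\re s > 1$. A naive passage to $s = 1$ would force $\sum_a f_a'(1) = 0$; it does not, because the convergence near $s = 1$ is non-uniform. Concretely, the prime sum $\sum_{p \le z}(\log p)/(p^s - 1)$ tends to $-\zeta'(s)/\zeta(s) \sim 1/(s-1)$ as $z \to \infty$ with $s > 1$ fixed, whereas at $s = 1$ Mertens' theorem gives the value $\log z - \gamma + o(1)$. The defect arising from this non-uniformity is exactly what encodes $c_\theta$.

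To evaluate the defect, I would combine $(\star)$ with the elementary sieve identity
\begin{equation*}
\lfloor x \rfloor = \sum_{a \in \mathcal{B},\,a \le x}\Phi(x/a,\theta(a)), \qquad \Phi(y,z) := \#\{n \le y : n=1 \text{ or } P^{-}(n) > z\},
\end{equation*}
and expand using Mertens' formula $\prod_{p \le z}(1 - 1/p) = e^{-\gamma}/\log z + O(1/\log^2 z)$. The factor $1 - e^{-\gamma}$ traces back to the gap between Buchstab's function at $u = 1$ (where $\omega(1) = 1$) and its limit $\omega(\infty) = e^{-\gamma}$: for $a$ with $a\theta(a) \ge x$ one has $\Phi(x/a,\theta(a)) = 1$, whereas the Mertens prediction $(x/a)\prod_{p \le \theta(a)}(1 - 1/p)$ records only the $e^{-\gamma}$ contribution. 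Partial summation of this discrepancy against $B(x) \sim c_\theta x/\log x$ should deliver $(1 - e^{-\gamma}) c_\theta$. The main obstacle is to carry out this matching rigorously, controlling the Mertens errors uniformly in the transition range where $a\theta(a)$ is comparable to $x$; once the boundary constant is identified, rearrangement of $(\star)$ at $s = 1$ yields the theorem.
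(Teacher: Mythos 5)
Your setup coincides with the paper's own first half: your identity $(\star)$ is exactly the paper's Lemma 1 (proved the same way, by factoring every integer as $ab$ with $a\in\mathcal{B}$, $P^-(b)>\theta(a)$, and dividing by $\zeta(s)$), the termwise-differentiated identity $\sum_a f_a'(s)=0$ for $\re s>1$ is Lemma 2, and your diagnosis that $c_\theta$ is encoded in the non-uniformity as $s\to1^+$ is precisely the paper's point of departure. The genuine gap is that you stop where the real work begins. The paper quantifies the defect by truncating the sum at $N$ and coupling the truncation with $s_N=1+1/\log^2N$: the head $\sum_{a\le N}f_a'(s_N)$ tends to $\sum_a f_a'(1)$, while the tail $-\sum_{a>N}f_a'(s_N)$ is evaluated by Abel summation against $B(x)=c_\theta x(1+o(1))/\log x$, using $\prod_{p\le y}(1-p^{-s})=e^{-\gamma+I((s-1)\log y)}(1+O(1/\log y))/\log y$ with $I(y)=\int_0^y(1-e^{-t})t^{-1}\,dt$, and $\sum_{p\le n}\frac{\log p}{p^s-1}=\frac{1-n^{1-s}}{s-1}+O(1)$. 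After the substitution $u=(s-1)\log y$ the integrand becomes exactly $\frac{d}{du}\bigl(I'(u)e^{I(u)}\bigr)$, and evaluating $I'(u)e^{I(u)}$ between $u\to0^+$ (value $1$) and $u\to\infty$ (value $e^\gamma$) produces the factor $e^\gamma-1$, hence $1-e^{-\gamma}$ after multiplying by the prefactor $e^{-\gamma}c_\theta$. Nothing in your proposal substitutes for this computation.

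Your alternative plan---matching $\lfloor x\rfloor=\sum_{a\le x}\Phi(x/a,\theta(a))$ against the Mertens prediction and attributing $1-e^{-\gamma}$ to the Buchstab boundary---is a sensible heuristic for where the constant comes from, but as written it is not a proof; you acknowledge as much (``the main obstacle is to carry out this matching rigorously''). In particular the discrepancy is not confined to the range $a\theta(a)\ge x$ where $\Phi=1$: one must control $\Phi(x/a,\theta(a))$ uniformly through the whole transition range where $\log(x/a)/\log\theta(a)$ is bounded, i.e.\ the full Buchstab regime, and it is exactly this uniform control that the paper's $s_N$-device is designed to avoid. A minor further point: your summands $f_a'(1)$ are $O(\log\log a/(a\log a))$ rather than $O(1/(a\log a))$, since $\log\theta(a)-\log a$ can be of order $\log\log a$ under the hypothesis \eqref{theta}; absolute convergence still holds, but the stated bound is not quite correct.
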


\subsection{Practical numbers}
A well known example is the set $\mathcal{P}$ of \emph{practical numbers} \cite{Sri}, i.e. integers $n$ with the property that every natural number $ m \le n$ can be expressed as a sum of distinct positive divisors of $n$. Stewart \cite{Stew} and Sierpinski \cite{Sier} found that $\mathcal{P}=\mathcal{B}_\theta$ if $\theta(n)=\sigma(n)+1$, where $\sigma(n)$ denotes the sum of the positive divisors on $n$. Since $n+1\le \sigma(n)+1 \ll n \log\log 3n$, 
\eqref{Basymp} shows that the number of practical numbers up to $x$ satisfies
\begin{equation}\label{Pasymp}
P(x)=\frac{c x}{\log x} \left\{1+O\left( \frac{\log \log x}{\log x}  \right)\right\},
\end{equation}
for some $c>0$. Theorem \ref{thm1} states that
\begin{equation}\label{pracon}
c= \frac{1}{1-e^{-\gamma}} \sum_{n\in \mathcal{P}} \frac{1}{n}  \Biggl( \sum_{p\le \sigma(n)+1}\frac{\log p}{p-1} - \log n\Biggr) \prod_{p\le \sigma(n)+1} \left(1-\frac{1}{p}\right),
\end{equation}
from which we will derive the following bounds.

\begin{corollary}\label{Pcor}
The constant $c$ in \eqref{Pasymp} satisfies $1.311 < c <1.693$.
\end{corollary}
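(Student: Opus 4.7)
The plan is to apply formula \eqref{pracon} directly: split the sum at a threshold $N$, evaluate the partial sum $S_N$ over practical numbers $n \le N$ numerically, and control the remainder $c - S_N$ by an effective tail estimate.

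For the partial sum, I would enumerate all practical numbers $n \le N$ (there are $\approx cN/\log N$ of them, so a moderate $N$ such as $10^6$ produces a manageable list). For each such $n$, compute $\sigma(n)$ and then the corresponding summand to high precision: the primes up to $\sigma(n)+1 \ll n\log\log n$ can be tabulated once, and the logarithms evaluated in interval arithmetic. This yields rigorous two-sided enclosures for $S_N$.

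For the tail bound, the strategy is to combine effective Mertens-type estimates
$$
\sum_{p \le y} \frac{\log p}{p-1} = \log y + O(1), \qquad \prod_{p \le y}\pts{1-\frac{1}{p}} = \frac{e^{-\gamma}}{\log y}\pts{1 + O\pts{\frac{1}{\log y}}},
$$
together with an effective Robin-type bound $\sigma(n)/n \le e^\gamma\log\log n + O(1/\log\log n)$, to deduce
$$
\abs{\frac{1}{n}\pts{\sum_{p \le \sigma(n)+1}\frac{\log p}{p-1} - \log n}\prod_{p \le \sigma(n)+1}\pts{1 - \frac{1}{p}}} \ll \frac{\log\log n}{n \log n}.
$$
Feeding this into partial summation against an explicit upper bound of the form $P(x) \le C x/\log x$ then gives
$$
\abs{c - S_N} \ll \frac{\log\log N}{\log^2 N},
$$
with an effective implicit constant; choosing $N$ large enough pushes this comfortably below the width $1.693 - 1.311 = 0.382$ of the claimed interval.

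The main obstacle is the careful bookkeeping of effective constants. Each input (Rosser--Schoenfeld style Mertens bounds, Robin's inequality, and an explicit upper bound on $P(x)$) contributes its own constants, and combining them to produce a tail bound genuinely smaller than the desired gap while keeping $N$ within the range of direct enumeration requires some care. The deliberate width of the claimed interval is presumably chosen precisely to absorb the slack accumulated at each step.
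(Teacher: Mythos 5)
Your overall shape (finite partial sum plus an effective tail bound) matches the paper's, but the tail estimate as you describe it has two genuine gaps. First, you assume "an explicit upper bound of the form $P(x) \le C x/\log x$". No such bound is available: the asymptotic \eqref{Pasymp} has an ineffective implied constant, and extracting an explicit $C$ is essentially the problem the corollary is addressing. The paper has to prove the much weaker Lemma~\ref{PUB}, $P(x) \le 1.185\, x(\log x)^{-(\mu-\nu)}$ with $\mu-\nu = 0.086\ldots$, via the combinatorial fact $2^{\tau(n)}\ge n$ for practical $n$ and Norton's bound on $\sum_{n\le x}\tau(n)^\mu$. With only this exponent, your partial summation gives a tail decaying like $(\log N)^{-0.086}$, which is useless for any computable $N$. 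Second, even granting $P(x)\le Cx/\log x$, partial summation against $\sum_{n>N}\chi(n)\frac{\log\log n}{n\log n}$ yields $\ll \log\log N/\log N$, not $\log\log N/\log^2 N$ (substitute $u=\log y$ in $\int_N^\infty \frac{\log\log y}{y\log^2 y}\,dy$); you have lost a factor of $\log N$, and at $N=10^6$ the correct rate is only about $0.19$ times the implied constant, which does not comfortably fit inside the target gap.

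The paper avoids both problems with a device you are missing: the sieve identity \eqref{S1}, $1=\sum_{n\in\mathcal P}\frac1n\prod_{p\le\sigma(n)+1}(1-\tfrac1p)$, lets one compute the tail mass $\varepsilon_N=\sum_{n>N}\frac{\chi(n)}{n}\prod_{p\le\theta(n)}(1-\tfrac1p)$ \emph{exactly} as $1$ minus a finite computable sum, with no reference to $P(x)$ at all. The bounded part of $\Delta(n)$ (from below, $\Delta(n)>\log 2-\gamma-E(2n)$ using $\sigma(n)+1\ge 2n+1$ for practical $n$; from above, the $O(1)$ part of Robin's bound) is then multiplied by $\varepsilon_N$ directly. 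Only the unbounded piece $\log_3 n$ in the upper bound for $\Delta(n)$ needs a counting-function estimate, and the paper applies Lemma~\ref{PUB} only to the increment $\log_3 n-\log_3 M$ beyond a second, astronomically large threshold $M$ (with $\log_3 M=4.15$), which is why the weak exponent $\mu-\nu$ is survivable there. Without the identity \eqref{S1} and this two-threshold splitting, your plan does not close.
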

Corollary \ref{Pcor} is consistent with the empirical estimate $c \approx 1.341$ given by Margenstern \cite{Mar}.
The lack of precision in Corollary \ref{Pcor}, when compared with Corollary \ref{Dcor}, is due to the fact that $\theta(n)/n$ is not bounded when $\theta(n)=\sigma(n)+1$, which makes
it more difficult to estimate the tail of the series \eqref{pracon}.

\subsection{The distribution of divisors}
Another example is the set $\mathcal{D}_t$ of integers with \emph{$t$-dense} divisors \cite{Saias1, Ten86}, i.e. integers $n$ whose divisors $1=d_1 < d_2 < \ldots < d_{\tau(n)} = n$
satisfy $d_{i+1}\le t d_i$ for all $1\le i < \tau(n)$. 
Tenenbaum \cite[Lemma 2.2]{Ten86} showed that these integers are exactly the members of $\mathcal{B}_\theta$ if $\theta(n)=tn$.
When $t\ge 2$ is fixed, \eqref{Basymp} implies that the number of such integers up to $x$ satisfies
\begin{equation}\label{Dasymp}
D(x,t)=\frac{c_t \, x}{\log x} \left\{1+O_t\left( \frac{1}{\log x}  \right)\right\}.
\end{equation}
Theorem \ref{thm1} yields
\begin{equation}\label{tdensecon}
c_t= \frac{1}{1-e^{-\gamma}} \sum_{n\in \mathcal{D}_t} \frac{1}{n}  \Biggl( \sum_{p\le t n }\frac{\log p}{p-1} - \log n\Biggr) \prod_{p\le t n} \left(1-\frac{1}{p}\right).
\end{equation}
Comparing \eqref{Dasymp} with \cite[Cor. 1.1]{PDD}, we find that the constant factor $\eta(t)$ appearing in \cite[Thm. 1.3]{PDD} is given by $\eta(t)=c_t(1-e^{-\gamma})/\log t$.

We can now give numerical approximations for $c_t$ (and hence $\eta(t)$) based on \eqref{tdensecon}. 
The details behind these calculations will be described in Section \ref{CorThm1Sec}.
\begin{corollary}\label{Dcor}
Table \ref{table1} shows values of the factor $c_t$ appearing in \eqref{Dasymp}.
\end{corollary}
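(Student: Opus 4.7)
The plan is to exploit formula \eqref{tdensecon} directly: for each value of $t$ listed in Table \ref{table1}, I will compute a high-precision partial sum of the series and then rigorously bound the tail. Concretely, split
\[
c_t\pts{1-e^{-\gamma}} = S_N(t) + T_N(t),
\]
where $S_N(t)$ collects the terms with $n\le N$ and $T_N(t)$ collects the remainder; $N$ is chosen so that the two contributions together pin down $c_t$ to the number of digits claimed.

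For the head $S_N(t)$ I would enumerate $\mathcal{D}_t \cap [1,N]$ recursively via the definition of $\mathcal{B}_\theta$ with $\theta(n)=tn$: start from $n=1$ and extend by appending a new largest prime power $p^\alpha$ satisfying $p \le t n_{\text{prev}}$, carrying along the running values of $n$, $\log n$, $\sum_{p\le tn} \log p/(p-1)$, and $\prod_{p\le tn}(1-1/p)$, so that each child's term is assembled incrementally from its parent's data. High-precision arithmetic is needed to avoid loss of significance in the bracketed difference $\sum_{p\le tn} \log p/(p-1) - \log n$, whose two summands each grow like $\log n$ while their difference stays $O(1)$.

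For the tail $T_N(t)$ I would apply Mertens' theorems in effective form (for instance via Rosser--Schoenfeld),
\[
\sum_{p\le y}\frac{\log p}{p-1} = \log y + C_1 + O\pts{\frac{1}{\log y}}, \qquad \prod_{p\le y}\pts{1-\frac{1}{p}} = \frac{e^{-\gamma}}{\log y}\bra{1 + O\pts{\frac{1}{\log y}}},
\]
taken at $y = tn$, together with the density estimate $D(x,t) \ll_t x/\log x$ already available from \cite{PDD}. These majorise each term of the tail by an explicit constant times $1/(n\log n)$, and partial summation then gives $\abs{T_N(t)} \le K(t)/\log N$ with a fully explicit $K(t)$ depending on $t$.

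The main obstacle is computational rather than conceptual: because $\abs{\mathcal{D}_t \cap [1,N]}$ grows like $N/\log N$, reaching the precision exhibited in Table \ref{table1} requires $N$ to be pushed fairly far, so the enumeration must avoid redundant prime sieving and reuse the parent's partial sums and products. The boundedness of $\theta(n)/n=t$ is exactly what makes the tail bound sharp enough here; it is the failure of this boundedness when $\theta(n)=\sigma(n)+1$ that forces the much weaker conclusion of Corollary \ref{Pcor}.
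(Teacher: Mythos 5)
Your computation of the head $S_N(t)$ matches what the paper does, but your treatment of the tail is where the proposal breaks down, and the gap is quantitative rather than cosmetic. A bound of the shape $\abs{T_N(t)}\le K(t)/\log N$ cannot deliver the precision shown in Table \ref{table1}: even with an optimistic $K(t)$ of order $(\log t-\gamma)e^{-\gamma}c_t\approx 0.08$ for $t=2$, taking $N=2^{25}$ gives only $\abs{T_N}\lesssim 5\cdot 10^{-3}$, i.e.\ two decimal digits, whereas the paper certifies $1.224806<c_2<1.224852$. To reach five digits with a tail decaying like $1/\log N$ you would need $N$ beyond the reach of any enumeration. Moreover, making $K(t)$ "fully explicit" requires an explicit numerical upper bound for $D(x,t)$; the bound $D(x,t)\ll_t x/\log x$ from \cite{PDD} has inexplicit constants, and producing an explicit one with a constant close to $c_t$ is itself a nontrivial task (compare Lemma \ref{PUB}, which the paper proves only for practical numbers, precisely because there the trick below is not enough).

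The idea you are missing is that the tail should not be majorised term by term. Write each tail term as $\frac{\chi(n)}{n}\,\Delta(n)\prod_{p\le tn}(1-1/p)$ with $\Delta(n)=\sum_{p\le tn}\frac{\log p}{p-1}-\log n=\log t-\gamma+\eta(tn)$, which is an almost \emph{constant} factor: Lemma \ref{pslem} gives $\abs{\eta(tn)}\le 0.084/\log^2(tn)$ for $tn\ge 2^{25}$. The remaining weight mass $\varepsilon_N=\sum_{n>N}\frac{\chi(n)}{n}\prod_{p\le tn}(1-1/p)$ is then computed \emph{exactly} (to machine precision) from the identity \eqref{S1}, as $1$ minus the corresponding head sum --- no density estimate for $\mathcal{D}_t$ is needed at all. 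This yields the sandwich \eqref{alphaest}, $\alpha_N+L_N\varepsilon_N<\alpha<\alpha_N+R_N\varepsilon_N$ with $R_N-L_N=2E(tN)=0.168/\log^2(tN)$, so the uncertainty is $\varepsilon_N\cdot O(1/\log^2(tN))$ rather than $O(1/\log N)$, which is what makes $N\le 2^{30}/t$ sufficient for the digits displayed. You should restructure your tail argument around \eqref{S1} and Lemma \ref{pslem}; the enumeration scheme you describe for the head can then be kept as is, augmented to accumulate the weight sum needed for $\varepsilon_N$.
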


\begin{table}[h]
  \begin{tabular}{ | c | c | }
    \hline
    $\ t \ $ & $c_t$ \\ \hline
    $2$ &  1.2248...  \\ \hline
    $e$ & 1.5242...  \\ \hline
    $3$ &  2.0554... \\ \hline
    $4$ & 2.4496... \\ \hline
    $5$ & 2.9541...  \\ \hline
     \end{tabular}
     \quad
     \begin{tabular}{ | c | c | }
    \hline
    $\ t \ $ & $c_t$ \\ \hline   
     $6$ & 3.247... \\ \hline
    $7$ & 3.644... \\ \hline
    $8$ & 3.850... \\ \hline
     $9$ & 4.041... \\ \hline
     $10$ & 4.227... \\ \hline
      \end{tabular}
       \quad
     \begin{tabular}{ | c | c | }
    \hline
    $\ t \ $ & $c_t$ \\ \hline   
    $20$ & 5.742... \\ \hline
     $40$ & 7.210... \\ \hline
     $60$ & 8.113... \\ \hline
     $80$ & 8.761... \\ \hline
       $100$ & 9.248... \\ \hline
      \end{tabular}
       \quad
     \begin{tabular}{ | c | c | }
    \hline
    $\ t \ $ & $c_t$ \\ \hline   
      $10^3$ & 14.449... \\ \hline
      $10^4$ & 19.689... \\ \hline
      $10^5$ & 24.937... \\ \hline
      $10^6$ & 30.187... \\ \hline
      $10^7$ & 35.43.... \\ \hline
      \end{tabular}
          \medskip
     \caption{Truncated values of $c_t$ derived from Theorem \ref{thm1}.}\label{table1}
\end{table}

For example, the number of integers $n\le x$, which have a divisor in the interval $(y,2y]$ for every $y\in [1,n)$, is
$$D(x,2)=1.2248... \frac{ x}{\log x} \left\{1+O\left( \frac{1}{\log x}  \right)\right\},$$
so that these integers are about $22.5\%$ more numerous than the primes.

Corollary 1.1 of \cite{PDD} gives an estimate for $D(x,t)$ which holds uniformly in $t$. 
It states that, uniformly for $x\ge t\ge 2$, 
\begin{equation}\label{Dasymp2}
D(x,t)=\frac{c_t \, x}{\log (tx)} \left\{1+O\left(\frac{1}{\log x}+\frac{\log^2 t}{\log^2 x}\right)\right\},
\end{equation}
where $c_t=(1-e^{-\gamma})^{-1}  \log t + O(1)$. We can improve the estimate for $c_t$ with the help of Theorem \ref{thm1}.
\begin{corollary}\label{Dcor2}
Let $c_t$  be the factor in \eqref{Dasymp} and \eqref{Dasymp2}. Define $\delta_t$ implicitly by
\begin{equation}\label{cteq}
c_t = \frac{\log(t e^{-\gamma}) +\delta_t}{1-e^{-\gamma}}.
\end{equation}
We have 
$ \delta_t  \ll \exp(-\sqrt{\log t})$ and 
\begin{equation}\label{deltat}
| \delta_t |  \le  \frac{0.084}{\log^2 t}\qquad (t \ge 2^{25}).
\end{equation}
Assuming the Riemann hypothesis, we have 
$|\delta_t| \le \frac{\log^2 t}{7\sqrt{t}}$ for $t\ge 55$.
\end{corollary}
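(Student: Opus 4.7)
The plan is to apply Theorem \ref{thm1} with $\theta(n)=tn$, substitute the prime number theorem, and bound the resulting error. Set $R(x):=S(x)-\log x+\gamma$ with $S(x):=\sum_{p\le x}\log p/(p-1)$, and $Q(x):=V(x)\log x\cdot e^{\gamma}-1$ with $V(x):=\prod_{p\le x}(1-1/p)$. Standard consequences of the prime number theorem yield $|R(x)|,|Q(x)|\ll\exp(-c\sqrt{\log x})$ unconditionally, and $|R(x)|,|Q(x)|\ll\log^2 x/\sqrt x$ on the Riemann hypothesis; explicit numerical versions are available from Rosser--Schoenfeld, Dusart, and Schoenfeld.

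Since $S(tn)-\log n=\log t-\gamma+R(tn)$, Theorem \ref{thm1} gives
\begin{equation*}
(1-e^{-\gamma})c_t=(\log t-\gamma)\sum_{n\in\mathcal{D}_t}\frac{V(tn)}{n}+\sum_{n\in\mathcal{D}_t}\frac{R(tn)\,V(tn)}{n},
\end{equation*}
so that, with $\Sigma_0(t):=\sum_{n\in\mathcal{D}_t}V(tn)/n$ and $\Sigma_1(t):=\sum_{n\in\mathcal{D}_t}R(tn)V(tn)/n$, we have $\delta_t=(\log t-\gamma)(\Sigma_0(t)-1)+\Sigma_1(t)$. The bound on $\Sigma_1(t)$ is the easy part: split the series at a threshold $T=T(t)$, apply the pointwise bound on $R(tn)$ for $n\le T$ (using $\sum_{n\le T,n\in\mathcal{D}_t}V(tn)/n\ll 1$), and for $n>T$ use $V(tn)\ll 1/\log(tn)$ together with the density bound \eqref{Dasymp2} from \cite{PDD}, which gives $\sum_{n>T,n\in\mathcal{D}_t}1/(n\log(tn))\ll 1/\log(tT)$; optimizing $T$ produces the stated decay rates for $\Sigma_1(t)$.

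The delicate part is showing that $(\log t-\gamma)(\Sigma_0(t)-1)$ decays at the claimed rate. I would write $V(tn)=e^{-\gamma}(1+Q(tn))/\log(tn)$, so $\Sigma_0(t)=e^{-\gamma}\sum_{n\in\mathcal{D}_t}1/(n\log(tn))$ up to a contribution from $Q$ that is PNT-small by the same splitting argument above. One then evaluates $\sum_{n\in\mathcal{D}_t}1/(n\log(tn))$ by partial summation, carefully tracking the transition of $D(x,t)$ between the regime $x\le t$ (every integer up to $t$ is $t$-dense, so $D(x,t)=\lfloor x\rfloor$) and the regime $x\gg t$ (where the density is $c_t/\log(tx)$ by \eqref{Dasymp2}). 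The two regimes combine to give $e^{-\gamma}\sum 1/(n\log(tn))=1+o(1/\log t)$, the cancellation being forced precisely because the leading value of $c_t$ is $(\log t-\gamma)/(1-e^{-\gamma})$.

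For the explicit bound $|\delta_t|\le 0.084/\log^2 t$ at $t\ge 2^{25}$, I would feed Dusart's explicit estimates for $R(x)$ and $Q(x)$ into the above scheme with an optimized $T$, and use the explicit form of \eqref{Dasymp2} from \cite{PDD}; patient numerical bookkeeping then yields the constant $0.084$. Under RH, Schoenfeld's bound $|\pi(x)-\li(x)|\le\sqrt x\log x/(8\pi)$ (valid for $x\ge 2657$) substituted into the same scheme gives $|\delta_t|\le\log^2 t/(7\sqrt t)$ for $t\ge 55$. The principal obstacle is handling $\Sigma_0(t)-1$ cleanly, since this requires using \eqref{Dasymp2} uniformly in $t$ and tracking its error through partial summation; once that is done, the remaining work is routine, if computationally intensive.
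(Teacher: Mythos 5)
There is a genuine gap, and it sits exactly where you flag the ``delicate part.'' The paper's proof never has to analyze $\Sigma_0(t)-1$ at all, because $\Sigma_0(t)=1$ \emph{exactly}: identity \eqref{S1} (Theorem 1 of \cite{SPA}, i.e.\ Lemma \ref{lem1} at $s=1$, valid since $B(x)=o(x)$) states that $\sum_{n\in\mathcal{B}}\frac{1}{n}\prod_{p\le\theta(n)}(1-1/p)=1$. With $\theta(n)=tn$ this is precisely your $\Sigma_0(t)$. Hence $\delta_t=\Sigma_1(t)=\sum_{n\in\mathcal{D}_t}\frac{\eta(tn)}{n}\prod_{p\le tn}(1-1/p)$ is a weighted average of the values $\eta(tn)$ with positive weights summing to one, so $\inf_{n\ge1}\eta(nt)\le\delta_t\le\sup_{n\ge1}\eta(nt)$, and all three bounds follow at once from Lemma \ref{pslem} together with the monotonicity of $E(x)$ and of $\log^2x/(7\sqrt{x})$ (since $tn\ge t$). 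No splitting at a threshold $T$ and no density estimate for $D(x,t)$ is needed.

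Your proposed route for $\Sigma_0(t)-1$ does not work as described. First, it is circular: you argue that the cancellation is ``forced precisely because the leading value of $c_t$ is $(\log t-\gamma)/(1-e^{-\gamma})$,'' but that leading value is exactly what the corollary is establishing; the unconditional input from \cite{PDD} is only $c_t=(1-e^{-\gamma})^{-1}\log t+O(1)$, which is far too weak. Second, even granting a second-order asymptotic, the error term in \eqref{Dasymp2} carries unspecified implied constants, so partial summation against it can never produce the explicit bound $|\delta_t|\le 0.084/\log^2 t$, nor the RH bound $\log^2t/(7\sqrt{t})$; indeed $(\log t-\gamma)(\Sigma_0(t)-1)$ would need to be controlled to accuracy $o(\exp(-\sqrt{\log t}))$, which your scheme cannot deliver. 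The same issue affects your treatment of $\Sigma_1(t)$: without the exact identity (or at least an explicit bound on $\Sigma_0(t)$), the tail $n>T$ only yields non-explicit $O$-constants. The one-line fix is to invoke \eqref{S1}; after that your bounding of $\eta$ via Rosser--Schoenfeld/Axler/Schoenfeld is exactly what the paper does in Lemma \ref{pslem}.
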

\begin{table}[h]
     \begin{tabular}{ | c | l | }
    \hline
    $\ t \ $ & $c_t$ \\ \hline   
    $10^8$ &  40.68...  \\ \hline
    $10^9$ & 45.93...  \\ \hline
    $10^{10}$ & 51.189...  \\ \hline
     \end{tabular}
               \quad
     \begin{tabular}{ | c | l | }
    \hline
    $\ t \ $ & $c_t$ \\ \hline   
    $10^{20}$ & 103.69...  \\ \hline
    $10^{30}$ & 156.200...  \\ \hline
    $10^{40}$ & 208.7063...  \\ \hline
             \end{tabular}
               \quad
     \begin{tabular}{ | c | l | }
    \hline
    $\ t \ $ & $c_t$ \\ \hline   
    $10^{60}$ & 313.7176...  \\ \hline
    $10^{80}$ & 418.7289...  \\ \hline
    $10^{100}$ & 523.7401...  \\ \hline
             \end{tabular}
   \medskip
     \caption{Truncated values of $c_t$ derived from \eqref{cteq} and \eqref{deltat}.}\label{table2}
\end{table}
Assuming the Riemann hypothesis, the last entry in Table \ref{table2} is
$$c_{10^{100}} = 523.74019053615422813260729554671054989578274943...$$

Combining the estimate \eqref{Dasymp2} with Corollary \ref{Dcor2}, we obtain the following improvement of \cite[Corollary 1.2]{PDD}.
\begin{corollary}\label{Dcor3}
Uniformly for $x\ge t\ge 2$, we have
\begin{equation*}\label{Dasymp3}
D(x,t)=\frac{x \log(t e^{-\gamma} ) }{(1-e^{-\gamma})\log (tx)} \left\{1+O\left(\frac{1}{\log x}+\frac{\log^2 t}{\log^2 x}+\frac{1}{\exp(\sqrt{\log t})}\right)\right\}.
\end{equation*}
The error term $O(\exp(-\sqrt{\log t}))$ can be replaced by $O(\log(t) /\sqrt{t})$ if the Riemann hypothesis holds.
\end{corollary}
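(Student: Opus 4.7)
The plan is to derive Corollary \ref{Dcor3} as a direct consequence of the uniform estimate \eqref{Dasymp2} combined with the precise form of $c_t$ given in Corollary \ref{Dcor2}. First, I would substitute the defining identity $c_t = (\log(te^{-\gamma}) + \delta_t)/(1-e^{-\gamma})$ from \eqref{cteq} into \eqref{Dasymp2}. This yields
$$
D(x,t) = \frac{x\log(te^{-\gamma})}{(1-e^{-\gamma})\log(tx)} \left(1 + \frac{\delta_t}{\log(te^{-\gamma})}\right)\left\{1 + O\left(\frac{1}{\log x} + \frac{\log^2 t}{\log^2 x}\right)\right\}.
$$
The goal is then to absorb the factor $1 + \delta_t/\log(te^{-\gamma})$ into the error term in the stated form.

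The key observation is that for $t \ge 2$ the quantity $\log(te^{-\gamma})$ is bounded below by the positive constant $\log(2e^{-\gamma}) > 0$, so division by it costs only an $O(1)$ factor. Using the unconditional bound $\delta_t \ll \exp(-\sqrt{\log t})$ from Corollary \ref{Dcor2}, I would conclude that $\delta_t/\log(te^{-\gamma}) \ll \exp(-\sqrt{\log t})$, uniformly in $t \ge 2$. Expanding the product of the two bracketed factors and discarding lower order cross terms (each cross term is bounded by the maximum of the two error terms, hence absorbed into their sum) produces the unconditional estimate of Corollary \ref{Dcor3}.

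For the conditional statement, I would split the range of $t$. For $2 \le t \le 55$ the claimed error $O(\log(t)/\sqrt{t})$ is bounded below by a positive constant, so \eqref{Dasymp2} already suffices. For $t \ge 55$, the Riemann hypothesis bound $|\delta_t| \le \log^2(t)/(7\sqrt{t})$ combined with $\log(te^{-\gamma}) \asymp \log t$ gives $\delta_t/\log(te^{-\gamma}) \ll \log(t)/\sqrt{t}$, which after the same expansion produces the desired improved error term.

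There is no real obstacle here: the argument is essentially algebraic, consisting of substitution, factoring, and combining error terms. The only minor technical point is to handle small $t$ separately so that the division by $\log(te^{-\gamma})$ does not falsely appear singular; this is resolved by the fact that $\log(te^{-\gamma})$ is bounded away from zero on $[2,\infty)$, so all relative errors behave as claimed.
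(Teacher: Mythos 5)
Your proposal is correct and follows exactly the route the paper intends: the paper derives Corollary \ref{Dcor3} simply by substituting the expression \eqref{cteq} for $c_t$ into \eqref{Dasymp2} and invoking the bounds on $\delta_t$ from Corollary \ref{Dcor2}. Your added care about $\log(te^{-\gamma})$ being bounded away from zero for $t\ge 2$ and the separate treatment of $2\le t\le 55$ in the conditional case are exactly the (routine) details the paper leaves implicit.
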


\subsection{$\varphi$-practical numbers}\label{phiprasec}

An integer $n$ is called \emph{$\varphi$-practical} \cite{Thom} if $X^n-1$ has divisors in $\mathbb{Z}[X]$ of every degree up to $n$. 
The name comes from the fact that $X^n-1$ has this property if and only if each natural number $m\le n$ is a subsum 
of the multiset $\{\varphi(d): d|n\}$, where $\varphi$ is Euler's function. These numbers were first
studied by Thompson \cite{Thom}, who showed that their counting function $P_\varphi(x)$ has order of magnitude $x/\log x$.
Pomerance, Thompson and the author \cite{PTW} established the asymptotic result
\begin{equation}\label{Phiasymp}
P_\varphi(x)=\frac{C x}{\log x} \left\{1+O\left( \frac{1}{\log x}  \right)\right\},
\end{equation}
for some positive constant $C$.

Although the set of $\varphi$-practical numbers, say $\mathcal{A}$, is not exactly an example of 
a set $\mathcal{B}_\theta$ as described earlier, Thompson \cite{Thom} showed that 
$ \mathcal{B}_{\theta_1} \subset \mathcal{A} \subset \mathcal{B}_{\theta_2},$
where $\theta_1(n)=n+1$ and $\theta_2(n)=n+2$. $ \mathcal{B}_{\theta_1}$ is the set of even 
$\varphi$-practical numbers, while the integers in $\mathcal{B}_{\theta_2}$ are called \emph{weakly $\varphi$-practical} in \cite{Thom}.
We can use Theorem \ref{thm1} to estimate the constants $c_{\theta_1}$ and $c_{\theta_2}$ .
\begin{corollary}\label{Phicor}
If $\theta(n)=n+1$,  $c_\theta = 0.8622...$. If $\theta(n)=n+2$, $c_\theta = 1.079...$.
\end{corollary}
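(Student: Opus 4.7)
The plan is to apply Theorem \ref{thm1} directly to $\theta_1(n)=n+1$ and $\theta_2(n)=n+2$. Each of these satisfies hypothesis \eqref{theta} with $A=2$ and $a=b=0$ (since $\theta_i(n)\le 2n$), so the theorem yields
$$
c_{\theta_i}=\frac{1}{1-e^{-\gamma}}\sum_{n\in\mathcal{B}_{\theta_i}}\frac{T_i(n)}{n}\prod_{p\le\theta_i(n)}\!\left(1-\frac{1}{p}\right),\qquad T_i(n):=\sum_{p\le\theta_i(n)}\frac{\log p}{p-1}-\log n.
$$
The remaining work is then to evaluate this series with enough accuracy to certify the claimed digits of $c_{\theta_1}=0.8622\ldots$ and $c_{\theta_2}=1.079\ldots$ and, as a byproduct, to bracket the constant $C$ in \eqref{Phiasymp} via $c_{\theta_1}\le C\le c_{\theta_2}$.

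Next, I would enumerate the members of $\mathcal{B}_{\theta_i}$ up to a truncation $N$ by depth-first recursion from $n=1$: given $m\in\mathcal{B}_{\theta_i}$ with largest prime factor $q$, the children are the products $mp^\alpha$ with $p$ prime, $q<p\le\theta_i(m)$, $\alpha\ge 1$, together with the extensions $m q^\beta$ of the highest current prime before descending. For each admissible $n\le N$, both $T_i(n)$ and the product $\prod_{p\le\theta_i(n)}(1-1/p)$ are read off from precomputed cumulative tables over primes up to $N+2$, and the contribution $T_i(n)\prod_{p\le\theta_i(n)}(1-1/p)/n$ is added to a running total.

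The main obstacle is controlling the tail $\sum_{n\in\mathcal{B}_{\theta_i},\,n>N}$. By Mertens's theorems one has $T_i(n)\to K$ for a computable constant $K$ and $\prod_{p\le\theta_i(n)}(1-1/p)=e^{-\gamma}/\log n\cdot(1+o(1))$, so each term is of order $1/(n\log n)$; partial summation against $B_{\theta_i}(x)\sim c_{\theta_i}x/\log x$ from \eqref{Basymp} gives only an $O(1/\log N)$ tail, which is insufficient to certify the stated digits at any tractable $N$. I would sharpen this by writing $T_i(n)=K+R_i(n)$ with $R_i(n)=O(1/\log n)$ made explicit through an effective Mertens estimate, treating the dominant piece $K\sum_{n>N}\tfrac{1}{n}\prod_{p\le\theta_i(n)}(1-1/p)$ in closed form by Abel summation against an effective version of \eqref{Basymp} (with a bootstrapped value of $c_{\theta_i}$ plugged in), and bounding the residual $\sum R_i(n)\prod(1-1/p)/n$ by absolute convergence. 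A self-consistent choice of $N$ and bootstrap value then yields rigorous intervals around $c_{\theta_1}$ and $c_{\theta_2}$ narrow enough to produce the stated truncated decimals.
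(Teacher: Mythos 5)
Your setup (verifying hypothesis \eqref{theta} for $\theta(n)=n+1,\,n+2$, enumerating $\mathcal{B}_{\theta_i}$ recursively, and summing the series of Theorem \ref{thm1} up to a cutoff $N$) matches the paper. You also correctly diagnose the real difficulty: the terms are of size $\asymp 1/(n\log n)$, so crude partial summation against \eqref{Basymp} leaves an $O(1/\log N)$ tail that cannot certify four digits. But your proposed remedy --- Abel summation of the dominant tail piece against ``an effective version of \eqref{Basymp} with a bootstrapped value of $c_{\theta_i}$'' --- is where the argument breaks down. No explicit constants are available in the error term of \eqref{Basymp} (neither the paper nor \cite{PDD} provides them), and without them the bootstrap cannot be closed; the self-consistency argument you sketch would at best recover $c_{\theta_i}$ up to an unquantified $O(1/\log N)$, which is exactly the precision you were trying to beat.

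The idea you are missing is the identity \eqref{S1} (Theorem 1 of \cite{SPA}): $\sum_{n\in\mathcal{B}}\frac{1}{n}\prod_{p\le\theta(n)}\bigl(1-\frac{1}{p}\bigr)=1$. Because of it, the total tail \emph{weight} $\varepsilon_N=\sum_{n>N}\frac{\chi(n)}{n}\prod_{p\le\theta(n)}\bigl(1-\frac1p\bigr)=1-\sum_{n\le N}(\cdots)$ is computable \emph{exactly} from the same finite enumeration used for the partial sum --- no asymptotic information about $B(x)$ is needed at all. One then only has to sandwich the factor $\Delta(n)=\sum_{p\le\theta(n)}\frac{\log p}{p-1}-\log n$ uniformly for $n>N$, via the effective Mertens-type bound of Lemma \ref{pslem} (here $\Delta(n)=\eta(\theta(n))-\gamma+\log(\theta(n)/n)$ is within $E(N)+O(1/N)$ of $-\gamma$), giving the two-sided bound \eqref{alphaest}: $\alpha_N+L_N\varepsilon_N<\alpha<\alpha_N+R_N\varepsilon_N$. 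Since $\varepsilon_N\ll 1/\log N$ and $R_N-L_N\ll 1/\log^2 N$, the resulting interval has length $O(1/\log^3 N)$, which at $N\approx 2^{30}$ certifies the stated digits. Replace your bootstrap step with this mechanism and the rest of your computation goes through.
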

It follows that the constant $C$ in \eqref{Phiasymp} satisfies $0.8622<C<1.080$. 
Our goal is to give a formula for the exact value of $C$.
As the proof of \eqref{Phiasymp} in \cite{PTW} is more general and applies to other similar sequences,
so does Theorem \ref{thm2} below.
For simplicity, we assume $\max(2,n)\le \theta(n) \ll n$, as in \cite{PTW}. 
Let $P^+(n)$ denote the largest prime factor of $n$ and put $P^+(1)=1$.
For a given integer $m$, which we call a \emph{starter}, 
let $\mathcal{A}_m$ be the set of all integers of the form $mp_1 p_2 \ldots p_k$, $P^+(m)<p_1 < \ldots < p_k$, 
which satisfy $p_i \le \theta(mp_1\ldots p_{i-1})$ for all $1\le i \le k$. Theorem 3.1 of \cite{PTW} states that
the counting function of $\mathcal{A}_m$ satisfies
\begin{equation}\label{Bmasymp}
A_{m}(x) = \frac{c_m x}{\log x} + O\left(\frac{\log^6(2m) x}{m \log^2 x}\right) \qquad (m\ge 1, x\ge 2),
\end{equation}
for some constant $c_m$. 

Let $\mathcal{S}$ be a set 
of natural numbers (starters) with the property that $\mathcal{A}_{m_1} \cap \mathcal{A}_{m_2} = \emptyset$
for all $m_1\neq m_2 \in \mathcal{S}$, and 
$ \sum_{m \in \mathcal{S}} m^{-1}\log^6 m \ll 1.$
Let $\mathcal{A}=\bigcup_{m\in \mathcal{S}} \mathcal{A}_m$ and assume that its counting function satisfies $A(x) \ll x/\log x$. 
As in \cite{PTW}, summing \eqref{Bmasymp} over $m\in \mathcal{S}$ yields
\begin{equation}\label{Aasymp}
 A(x)=\frac{C x}{\log x}+O\left( \frac{1}{\log^2 x}  \right),
\end{equation}
where $C=\sum_{m\in \mathcal{S}} c_m.$

\begin{theorem}\label{thm2}
The constant $C$ in \eqref{Aasymp} is given by
$$
C = \frac{6/\pi^2}{1-e^{-\gamma}} \sum_{m\in \mathcal{S}} \sum_{n\in \mathcal{A}_m} \frac{1}{n}  \Biggl( \sum_{P^+(m) < p \le \theta(n)}\frac{\log p}{p+1} \ - \ 
\log\left(\frac{n}{m}\right) \Biggr) \prod_{p\le \theta(n)} \left(1+\frac{1}{p}\right)^{-1}.
$$
\end{theorem}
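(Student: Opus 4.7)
The plan is to mirror the proof of Theorem~\ref{thm1} one starter at a time and then sum over $\mathcal{S}$. Since \eqref{Aasymp} already yields $C=\sum_{m\in\mathcal{S}}c_m$, it suffices to establish, for each fixed $m\in\mathcal{S}$, the single-starter identity
\begin{equation*}
c_m=\frac{6/\pi^2}{1-e^{-\gamma}}\sum_{n\in\mathcal{A}_m}\frac{1}{n}\Biggl(\sum_{P^+(m)<p\le\theta(n)}\frac{\log p}{p+1}-\log\frac{n}{m}\Biggr)\prod_{p\le\theta(n)}\Bigl(1+\frac{1}{p}\Bigr)^{-1},
\end{equation*}
and then legitimately interchange the $m$- and $n$-summations to obtain the claimed formula for $C$.

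The derivation of this single-starter identity would follow the proof of Theorem~\ref{thm1} essentially verbatim, subject to three substitutions that reflect the squarefree-away-from-$m$ structure of $\mathcal{A}_m$ and the constraint $p>P^+(m)$ on its variable primes. First, at each variable prime the local Dirichlet factor is $1+p^{-s}$ rather than $(1-p^{-s})^{-1}$, so its logarithmic derivative at $s=1$ contributes $\log p/(p+1)$ in place of $\log p/(p-1)$ and restricts the resulting prime sum to $p>P^+(m)$. Second, the Mertens-type product $\prod_{p\le y}(1-1/p)\sim e^{-\gamma}/\log y$ gets replaced by $\prod_{p\le y}(1+1/p)^{-1}\sim(\pi^2/6)\,e^{-\gamma}/\log y$, which is precisely where the outer prefactor $6/\pi^2$ originates. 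Third, because only the ratio $n/m$ is built of variable primes, the term $\log n$ from Theorem~\ref{thm1} becomes $\log(n/m)$, so that the bracketed expression is the natural partial-range analogue of $\sum_{p\le y}\log p/(p-1)-\log y$, controlled by Mertens' theorem restricted to $p>P^+(m)$.

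Summing the single-starter identity over $\mathcal{S}$ and interchanging the order of summation is legitimate by absolute convergence: each inner sum admits a crude bound of shape $O((\log m)^{A}/m)$ coming from the same ingredients that furnished the uniform error term in \eqref{Bmasymp}, and the hypothesis $\sum_{m\in\mathcal{S}}m^{-1}\log^6 m\ll 1$ then supplies summability in $m$. The main obstacle, localised entirely in the single-starter step, is the quantitative Mertens-type cancellation needed for absolute convergence of the inner series: one must verify that the bracketed factor $\sum_{P^+(m)<p\le\theta(n)}\log p/(p+1)-\log(n/m)$ is of order $\log(\theta(n)/n)+O(1)$ uniformly in $m$ and $n$. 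Once this uniform cancellation is in hand, the remainder of the argument is a faithful transcription of the proof of Theorem~\ref{thm1}.
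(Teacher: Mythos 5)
Your proposal follows the same route as the paper: for each fixed starter $m$ one proves the Dirichlet-series identity $\frac{1}{m^s}\prod_{p\le P^+(m)}(1+p^{-s})^{-1}=\sum_{n\in\mathcal{A}_m}n^{-s}\prod_{p\le\theta(n)}(1+p^{-s})^{-1}$, differentiates it to isolate the bracketed factor with $\log p/(p^s+1)$ restricted to $p>P^+(m)$ and with $\log(n/m)$ in place of $\log n$, passes to $s\to 1^+$ along $s_N=1+1/\log^2 N$ exactly as for Theorem~\ref{thm1} to get $\alpha_m=\zeta(2)(1-e^{-\gamma})c_m$ (the $6/\pi^2$ coming from $\prod_{p\le y}(1+1/p)^{-1}\sim \zeta(2)e^{-\gamma}/\log y$), and then sums over $m\in\mathcal{S}$. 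This matches the paper's argument in structure and in every substantive detail, including the point you flag as the main remaining verification.
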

For the set of $\varphi$-practical numbers, the set of starters $\mathcal{S}$ 
will be described in Section \ref{secphipra}, while $\theta(n)=n+2$. Indeed, given $m \in \mathcal{S}$, 
the integer  $mp_1 p_2 \ldots p_k$ with $P^+(m)<p_1 < \ldots < p_k$
is $\varphi$-practical if and only if $p_i \le 2+ mp_1\ldots p_{i-1}$ for all $1\le i \le k$, by \cite[Lemmas 3.3 and 4.1]{Thom}.

\begin{corollary}\label{Phicor2}
The constant $C$ in \eqref{Phiasymp} satisfies $0.945 < C < 0.967$.
\end{corollary}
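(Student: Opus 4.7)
The plan is to evaluate the series in Theorem \ref{thm2} rigorously for $\theta(n)=n+2$, with the starter set $\mathcal{S}$ to be specified in Section \ref{secphipra}. Setting
$$
f(n,m) = \frac{1}{n}\left(\sum_{P^+(m) < p \le n+2}\frac{\log p}{p+1} \ - \ \log\frac{n}{m}\right)\prod_{p\le n+2}\left(1+\frac{1}{p}\right)^{-1},
$$
we have $C = \frac{6/\pi^2}{1-e^{-\gamma}}\sum_{m\in\mathcal{S}}\sum_{n\in\mathcal{A}_m} f(n,m)$. As for Corollaries \ref{Pcor} and \ref{Dcor}, the strategy is to truncate the double sum at an explicit threshold $N$, compute the resulting finite partial sum $C_{\le N}$ by direct enumeration, and bound the tail analytically.

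For the main term I would enumerate all $\varphi$-practical numbers $n\le N$ using the recursive criterion $p_i\le 2+mp_1\cdots p_{i-1}$, assign each to its unique starter $m\in\mathcal{S}$ so that the $\mathcal{A}_m$ partition $\mathcal{A}$, and evaluate $f(n,m)$ with certified arithmetic: the prime sum and the Mertens product are both finite and computable to arbitrary precision. For the tail I would use that, by Mertens, $\sum_{p\le y}\log p/(p+1)=\log y+c_0+O(1/\log y)$ for an explicit constant $c_0$; then the bracket in $f(n,m)$ equals $\log((n+2)/P^+(m))-\log(n/m)+O(1)=\log(m/P^+(m))+O(1)$, uniformly in $n\in\mathcal{A}_m$. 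Combined with $\prod_{p\le n+2}(1+1/p)^{-1}\ll 1/\log n$ and partial summation against the uniform estimate $A_m(x)\ll \log^6(2m)\,x/(m\log^2 x)$ from \eqref{Bmasymp}, this yields
$$
\sum_{\substack{n\in\mathcal{A}_m\\ n>N}}|f(n,m)|\ \ll\ \frac{\log^{7}(2m)}{m\log^2 N},
$$
and summing over $m\in\mathcal{S}$ using $\sum_{m\in\mathcal{S}} m^{-1}\log^6 m\ll 1$ produces a tail bound of the form $\kappa/\log^2 N$ with an explicit constant $\kappa$.

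Taking $N$ and the number of starters treated explicitly large enough that this tail bound is below, say, $0.005$, and verifying that $\frac{6/\pi^2}{1-e^{-\gamma}}\,C_{\le N}$ plus or minus the tail bound sits inside $(0.945,0.967)$, then completes the proof. The principal obstacle is making every $\ll$-constant above fully explicit and small enough to squeeze $C$ into the narrow window of width $0.022$: the $\log^{6}(2m)$ factor in \eqref{Bmasymp} combined with the slow $1/\log^2 N$ decay forces the direct enumeration to cover a substantial initial segment of $\varphi$-practical numbers, and the implicit constant in the uniform bound on $A_m(x)$ must be carried carefully through the partial summation in order to close the interval.
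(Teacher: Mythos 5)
There is a genuine gap: your tail estimate rests on the implicit constant in \eqref{Bmasymp}, i.e.\ in \cite[Thm.~3.1]{PTW}, which is not explicit and cannot be made explicit without effectively reproving that theorem in quantitative form. Since the target window for $C$ has width $0.022$ and the tail decays only like $1/\log^2 N$, you would need that constant to be both known and small, which is precisely the obstruction the paper is designed to avoid. There is also a summability mismatch in your tail bound: the extra factor $\log(m/P^+(m))\le\log m$ in the bracket turns the per-starter tail into $\ll \log^{7}(2m)/(m\log^2N)$, but the only hypothesis available is $\sum_{m\in\mathcal S}m^{-1}\log^6 m\ll 1$, so summing over $m$ does not close without new information about the starters (the paper explicitly notes that an explicit upper bound for the counting function of starters is what a direct attack would require, because $\log m - H(P^+(m))$ is unbounded).

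The paper's route is different on both sides and avoids all implicit constants. For the lower bound it inserts a free parameter $\lambda$, writes the bracket as $\{\log m - H(P^+(m))-\lambda\}+\{H(n+2)-\log n+\lambda\}$, and uses the $s=1$ case of \eqref{lemA1eq} (Lemma 3.5 of \cite{PTW}) to collapse the first piece into a single sum over starters; choosing $\lambda=1.7174$ makes every term of both resulting series positive beyond $N=2^{30}$, so the finite partial sums $U_N(\lambda)+V_N(\lambda)$ are already a rigorous lower bound and no tail estimate is needed at all. For the upper bound it abandons Theorem \ref{thm2} entirely, constructs an explicit $\theta$ with $\mathcal{A}\subset\mathcal{B}_\theta$ (verified via lemmas of \cite{Thom} and \cite{PTW}), and bounds $C\le c_\theta$ by the Theorem \ref{thm1} machinery of Section \ref{CorThm1Sec}, where the tail weight $\varepsilon_N$ is computed exactly from the identity \eqref{S1} and the bracket is bounded by the explicit Lemma \ref{pslem}. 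You would need to replace your tail argument with devices of this kind (exact tail mass from an identity, plus positivity or explicit prime-sum bounds) for the proof to go through.
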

Corollary \ref{Phicor2} is consistent with the empirical estimate $C\approx 0.96$ given in \cite[Section 6]{PTW}, which is
based on values of $P_\varphi(2^k)$ for $k\le 34$ and nonlinear regression.

\subsection{Squarefree analogues}

Let $\mathcal{D}^*_t$ denote the set of squarefree integers with $t$-dense divisors and 
let $D^*(x,t)$  be its counting function. 
Saias \cite[Theorem 1]{Saias1} showed that both $D(x,t)$ and $D^*(x,t)$ have order of magnitude $x \log t /\log x$, for $x\ge t \ge 2$.
The asymptotic estimate for $D^*(x,t)$, although not stated explicitly in the literature, is a special case of 
 \eqref{Bmasymp} (i.e. \cite[Thm. 3.1]{PTW}). With $\theta(n)=tn$ and $m=1$, we have 
\begin{equation}\label{Dasymp*}
D^*(x,t) = \frac{ c_t^*x}{\log x} \left\{1+O_t\left( \frac{1}{\log x}  \right)\right\},
\end{equation}
for some positive constant $ c_t^*$.
Theorem \ref{thm2} with $\mathcal{S}=\{1\}$ and $\mathcal{A}=\mathcal{A}_1 =\mathcal{D}^*_t$ yields
\begin{equation}\label{ct*def}
c^*_t = \frac{6/\pi^2}{1-e^{-\gamma}} \sum_{n\in \mathcal{D}^*_t} \frac{1}{n}  \Biggl( \sum_{ p \le tn}\frac{\log p}{p+1} \ - \ 
\log n \Biggr) \prod_{p\le t n} \left(1+\frac{1}{p}\right)^{-1}.
\end{equation}

\begin{corollary}\label{Dcors}
Table \ref{table3} shows values of the factor $c^*_t$ appearing in \eqref{Dasymp*}.
\end{corollary}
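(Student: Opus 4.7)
This is a numerical corollary whose proof parallels that of Corollary \ref{Dcor}. For each $t$ in Table \ref{table3}, the plan is to truncate the series \eqref{ct*def} at $n\le N$ and compute
$$
S_N(t) = \frac{6/\pi^2}{1-e^{-\gamma}} \sum_{\substack{n\in \mathcal{D}^*_t \\ n \le N}} \frac{1}{n}  \Biggl( \sum_{p \le tn}\frac{\log p}{p+1} - \log n \Biggr) \prod_{p\le t n} \left(1+\frac{1}{p}\right)^{-1}
$$
directly, after enumerating the squarefree elements of $\mathcal{D}^*_t$ up to $N$. A squarefree $n$ lies in $\mathcal{D}^*_t$ precisely when the sorted list of its divisors satisfies $d_{i+1}\le t d_i$, which is cheap to check.

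The substantive analytic step is controlling the tail $c^*_t-S_N(t)$. Mertens' theorems yield
$$
\sum_{p\le tn}\frac{\log p}{p+1} = \log(tn) - A + O\!\left(\frac{1}{\log(tn)}\right), \quad \prod_{p\le tn}\left(1+\frac{1}{p}\right)^{-1} = \frac{\pi^2 e^{-\gamma}}{6\log(tn)}\left(1+O\!\left(\frac{1}{\log(tn)}\right)\right),
$$
for an explicit constant $A$; the second relation follows from Mertens' third theorem combined with $\prod_p(1-1/p^2)=6/\pi^2$. Substituting these into each tail term and applying partial summation with the Saias bound $D^*(u,t)\ll u\log t/\log u$ from \cite{Saias1} gives $|c^*_t - S_N(t)| \ll (\log t)^2/\log N$. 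Replacing each Mertens asymptotic above with an effective version (e.g.\ Rosser--Schoenfeld or Dusart) converts the implicit constants into explicit ones, and choosing $N$ sufficiently large in terms of $t$ then validates each entry of Table \ref{table3} with the digits shown.

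The main obstacle, as with Corollary \ref{Dcor}, is computational: pushing the enumeration of $\mathcal{D}^*_t$ out to an $N$ large enough that $(\log t)^2/\log N$ drops below the precision reported in the table. This becomes increasingly demanding for the larger $t$, since $c^*_t$ grows with $t$ and the target $N$ grows correspondingly, while the density of squarefree $t$-dense integers up to $N$ is also larger. The analytic side of the argument is, by contrast, routine once one commits to the effective Mertens bounds.
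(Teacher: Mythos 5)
There is a genuine gap in your tail estimate, and it is fatal to the computational strategy. The series \eqref{ct*def} converges only logarithmically: for $n>N$ the quantity $\Delta^*(n)=\sum_{p\le tn}\frac{\log p}{p+1}-\log n$ tends to the nonzero constant $\log t-\gamma-h$, the product $\prod_{p\le tn}(1+1/p)^{-1}$ is $\asymp 1/\log(tn)$, and $D^*(u,t)\asymp u\log t/\log u$, so the tail you discard is genuinely of size $\asymp 1/\log N$ (there is no cancellation to exploit). Your own bound $|c^*_t-S_N(t)|\ll(\log t)^2/\log N$ reflects this. To certify even the four or five digits shown in Table \ref{table3} for $t=2$ you would therefore need $\log N\gtrsim 10^4$, i.e.\ $N$ beyond $e^{10^4}$ --- no enumeration of $\mathcal{D}^*_t$ can reach that. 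So ``choosing $N$ sufficiently large'' is not an available move here; truncation plus an absolute-value tail bound cannot produce the table.

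The paper's method sidesteps this by never bounding the tail through the counting function. By Lemma 3.5 of \cite{PTW}, the identity \eqref{lemA1eq} holds at $s=1$ with $m=1$ and $\theta(n)=tn$, giving
\begin{equation*}
1=\sum_{n\in\mathcal{D}^*_t}\frac{1}{n}\prod_{p\le tn}\left(1+\frac{1}{p}\right)^{-1},
\end{equation*}
so the exact total weight of the tail, $\varepsilon_N=1-\sum_{n\le N}(\cdots)$, is computable from the same finite enumeration used for the partial sum. One then writes $\Delta^*(n)=\log t-\gamma-h+\eta^*(tn)$ and uses the explicit bound $|\eta^*(x)|\le 0.084/\log^2x$ for $x\ge 2^{25}$ (Lemma \ref{pslem}, adapted to $\eta^*$ as in the paper) to sandwich every tail term between $(\log t-\gamma-h\pm E(tN))$ times its weight. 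The resulting two-sided error is $2E(tN)\varepsilon_N$, already below $10^{-5}$ for $tN\le 2^{30}$, which is what makes the table feasible. In short: the missing idea is to evaluate the tail (via the $s=1$ identity and the near-constancy of $\Delta^*$) rather than to bound it, and without that idea the analytic side is not ``routine'' --- it is the crux.
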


\begin{table}[h]
  \begin{tabular}{ | c | c | }
    \hline
    $\ t \ $ & $c^*_t$ \\ \hline
    $2$ &  0.06864.. \\ \hline
     $e$ &  0.1495... \\ \hline
    $3$ &  0.2618... \\ \hline
    $4$ & 0.4001... \\ \hline
      $5$ &  0.5898...   \\ \hline
     \end{tabular}
     \quad
     \begin{tabular}{ | c | c | }
    \hline
    $\ t \ $ & $c^*_t$ \\ \hline   
     $6$ & 0.7142... \\ \hline
    $7$ & 0.923.... \\ \hline  
    $8$ &  1.0065... \\ \hline
     $9$ &  1.0978... \\ \hline
     $10$ & 1.1868... \\ \hline
      \end{tabular}
       \quad
     \begin{tabular}{ | c | c | }
    \hline
    $\ t \ $ & $c^*_t$ \\ \hline   
     $20$ & 2.017... \\ \hline
    $40$ & 2.854... \\ \hline
    $60$ &  3.389... \\ \hline
     $80$ &  3.778... \\ \hline
     $100$ & 4.066... \\ \hline
      \end{tabular}
       \quad
     \begin{tabular}{ | c | c | }
    \hline
    $\ t \ $ & $c^*_t$ \\ \hline   
     $10^3$ & 7.208... \\ \hline
    $10^4$ & 10.390... \\ \hline
    $10^5$ &  13.580... \\ \hline
     $10^6$ &  16.771... \\ \hline
     $10^7$ & 19.963... \\ \hline
      \end{tabular}
          \medskip
     \caption{Truncated values of $c^*_t$ derived from \eqref{ct*def}.}\label{table3}
\end{table}

The squarefree analogue of Corollary \ref{Dcor2} is as follows.
\begin{corollary}\label{Dcor2*}
Let $c^*_t$  be the factor in \eqref{Dasymp*}. Define $\delta^*_t$ implicitly by
\begin{equation}\label{cteq*}
c^*_t = \frac{\log t -\gamma -h +\delta^*_t}{(1-e^{-\gamma})\pi^2/6},
\end{equation}
where
\begin{equation}\label{hdef}
h=\sum_{p\ge 2} \frac{2\log p}{p^2-1}=1.139921...
\end{equation}
We have 
$ \delta^*_t  \ll \exp(-\sqrt{\log t})$ and 
\begin{equation}\label{deltat*}
| \delta^*_t |  \le  \frac{0.084}{\log^2 t}\qquad (t \ge 2^{25}).
\end{equation}
Assuming the Riemann hypothesis, we have 
$|\delta^*_t| \le \frac{\log^2 t}{7\sqrt{t}}$ for $t\ge 55$.
\end{corollary}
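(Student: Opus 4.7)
My plan is to mirror the proof of Corollary \ref{Dcor2} step by step, replacing each Mertens-type ingredient by its squarefree counterpart ($(1+1/p)^{-1}$ in place of $(1-1/p)$). The first step is to establish
\begin{align*}
G(x) &:= \sum_{p\le x} \frac{\log p}{p+1} = \log x - \gamma - h + \rho_1(x),\\
g(x) &:= \prod_{p\le x}\Bigl(1+\tfrac{1}{p}\Bigr)^{-1} = \frac{\pi^2 e^{-\gamma}}{6\log x}\bigl(1 + \rho_2(x)\bigr),
\end{align*}
where $\rho_i(x) \ll \exp(-c\sqrt{\log x})$ unconditionally (from the Vinogradov--Korobov zero-free region) and $\rho_i(x) \ll (\log^2 x)/\sqrt{x}$ under RH (via Schoenfeld's explicit bound). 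The first identity follows from $\frac{1}{p+1} = \frac{1}{p} - \frac{1}{p(p+1)}$ together with the effective expansion $\sum_{p\le x}\frac{\log p}{p} = \log x - \gamma - \sum_p \frac{\log p}{p(p-1)} + O(\exp(-c\sqrt{\log x}))$ and the identity $\sum_p\frac{\log p}{p(p-1)} + \sum_p \frac{\log p}{p(p+1)} = \sum_p\frac{2\log p}{p^2-1} = h$. The second follows from $(1+1/p)^{-1} = (1-1/p)/(1-1/p^2)$, Mertens' third theorem, and $\prod_p(1-1/p^2) = 6/\pi^2$.

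Substituting $G(tn) - \log n = \log t - \gamma - h + \rho_1(tn)$ into \eqref{ct*def} and combining with the definition \eqref{cteq*} gives the exact decomposition
\begin{equation*}
\delta^*_t = (\log t - \gamma - h)\biggl(\sum_{n\in\mathcal{D}^*_t}\frac{g(tn)}{n} - 1\biggr) + \sum_{n\in\mathcal{D}^*_t}\frac{\rho_1(tn)\,g(tn)}{n}.
\end{equation*}
The proof of the corollary thereby reduces to bounding these two pieces. For the second sum, the total $\sum_{n\in\mathcal{D}^*_t} g(tn)/n$ is bounded uniformly in $t$ (by partial summation from \eqref{Dasymp*} together with $g(x) \ll 1/\log x$), so the sum is $\ll \sup_{u\ge t}|\rho_1(u)|$, which gives the required $\exp(-c\sqrt{\log t})$ bound.

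The bound on the first sum, $\sum_{n\in\mathcal{D}^*_t}g(tn)/n - 1$, is the heart of the matter and is the squarefree analogue of the key identity used in the proof of Corollary \ref{Dcor2}. I would apply partial summation from \eqref{Dasymp*}, replace $g(tu)$ by its asymptotic form from Step~1, and track the main terms by splitting the resulting integral at $u = t$. The principal obstacle, and the step requiring the most bookkeeping, is making this estimate numerically explicit: the constant $0.084$ in \eqref{deltat*} requires Dusart-style effective numerical versions of all the input bounds ($\rho_1$, $\rho_2$, the error in \eqref{Dasymp*}, and Mertens' theorem), plus careful tracking of the numerical constants through the partial-summation step. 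The RH bound $|\delta^*_t|\le (\log^2 t)/(7\sqrt{t})$ for $t\ge 55$ follows by the same argument with the RH-conditional inputs in place.
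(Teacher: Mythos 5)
Your decomposition of $\delta^*_t$ is algebraically the same starting point as the paper's, and your Step~1 asymptotics for $G(x)=\sum_{p\le x}\frac{\log p}{p+1}$ correctly identify $\rho_1=\eta^*$ with $\eta^*(x)=\eta(x)+\sum_{p>x}\frac{2\log p}{p^2-1}$. But you have missed the one idea that makes the corollary essentially immediate: the quantity you call ``the heart of the matter,'' namely $\sum_{n\in\mathcal{D}^*_t} g(tn)/n - 1$, is \emph{exactly zero}. This is Lemma~\ref{lemA1} with $m=1$ and $\theta(n)=tn$ evaluated at $s=1$, which the paper justifies by citing Lemma~3.5 of \cite{PTW} (it is the squarefree analogue of the identity \eqref{S1}):
\begin{equation*}
1=\sum_{n\in \mathcal{D}^*_t} \frac{1}{n} \prod_{p\le nt} \left(1+\frac{1}{p}\right)^{-1}.
\end{equation*}
With this, the first term of your decomposition vanishes identically, and the second term exhibits $\delta^*_t$ as a weighted average of $\eta^*(tn)$ with positive weights summing to $1$, so that $\inf_{n\ge1}\eta^*(nt)\le \delta^*_t\le \sup_{n\ge1}\eta^*(nt)$. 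All three bounds then follow from Lemma~\ref{pslem} adapted to $\eta^*$ (the paper checks $0<\eta^*(x)-\tilde\eta(x)<(1+\log x)/x$, the same correction already absorbed in the proof of Lemma~\ref{pslem}) together with monotonicity of $E(x)$ and of $\log^2(x)/(7\sqrt x)$.

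This is not merely a matter of elegance: your proposed substitute for the identity --- partial summation against \eqref{Dasymp*} and an asymptotic expansion of $g$ --- cannot deliver the stated conclusions. The error term in \eqref{Dasymp*} is $O_t(1/\log x)$ with an implied constant that is neither explicit nor uniform in $t$, and in particular does not decay as $t\to\infty$; so that route gives neither the numerical constant $0.084$ in \eqref{deltat*} nor the uniform bound $\delta^*_t\ll\exp(-\sqrt{\log t})$, let alone the RH-conditional bound. The ``bookkeeping'' you defer is in fact an impasse, and the exact identity is the indispensable ingredient.
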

\begin{table}[h]
     \begin{tabular}{ | c | l | }
    \hline
    $\ t \ $ & $c^*_t$ \\ \hline   
    $10^8$ & 23.15...  \\ \hline
    $10^9$ & 26.34...  \\ \hline
    $10^{10}$ & 29.53...  \\ \hline
     \end{tabular}
               \quad
     \begin{tabular}{ | c | l | }
    \hline
    $\ t \ $ & $c^*_t$ \\ \hline   
    $10^{20}$ & 61.458...  \\ \hline
    $10^{30}$ & 93.378...  \\ \hline
    $10^{40}$ & 125.2980...  \\ \hline
             \end{tabular}
               \quad
     \begin{tabular}{ | c | l | }
    \hline
    $\ t \ $ & $c^*_t$ \\ \hline   
    $10^{60}$ & 189.1372...  \\ \hline
    $10^{80}$ & 252.9764...  \\ \hline
    $10^{100}$ & 316.8156...  \\ \hline
             \end{tabular}
   \medskip
     \caption{Truncated values of $c^*_t$ derived from \eqref{cteq*} and \eqref{deltat*}.}\label{table2*}
\end{table}

We briefly mention two other squarefree analogues. The estimate \eqref{Bmasymp} and Theorem \ref{thm2}, with $\theta(n)=n+2$ and $\mathcal{S}=\{1\}$,
give the asymptotic estimate and the constant factor for the count of squarefree $\varphi$-practical numbers.
For the count of squarefree practical numbers, one would first derive \eqref{Bmasymp} 
under the condition $\theta(n)\ll n \log \log n$, which introduces an extra factor of $\log \log x $ in the error term. 
Theorem \ref{thm2} then gives the constant factor with $\theta(n)=\sigma(n)+1$ and $\mathcal{S}=\{1\}$.

\subsection{Polynomial divisors over finite fields}
Let  $\mathbb{F}_q$  be the finite field with $q$ elements.
Let $f_q(n,m)$ be the proportion of polynomials $F$ of degree $n$ over  $\mathbb{F}_q$, with the property that the set of degrees of divisors of $F$ has no gaps of size greater than $m$.  
For example, $f_q(n,1)$ is the proportion of polynomials of degree $n$ over  $\mathbb{F}_q$ which have a divisor of every degree up to $n$.
Corollary 1 of \cite{DPD} states that,
uniformly for $q\ge 2$, $n\ge m\ge 1$, we have
\begin{equation}\label{Polyasymp}
f_q(n,m)=  \frac{c_q(m) m}{n+m} \left\{ 1+O\left(\frac{1}{n}+\frac{m^2}{n^2}\right)\right\},
\end{equation}
where $ 0< c_q(m)=(1-e^{-\gamma})^{-1} + O\left(m^{-1} q^{ -(m+1)\tau }\right)$ and $\tau = 0.205466...$. 
The estimate \eqref{Polyasymp} can be viewed as the polynomial analogue of \eqref{Dasymp2}.
By adapting the proof of Theorem \ref{thm1} to polynomials over finite fields, we obtain an expression for the factor $c_q(m)$.
\begin{theorem}\label{thm3}
The factor $c_q(m)$ in \eqref{Polyasymp} is given by
$$
c_q(m) = 
\frac{1/m}{1-e^{-\gamma}}\sum_{n\ge 0} f_q(n,m)  \Biggl(\sum_{k=1}^{n+m}\frac{k I_k}{q^k-1}\ -\ n\Biggr) \prod_{k=1}^{n+m} \left(1-\frac{1}{q^k}\right)^{I_k},
$$
where $I_k$ is the number of monic irreducible polynomials of degree $k$ over  $\mathbb{F}_q$.
\end{theorem}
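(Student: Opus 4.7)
The plan is to follow the proof of Theorem \ref{thm1} step by step, now in $\mathbb{F}_q[X]$. As a preliminary combinatorial step, fix any total order on monic irreducibles that refines ordering by degree, and verify (as in \cite{DPD}) that a monic polynomial $F = P_1^{a_1}\cdots P_k^{a_k}$ has no gap greater than $m$ in its divisor-degree set iff $\deg P_i \le \deg\pts{P_1^{a_1}\cdots P_{i-1}^{a_{i-1}}} + m$ for $1\le i \le k$. Thus the polynomials counted by $f_q(\cdot,m)$ form the polynomial analog of $\mathcal{B}_\theta$, with $\theta$ now a degree shift by $m$.

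With this identification, the analytic tools used in the proof of Theorem \ref{thm1} transfer directly: the prime polynomial theorem $I_k = q^k/k + O(q^{k/2}/k)$ yields polynomial Mertens formulas
\[
\prod_{k \le K}\pts{1-\frac{1}{q^k}}^{I_k} \sim \frac{e^{-\gamma}}{K}, \qquad \sum_{k \le K} \frac{kI_k}{q^k-1} = K + \gamma + o(1),
\]
installing the dictionary $\log y \leftrightarrow K$, $\log p/(p-1) \leftrightarrow kI_k/(q^k-1)$, $(1-1/p) \leftrightarrow (1-1/q^k)^{I_k}$, and $\log n \leftrightarrow n$ in the formula of Theorem \ref{thm1}. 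The prefactor $1/(1-e^{-\gamma})$ carries over unchanged.

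The combinatorial core is transplanted via a generating function. Set $F_m(u) = \sum_{n\ge 0} q^n f_q(n,m) u^n$; the asymptotic \eqref{Polyasymp} gives
\[
F_m(u) = -c_q(m)\,m\,(qu)^{-m} \log(1-qu) + R(u),
\]
with $R(u)$ bounded near $u = 1/q$, so $c_q(m)\,m$ is the coefficient of the logarithmic singularity at $u = 1/q$. Applying the $\mathbb{F}_q[X]$-analog of the combinatorial identity underlying Theorem \ref{thm1}---essentially a largest-irreducible-factor decomposition of the admissible polynomials---together with the polynomial Mertens identities above, I compute this coefficient a second time and recover the series on the right of Theorem \ref{thm3} multiplied by $(1-e^{-\gamma})m$. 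Equating the two expressions and dividing by $(1-e^{-\gamma})m$ yields the theorem.

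The main obstacle is pinning down the precise Abel/Selberg-style identity so that the truncation at $k = n+m$ in both the product and the inner sum emerges as stated and the logarithmic-singularity extraction produces exactly the factor $(1-e^{-\gamma})^{-1}$. Once that is done, absolute convergence of the series is routine: \eqref{Polyasymp} yields $f_q(n,m) = O\pts{1/(n+m)}$, polynomial Mertens gives $\prod_{k\le n+m}(1-q^{-k})^{I_k} = O\pts{1/(n+m)}$, and $\sum_{k\le n+m} kI_k/(q^k-1) - n = O(1)$, so each summand is $O\pts{1/(n+m)^2}$, and the interchange of limits at the end of the argument is justified.
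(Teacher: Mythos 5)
Your overall framework is the right one --- the paper does indeed prove Theorem \ref{thm3} by transplanting the proof of Theorem \ref{thm1} to $\mathbb{F}_q[X]$, with a power series in $z$ replacing the Dirichlet series and the bound $I_k = q^k/k + O(q^{k/2}/k)$ replacing the prime number theorem. But your write-up stops exactly where the proof begins. The two essential ingredients are missing. First, the precise identity
\begin{equation*}
1 = \sum_{n\ge 0} f_q(n,m)\, z^n \prod_{k=1}^{n+m}\left(1-\frac{z^k}{q^k}\right)^{I_k} \qquad (|z|<1),
\end{equation*}
which is the analogue of Lemma \ref{lem1}; in the paper it follows from the decomposition of each monic polynomial into an admissible head and a cofactor all of whose irreducible factors have degree exceeding $n+m$ (Lemma 5 of \cite{DPD}), and it is this identity that forces the truncation at $k=n+m$ in both the product and, after differentiating in $z$, the inner sum. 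You acknowledge that "pinning down the precise Abel/Selberg-style identity" is the main obstacle, which is to say you have not proved the key lemma.

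Second, your mechanism for extracting $c_q(m)$ --- reading off the coefficient of the logarithmic singularity of $F_m(u)$ at $u=1/q$ and "computing this coefficient a second time" --- is not carried out, and it is not how the constant $(1-e^{-\gamma})^{-1}$ actually arises. The paper differentiates the identity above, splits the resulting vanishing sum into a head over $n\le N$ and a tail over $n>N$, and evaluates both at $z_N=\exp(-1/N^2)$. The head converges to the series $\alpha_{q,m}$ in the theorem; the tail, after inserting $f_q(n,m)\sim c_q(m)m/(n+m)$ and the Mertens-type asymptotic for the product, reduces to the integral $\int \frac{u-1+e^{-u}}{u^2e^u}\exp(I(u))\,du$ with $I(y)=\int_0^y(1-e^{-t})\,dt/t$, whose value $e^{\gamma}-1$ is obtained from the antiderivative $I'(u)\exp(I(u))$ and the formula $I(u)=\gamma+\log u+\int_u^\infty e^{-t}t^{-1}\,dt$. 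This computation is where the factor $1-e^{-\gamma}$ comes from, and nothing in your proposal substitutes for it. The convergence discussion at the end is fine but beside the point: the difficulty is not that the series converges, but that it equals $(1-e^{-\gamma})\,m\,c_q(m)$.
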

\begin{corollary}\label{Polycor}
Table \ref{table4} shows values of the factor $c_q(m)$ appearing in \eqref{Polyasymp}.
\end{corollary}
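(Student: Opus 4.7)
The plan is to compute each entry $c_q(m)$ by evaluating a sufficiently long partial sum of the series in Theorem \ref{thm3} and bounding the tail explicitly. Write the series as
$$c_q(m) = \frac{1/m}{1-e^{-\gamma}}\sum_{n\ge 0} a_q(n,m), \qquad a_q(n,m) := f_q(n,m)\Bigl(S_{n+m}-n\Bigr)\Pi_{n+m},$$
where $S_N:=\sum_{k=1}^{N} kI_k/(q^k-1)$ and $\Pi_N:=\prod_{k=1}^{N}(1-q^{-k})^{I_k}$. Fix a cutoff $N$ (depending on $q$ and $m$ and the desired precision), compute the partial sum $\sum_{n< N} a_q(n,m)$ exactly in rational arithmetic (or in high-precision floating point), and bound $\sum_{n\ge N}a_q(n,m)$.

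First I would assemble the ingredients. The irreducible counts come from $I_k=\frac{1}{k}\sum_{d\mid k}\mu(d)q^{k/d}$, so both $S_N$ and $\Pi_N$ are straightforward to compute once $N$ is chosen. For $f_q(n,m)$ I would use a combinatorial recursion: a degree-$n$ monic polynomial over $\mathbb{F}_q$ whose divisor-degree set has no gap greater than $m$ is built, as in \cite{DPD}, by appending irreducible factors whose degrees extend the attainable set by at most $m$. This yields a recurrence (analogous to the divisor recursion used in \cite{PDD} for integers) that expresses $q^n f_q(n,m)$ as a finite convolution of smaller values and powers of $q$ weighted by the $I_k$'s; iterating from $n=0$ gives all needed $f_q(n,m)$ in $O(N^2)$ arithmetic operations.

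Next I would control the tail. Mertens-type identities for $\mathbb{F}_q[X]$ give $\Pi_N \sim e^{-\gamma}/(N\log q)$ and $S_N = N\log q - \gamma + O(q^{-N/2})$ as $N\to\infty$, so $S_{n+m}-n$ grows like $m\log q$ and $\Pi_{n+m}$ decays like $1/(n+m)$. Combining with the uniform bound $f_q(n,m)\ll m/(n+m)$ from \eqref{Polyasymp} shows $a_q(n,m)\ll m^2/(n+m)^2$ with an explicit implied constant extracted by tracking the constants in the error term of \eqref{Polyasymp}. This yields an effective bound $\sum_{n\ge N}a_q(n,m)\le C(q,m)/N$ that is more than sufficient for the handful of digits displayed in Table \ref{table4}; for each specific $(q,m)$ one picks $N$ large enough that this bound falls below the last displayed digit.

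The main obstacle will be the tail estimate: the series converges only at a rate of $1/N$, so producing the tabulated precision (five or six significant digits) requires summing many thousands of terms and, more importantly, a rigorous explicit constant in the bound $f_q(n,m)\le C\,m/(n+m)$ valid uniformly in the range of interest. I would obtain this by revisiting the proof of \eqref{Polyasymp} in \cite{DPD} to extract explicit constants, or by using a sharper monotonicity argument specific to the cases $(q,m)$ appearing in Table \ref{table4}. Once the tail is pinned down, the truncated digits displayed in the table follow from direct computation.
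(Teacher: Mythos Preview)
Your overall plan---truncate the series from Theorem~\ref{thm3} and bound the tail---is correct in spirit, but you miss the device that makes the tail controllable with very little computation. The paper does not estimate $\sum_{n>N}a_q(n,m)$ termwise via $f_q(n,m)\ll m/(n+m)$ and $\Pi_{n+m}\ll 1/(n+m)$; instead it exploits the identity
\[
1=\sum_{n\ge 0} f_q(n,m)\,\Pi_{n+m}
\]
(Lemma~7 of \cite{DPD}, the polynomial analogue of \eqref{S1}). This lets one compute
\[
\varepsilon_N:=\sum_{n>N} f_q(n,m)\,\Pi_{n+m}=1-\sum_{0\le n\le N} f_q(n,m)\,\Pi_{n+m}
\]
\emph{exactly} from the same data used for the partial sum. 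Combined with the observation (via Lemma~\ref{Lq} and \eqref{In}) that $\Delta_m(n):=S_{n+m}-n$ satisfies $m-L_{n+m}\le \Delta_m(n)\le m+R_{n+m}$ with $L_{n+m},R_{n+m}=O(q^{-(n+m)/2})$, one gets two-sided bounds
\[
\alpha_N+(m-L_{N+m})\varepsilon_N\le \alpha\le \alpha_N+(m+R_{N+m})\varepsilon_N,
\]
so the uncertainty is $\varepsilon_N\cdot O(q^{-(N+m)/2})$, which decays geometrically. This is why $N\le 50$ already suffices for all entries in Table~\ref{table4}. Your route, by contrast, yields only a $1/N$ tail and forces you to extract an explicit constant in $f_q(n,m)\le Cm/(n+m)$ from \cite{DPD}---precisely the obstacle you flag, and one the paper avoids entirely.

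A minor correction: your Mertens asymptotics carry stray $\log q$ factors. One has $S_N=N+O(q^{-N/2})$ (from Lemma~\ref{Lq}) and $\Pi_N\sim e^{-\gamma}/N$, so $S_{n+m}-n\approx m$, not $m\log q$. This does not affect your final $m^2/(n+m)^2$ bound, but it is worth fixing.
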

\begin{table}[h]
  \begin{tabular}{ | c | c |c|c|c|c| }
    \hline
    $c_q(m)$ & $ m=1$ & m=2 & m=3 & m=4 & m=5 \\ \hline  
    $q=2$ & 3.400335... & 2.604818... & 2.412402... & 2.339007... & 2.310509...\\ \hline
    $q=3$ & 2.801735... & 2.388729... & 2.315222... & 2.291615... & 2.285304... \\ \hline
    $q=4$ & 2.613499... & 2.334793... & 2.295617... & 2.284202... & 2.281909... \\ \hline
    $q=5$ & 2.523222... & 2.313164... & 2.288755... & 2.282066... & 2.280999... \\ \hline
    $q=7$ & 2.436571... & 2.296082... & 2.283947... & 2.280853... & 2.280507... \\ \hline
    $q=8$ & 2.412648... & 2.292175... & 2.282950... & 2.280650... & 2.280428... \\ \hline
    $q=9$ & 2.394991... & 2.289561... & 2.282310... & 2.280534... & 2.280383... \\ \hline
     \end{tabular}
     \medskip
     
     \caption{Truncated values of $c_q(m)$.}\label{table4}
\end{table}

For example,  the proportion of polynomials of degree $n$ over  $\mathbb{F}_2$, which have a divisor of every degree up to $n$, is given by 
$$\frac{ 3.400335... }{ n} \left\{1+O\left(\frac{1}{n}\right)\right\}.$$
Theorem \ref{thm3} leads to an improvement of the asymptotic estimate for $c_q(m)$ mentioned below \eqref{Polyasymp}.
\begin{corollary}\label{Polycor2}
Uniformly for $q\ge 2$, $m\ge 1$, we have
$$ c_q(m) = \frac{1}{1-e^{-\gamma}} + O\left(\frac{1}{m^2 q^{(m+1)/2}}\right).$$
\end{corollary}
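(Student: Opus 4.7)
The plan is to substitute sharp function-field Mertens estimates into the series of Theorem~\ref{thm3} and carefully track the resulting error.

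The key technical input is the prime-polynomial theorem $k I_k = q^k + O(q^{k/2})$, a consequence of the Weil bound for curves over $\mathbb{F}_q$. Because $I_1 = q$ exactly, the error vanishes at $k=1$, which yields the refined estimates
\begin{equation*}
P_N := \prod_{k=1}^{N}(1-q^{-k})^{I_k} = \frac{e^{-\gamma}}{N}\Bigl\{1 + O\bigl(q^{-(N+1)/2}\bigr)\Bigr\},
\end{equation*}
\begin{equation*}
S_N := \sum_{k=1}^{N}\frac{k I_k}{q^k-1} = N + \kappa_q + O\bigl(q^{-(N+1)/2}\bigr),
\end{equation*}
for an explicit $q$-dependent constant $\kappa_q$. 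The improved exponent $(N+1)/2$, rather than $N/2$, is crucial and reflects precisely the absence of an error term at $k=1$.

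Setting $N = n+m$ in Theorem~\ref{thm3}, the summand becomes
\begin{equation*}
f_q(n,m)(S_{n+m}-n)P_{n+m} = f_q(n,m)\left\{\frac{(m+\kappa_q)e^{-\gamma}}{n+m} + O\!\left(\frac{1}{(n+m)\,q^{(n+m+1)/2}}\right)\right\}.
\end{equation*}
Using $f_q(n,m)\le 1$ and summing a geometric series, the error contribution is
\begin{equation*}
\frac{1}{m(1-e^{-\gamma})}\sum_{n\ge 0}\frac{1}{(n+m)\,q^{(n+m+1)/2}} \ll \frac{1}{m^2\,q^{(m+1)/2}},
\end{equation*}
which exactly matches the target. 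It then remains to show that the main-term contribution $\frac{(m+\kappa_q)e^{-\gamma}}{m(1-e^{-\gamma})}\sum_{n\ge 0} f_q(n,m)/(n+m)$ equals $1/(1-e^{-\gamma})$ up to a comparable error.

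The main obstacle is this last step: extracting the precise constant $1/(1-e^{-\gamma})$ from the series, rather than merely an $O(1)$ quantity. The bound from \cite{DPD}, namely $c_q(m) = 1/(1-e^{-\gamma}) + O(m^{-1} q^{-(m+1)\tau})$, serves as boundary data; combined with \eqref{Polyasymp} via Abel summation it gives a first-order evaluation of $\sum_{n\ge 0} f_q(n,m)/(n+m)$. Feeding this back into the main-term expression refines $c_q(m)$ itself, and a single iteration should suffice because the discrepancy between successive approximations is already governed by $q^{-(m+1)/2}$. The delicate point is to ensure that the non-$q$-decaying Euler--Maclaurin corrections in the Mertens product (which on their own would produce an error of size $1/m$) cancel against corresponding $1/m$-type corrections in the asymptotic expansion of $f_q(n,m)$; this cancellation is built into the convolution identity underlying Theorem~\ref{thm3} and should be extracted by tracking that identity in reverse.
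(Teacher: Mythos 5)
Your treatment of the error term is sound and coincides with the paper's: once the summand is written as (main term)\,$+\,O\bigl(f_q(n,m)\lambda(n+m)q^{-(n+m+1)/2}\bigr)$, summing over $n$ and dividing by $m(1-e^{-\gamma})$ gives $O\bigl(m^{-2}q^{-(m+1)/2}\bigr)$. The gap is entirely in the main term, and it is real. First, the refined Mertens estimate you posit, $P_N=\frac{e^{-\gamma}}{N}\bigl\{1+O(q^{-(N+1)/2})\bigr\}$, is false: writing $-\log P_N=\sum_{k\le N}I_kq^{-k}+\cdots$ and using $I_k=q^k/k+O(q^{k/2}/k)$, the dominant piece is the harmonic sum $\sum_{k\le N}1/k=\log N+\gamma+\tfrac1{2N}+\cdots$, whose Euler--Maclaurin tail produces a relative correction of genuine size $1/N$ that does not decay in $q$. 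You acknowledge this, but the proposed repair --- bootstrapping from \eqref{Polyasymp} and the bound $c_q(m)=1/(1-e^{-\gamma})+O(m^{-1}q^{-(m+1)\tau})$ of \cite{DPD} to evaluate $\sum_n f_q(n,m)/(n+m)$ and then ``feed back'' --- cannot reach the required precision: \eqref{Polyasymp} carries relative errors $O(1/n+m^2/n^2)$ that are independent of $q$ (and says nothing for $n<m$), so no finite iteration of that input yields an error as small as $q^{-(m+1)/2}$; moreover $\tau<1/2$, so the boundary datum is itself weaker than the target. Second, you leave $\kappa_q$ undetermined, whereas in fact $\kappa_q=0$: the zeta identity $\prod_k(1-u^k/q^k)^{-I_k}=(1-u)^{-1}$ gives the exact relation $\sum_{k\ge1}\frac{kI_k-(q^k-1)}{q^k-1}=0$ (Lemma~\ref{Lq}), whence $S_{n+m}-n=m-\sum_{k>n+m}\frac{kI_k-(q^k-1)}{q^k-1}=m+O(q^{-(n+m+1)/2})$ by the Weil bound.

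The missing idea is to avoid expanding $\lambda(n+m)=P_{n+m}$ asymptotically at all. The paper uses the exact identity
$$1=\sum_{n\ge0}f_q(n,m)\,\lambda(n+m)$$
(equation \eqref{lem7DPD}, i.e.\ Lemma~\ref{lemF1} at $z=1$), so that after inserting $\Delta_m(n)=m+O(q^{-(n+m+1)/2})$ the main term $m\sum_{n}f_q(n,m)\lambda(n+m)$ equals $m$ \emph{exactly}, with no Mertens expansion, no constant $1/(1-e^{-\gamma})$ to extract from the series, and no $1/N$ corrections to cancel. Only the remainder survives, and that is precisely the part of your argument that already works. As written, your proposal defers the hardest step to an unproved cancellation and to a bootstrap that cannot deliver the stated error, so it does not constitute a proof.
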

Combining Corollary \ref{Polycor2} with \eqref{Polyasymp}, we obtain the following improvement of \cite[Corollary 2]{DPD}.
Corollary \ref{Polycor3} is the polynomial analogue of Corollay \ref{Dasymp3}.
\begin{corollary}\label{Polycor3}
Uniformly for $q\ge 2$, $n\ge m\ge 1$, we have
\begin{equation*}
f_q(n,m)= \frac{m}{(1-e^{-\gamma})(n+m)} \left\{ 1+O\left(\frac{1}{n}+\frac{m^2}{n^2} +\frac{1}{m^2 q^{(m+1)/2}} \right)\right\}.
\end{equation*}
\end{corollary}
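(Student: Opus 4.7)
The plan is to deduce Corollary \ref{Polycor3} directly by substituting the refined estimate for $c_q(m)$ provided by Corollary \ref{Polycor2} into the asymptotic formula \eqref{Polyasymp}. No new estimates are required; the argument is a one-line multiplicative combination of two error terms, and the only thing to verify is that the errors stay uniformly controlled in the stated ranges of $q,n,m$.

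Concretely, I would first rewrite Corollary \ref{Polycor2} in multiplicative form, namely
$$
c_q(m) = \frac{1}{1-e^{-\gamma}}\left\{1 + O\!\left(\frac{1}{m^2 q^{(m+1)/2}}\right)\right\},
$$
which is legitimate because $1-e^{-\gamma}$ is a fixed positive constant and the additive $O$-term in Corollary \ref{Polycor2} is bounded, so dividing by $(1-e^{-\gamma})^{-1}$ only rescales the implied constant. Inserting this into \eqref{Polyasymp} yields
$$
f_q(n,m)= \frac{m}{(1-e^{-\gamma})(n+m)}\left\{1+O\!\left(\frac{1}{m^2 q^{(m+1)/2}}\right)\right\}\left\{1+O\!\left(\frac{1}{n}+\frac{m^2}{n^2}\right)\right\}.
$$

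The last step is to collapse the product of the two $\{1+\cdots\}$ factors into a single $\{1+\cdots\}$. Write the two error terms as $E_1$ and $E_2$; then the product equals $1+E_1+E_2+E_1E_2$. In the range $q\ge 2$, $n\ge m\ge 1$ both $E_1$ and $E_2$ are uniformly bounded (in fact $E_2 = O(1)$ because $m^2/n^2\le 1$ when $n\ge m$, and $E_1$ is bounded by a constant already for $m=1,q=2$), so $E_1E_2 = O(E_1+E_2)$ with a uniform implied constant. This gives the claimed error $O(1/n+m^2/n^2+1/(m^2 q^{(m+1)/2}))$ and hence the formula of Corollary \ref{Polycor3}.

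There is no substantive obstacle at this stage, since all the analytic work has been done in establishing \eqref{Polyasymp} and Corollary \ref{Polycor2}. The only point requiring care is the uniformity of the implied constants when combining the two multiplicative errors, but this is immediate from the observation above that both error terms are $O(1)$ uniformly in $q\ge 2$, $n\ge m\ge 1$.
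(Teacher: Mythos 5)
Your proposal is correct and matches the paper's own derivation, which simply states that Corollary \ref{Polycor3} follows by combining Corollary \ref{Polycor2} with \eqref{Polyasymp}; your multiplicative rewriting of Corollary \ref{Polycor2} and the observation that both relative error terms are uniformly $O(1)$ on the range $q\ge 2$, $n\ge m\ge 1$ supply exactly the bookkeeping the paper leaves implicit.
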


\bigskip

\section{Proof of Theorem \ref{thm1}}

Let $\chi(n)$ be the characteristic function of the set $\mathcal{B}_\theta$. 
Theorem 1 of \cite{SPA} shows that
\begin{equation}\label{S1}
 1 = \sum_{n\ge 1} \frac{\chi(n)}{n} \prod_{p\le \theta(n)} \left(1-\frac{1}{p}\right)
\end{equation}
if and only if $B(x)=o(x)$. 
Lemma \ref{lem1} extends this to an identity involving Dirichlet series for  $\re(s)>1$,
valid without any conditions on $\theta$ or $B(x)$.
\begin{lemma}\label{lem1}
For $\re(s)>1$ we have
\begin{equation}\label{lem1eq}
  1 = \sum_{n\ge 1} \frac{\chi(n)}{n^s} \prod_{p\le \theta(n)} \left(1-\frac{1}{p^s}\right).
\end{equation}
\end{lemma}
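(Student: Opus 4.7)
My proof would proceed by establishing a canonical decomposition of every positive integer with respect to $\mathcal{B}_\theta$, using it to factor $\zeta(s)$, and then dividing through. The key combinatorial fact is the following unique factorization: every $N\ge 1$ can be written uniquely as $N=bm$ with $b\in\mathcal{B}_\theta$ and every prime factor of $m$ strictly greater than $\theta(b)$ (with the convention that this condition is vacuous when $m=1$).

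To see this, write $N=p_1^{\alpha_1}\cdots p_k^{\alpha_k}$ with $p_1<\cdots<p_k$, let $j$ be the largest index $\le k$ for which $p_i\le \theta(p_1^{\alpha_1}\cdots p_{i-1}^{\alpha_{i-1}})$ holds for all $1\le i\le j$ (with $j=0$ if no such index exists), and set $b=p_1^{\alpha_1}\cdots p_j^{\alpha_j}$, $m=p_{j+1}^{\alpha_{j+1}}\cdots p_k^{\alpha_k}$. By definition $b\in\mathcal{B}_\theta$, and the maximality of $j$ forces $p_{j+1}>\theta(b)$, hence every prime factor of $m$ exceeds $\theta(b)$. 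For uniqueness, any such decomposition forces the $b$-part to coincide with this initial segment, since the primes of $b$ must be less than those of $m$ (otherwise interleaving a prime of $m$ would place a prime $\le \theta(b)$ inside $m$, contradicting the hypothesis).

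With this in hand, for $\re(s)>1$ absolute convergence of $\zeta(s)$ permits the rearrangement
\begin{equation*}
\zeta(s)=\sum_{N\ge 1}\frac{1}{N^s}=\sum_{b\in\mathcal{B}_\theta}\frac{1}{b^s}\sum_{\substack{m\ge 1\\ p\mid m\,\Rightarrow\, p>\theta(b)}}\frac{1}{m^s}=\sum_{b\in\mathcal{B}_\theta}\frac{1}{b^s}\prod_{p>\theta(b)}\left(1-\frac{1}{p^s}\right)^{-1}.
\end{equation*}
Multiplying both sides by $1/\zeta(s)=\prod_{p}(1-p^{-s})$ (absolutely convergent for $\re(s)>1$) and splitting the full product at the threshold $\theta(b)$ yields
\begin{equation*}
1=\sum_{b\in\mathcal{B}_\theta}\frac{1}{b^s}\prod_{p\le\theta(b)}\left(1-\frac{1}{p^s}\right)\prod_{p>\theta(b)}\left(1-\frac{1}{p^s}\right)\prod_{p>\theta(b)}\left(1-\frac{1}{p^s}\right)^{-1},
\end{equation*}
and the last two products cancel, giving \eqref{lem1eq}.

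\textbf{Main obstacle.} The nontrivial step is verifying the uniqueness of the decomposition $N=bm$, since the multiplicative structure of $\mathcal{B}_\theta$ depends on the order in which primes are adjoined. Once that is pinned down, the analytic manipulations are just absolutely convergent rearrangements, valid throughout $\re(s)>1$ without any further hypothesis on $\theta$ or on the growth of $B(x)$; in particular the conclusion \eqref{S1} of \cite{SPA} is recovered formally by letting $s\to 1^+$ whenever the series on the right converges to a finite limit, but the Dirichlet-series identity itself requires no such input.
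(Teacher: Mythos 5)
Your proposal is correct and follows essentially the same route as the paper: the unique decomposition $N=br$ with $b\in\mathcal{B}_\theta$ the maximal admissible initial segment and $P^-(N/b)>\theta(b)$, giving $\zeta(s)=\sum_{b\in\mathcal{B}_\theta}b^{-s}\prod_{p>\theta(b)}(1-p^{-s})^{-1}$, followed by division by $\zeta(s)$. The paper's proof is just a terser version of the same argument (and, like yours, leaves the uniqueness verification essentially as an assertion).
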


\begin{proof}
Let $P^-(n)$ denote the smallest prime factor of $n$ and put $P^-(1)=\infty$.
 Each natural number $m=p_1^{\alpha_1}p_2^{\alpha_2} \cdots p_k^{\alpha_k}$, $p_1<p_2<\ldots < p_k$, factors uniquely as $m=nr$, where 
 $n =p_1^{\alpha_1}p_2^{\alpha_2}\cdots p_j^{\alpha_j} \in \mathcal{B}_\theta$ and $P^-(r)=p_{j+1}>\theta(n)$. It follows that, for $\re(s)>1$,
 $$ \zeta(s)=\sum_{m\ge 1}\frac{1}{m^s} = 
  \sum_{n\ge 1} \frac{\chi(n)}{n^s} \prod_{p> \theta(n)} \left(1-\frac{1}{p^s}\right)^{-1}.$$
  Dividing by $\zeta(s)= \prod_{p\ge 2} \left(1-p^{-s}\right)^{-1}$ yields the result.
\end{proof}

\begin{lemma}\label{lem2}
For $\re(s)>1$ we have
\begin{equation}\label{lem2eq}
0 = \sum_{n\ge 1} \frac{\chi(n)}{n^s} \Biggl( \sum_{p\le \theta(n)}\frac{\log p}{p^s-1} - \log n\Biggr) 
\prod_{p\le \theta(n)} \left(1-\frac{1}{p^s}\right).
\end{equation}
\end{lemma}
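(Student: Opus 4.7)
The plan is to derive Lemma \ref{lem2} by differentiating the identity \eqref{lem1eq} of Lemma \ref{lem1} with respect to $s$ and justifying the interchange of differentiation and summation.

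First I would set $F_n(s) := \dfrac{\chi(n)}{n^s}\prod_{p\le \theta(n)}(1-p^{-s})$, so that \eqref{lem1eq} reads $\sum_{n\ge 1}F_n(s) = 1$ on $\{\re s > 1\}$. Each $F_n$ is holomorphic on this half-plane (the product is finite and the factor $1-p^{-s}$ does not vanish there), and a direct logarithmic-derivative computation, using $\frac{d}{ds}\log(1-p^{-s}) = \log p/(p^s-1)$, yields
$$F_n'(s) = F_n(s)\Biggl(-\log n+\sum_{p\le\theta(n)}\frac{\log p}{p^s-1}\Biggr).$$
Thus Lemma \ref{lem2} is precisely the assertion $\sum_{n\ge 1}F_n'(s) = 0$ for $\re s > 1$, which follows formally from $\frac{d}{ds}(1) = 0$ applied to the Dirichlet series identity \eqref{lem1eq}.

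The remaining task is to legitimize the termwise differentiation by a locally uniform majorization on the half-plane $\re s > 1$. Fixing $\sigma_0 > 1$ and restricting to $\re s \ge \sigma_0$, the factor $\prod_{p\le\theta(n)}|1-p^{-s}|$ is bounded by $\prod_p(1+p^{-\sigma_0}) = \zeta(\sigma_0)/\zeta(2\sigma_0)$, while the bracketed factor satisfies $\bigl|{-\log n+\sum_{p\le\theta(n)}\log p/(p^s-1)}\bigr| \le \log n + \sum_p \log p/(p^{\sigma_0}-1)$, the prime sum being finite (it is comparable to $|\zeta'(\sigma_0)/\zeta(\sigma_0)|$). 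Both bounds are independent of $n$, so $|F_n'(s)| \ll_{\sigma_0} (\log n)/n^{\sigma_0}$, and the majorizing series converges. By the standard theorem on termwise differentiation of a locally uniformly convergent series of holomorphic functions, $\sum_n F_n'(s) = \tfrac{d}{ds}\sum_n F_n(s) = \tfrac{d}{ds}(1) = 0$.

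The only step that requires any care is the domination argument in the last paragraph, but this is routine once one observes that the prime sum and the infinite product over all primes both converge for $\sigma_0 > 1$. I do not anticipate any essential obstacle.
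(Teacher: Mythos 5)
Your proposal is correct and is exactly the paper's approach: the paper's entire proof of Lemma \ref{lem2} is ``Differentiate \eqref{lem1eq} with respect to $s$,'' and you have simply supplied the routine justification (logarithmic differentiation of each term plus a locally uniform majorization on $\re s\ge\sigma_0>1$) that the paper leaves implicit. No issues.
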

\begin{proof}
Differentiate \eqref{lem1eq} with respect to $s$.
\end{proof}

While \eqref{S1} shows that \eqref{lem1eq} remains valid at $s=1$ if $B(x)=o(x)$, \eqref{lem2eq} does not hold at 
$s=1$. To see this, note that each term on the right-hand side of \eqref{lem2eq} is non-negative if $s=1$ and $\theta(n)=tn$, where
$t$ is a sufficiently large constant.
Define
$$ \alpha =  \sum_{n\ge 1} \frac{\chi(n)}{n} \Biggl( \sum_{p\le \theta(n)}\frac{\log p}{p-1} - \log n\Biggr) 
\prod_{p\le \theta(n)} \left(1-\frac{1}{p}\right),
$$
$$ F_N(s) =  \sum_{1\le n \le N} \frac{\chi(n)}{n^s} \Biggl( \sum_{p\le \theta(n)}\frac{\log p}{p^s-1} - \log n\Biggr) 
\prod_{p\le \theta(n)} \left(1-\frac{1}{p^s}\right),
$$
$$
 G_N(s) =  \sum_{ n > N} \frac{\chi(n)}{n^s} \Biggl( \log n - \sum_{p\le \theta(n)}\frac{\log p}{p^s-1}\Biggr) 
\prod_{p\le \theta(n)} \left(1-\frac{1}{p^s}\right),
$$
and let
$s_N = 1 + 1/\log^2 N$ for $N\ge 2$.
We have $F_N(s_N) = G_N(s_N)$ by Lemma \ref{lem2}, $\lim_{N\to \infty}   F_N(s_N) = \alpha$
by Lemma \ref{lem3}, and $\lim_{N\to \infty}   G_N(s_N) = (1-e^{-\gamma})c_\theta $ by Lemma \ref{lem4}.
Thus $\alpha = (1-e^{-\gamma})c_\theta$, which establishes Theorem \ref{thm1}.
It remains to prove Lemmas \ref{lem3} and \ref{lem4}, where we will assume
\begin{equation}\label{theta2}
n\le \theta(n) \ll n \log 2n (\log \log 3n)^b
\end{equation}
for some constant $b<-1$.

\begin{lemma}\label{lem3}
If $\theta$ satisfies \eqref{theta2}, $\displaystyle \lim_{N\to \infty}   F_N\left(1 + 1/\log^2 N\right) = \alpha$.
\end{lemma}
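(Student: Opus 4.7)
The plan is to write $F_N(s_N) - \alpha = [F_N(s_N) - F_N(1)] + [F_N(1) - \alpha]$ and show each bracket tends to $0$.

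For the second bracket I first establish absolute convergence of $\alpha$. Under \eqref{theta2}, Mertens' third theorem gives $\prod_{p\le\theta(n)}(1-1/p) \ll 1/\log n$, while the classical estimate $\sum_{p\le y}\log p/(p-1) = \log y - \gamma + o(1)$ applied with $y = \theta(n)$ yields
\[ \sum_{p\le\theta(n)}\frac{\log p}{p-1} - \log n \;=\; \log\bigl(\theta(n)/n\bigr) + O(1) \;\ll\; \log\log n. \]
Hence the $n$-th summand of $\alpha$ is $O\bigl(\chi(n)\log\log n/(n\log n)\bigr)$. Abel summation against $B(t) \sim c_\theta t/\log t$, together with the convergence of $\int_2^\infty \log\log t/(t\log^2 t)\,dt$, shows that $\sum_{n\in\mathcal{B}}\log\log n/(n\log n)$ converges, giving both absolute convergence of $\alpha$ and $F_N(1) \to \alpha$.

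For the first bracket I apply the mean value theorem termwise. Setting $h_n(s):=n^{-s}\prod_{p\le\theta(n)}(1-1/p^s)$, the $n$-th summand of $F_N(s)$ is $\chi(n)\,h_n'(s)$ (differentiating \eqref{lem1eq} is how Lemma \ref{lem2} was obtained). For each $n$ there is $\xi_n\in(1,s_N)$ with $\chi(n)[h_n'(s_N) - h_n'(1)] = (s_N-1)\,\chi(n)\,h_n''(\xi_n)$, and a direct logarithmic-derivative computation gives $h_n''(s) = h_n(s)\bigl[Q_n(s)^2 + Q_n'(s)\bigr]$, where $Q_n(s) := -\log n + \sum_{p\le\theta(n)}\log p/(p^s-1)$. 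Uniformly for $n\le N$ and $s\in[1,s_N]$, the estimates
\[ |Q_n(s)| \ll \log\log n, \qquad |Q_n'(s)| \le \sum_{p\le\theta(n)}\frac{4\log^2 p}{p^s} \ll \log^2 n, \qquad h_n(s) \ll \frac{1}{n\log n} \]
combine to yield $|h_n''(\xi_n)| \ll \log n/n$. Abel summation against $B$ gives $\sum_{n\le N,\,n\in\mathcal{B}}\log n/n \ll \log N$, hence
\[ \bigl|F_N(s_N) - F_N(1)\bigr| \;\le\; (s_N-1)\!\!\sum_{n\le N}\!\chi(n)\,|h_n''(\xi_n)| \;\ll\; \frac{\log N}{\log^2 N} = \frac{1}{\log N} \to 0, \]
completing the argument.

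The main delicate point is the uniform bound $h_n(s) \ll 1/(n\log n)$ for $n \le N$ and $s \in [1,s_N]$. Here one notes $n^{1-s} = 1 + O(1/\log N)$ since $(s-1)\log n \le 1/\log N$, and likewise, via the expansion $\log(1-1/p^s) - \log(1-1/p) = (s-1)\log p/(p-1) + O((s-1)^2\log^2 p/p)$, that $\prod_{p\le\theta(n)}(1-1/p^s)/(1-1/p) = \exp\bigl((s-1)\log\theta(n) + O(s-1)\bigr) = 1 + O(1/\log N)$, using $\log\theta(n) \ll \log n \le \log N$ and $s-1 \le 1/\log^2 N$. Matching care is required for the bound on $|Q_n(s)|$, which follows from $Q_n(s) = Q_n(1) + O((s-1)\log^2 n) = Q_n(1) + O(1)$ on the same range.
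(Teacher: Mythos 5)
Your proof is correct and follows essentially the same route as the paper: the same splitting $|F_N(s_N)-\alpha|\le|F_N(s_N)-F_N(1)|+|F_N(1)-\alpha|$, the same tail estimate via Mertens, $\log\theta(n)-\log n\ll\log\log n$ and $B(x)\ll x/\log x$, and the same first-order-in-$(s-1)$ control of the partial sum. The only cosmetic difference is that you package the $n$-th summand as $h_n'(s)$ and apply the mean value theorem once to get $h_n''$, whereas the paper expands the three factors $n^{-s}$, the prime sum, and the Euler product separately; the resulting bounds ($\ll\log N/\log^2 N$ for the first bracket) are identical.
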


\begin{proof}
Let $s=s_N=1+1/\log^2 N$ and write
$$ |F_N(s)-\alpha | \le |F_N(s) - F_N(1)| + |F_N(1)-\alpha| = E_1+E_2,$$
say. Since $B(x) \ll x/\log x$ and 
$\log n \le \log\theta(n) \le \log n + O(\log\log n)$,
$$ E_2 \ll   \sum_{n> N} \frac{\chi(n)}{n\log \theta(n)} \Bigl| \log\theta(n) + O(1) - \log n\Bigr|
\ll  \sum_{n\ge N} \frac{\log \log n }{n \log^2 n} \ll \frac{\log \log N}{\log N}.$$
To estimate $E_1$, note that for $n\le N$,
$$n^{-s}=n^{-1}(1+O((s-1)\log n)) = n^{-1}(1+O(1/\log N)).$$ 
Similarly, $p^s-1=(p-1)(1+O((s-1)\log p))$, so that
$$  \sum_{p\le \theta(n)}\frac{\log p}{p^s-1}=O((s-1)\log^2 n) +   \sum_{p\le \theta(n)}\frac{\log p}{p-1}. $$
By the mean value theorem, there is an $\tilde{s}$ with $1< \tilde{s}<s$ such that
\begin{multline}\label{lem3mvt}
0< \prod_{p\le \theta(n)} \left(1-\frac{1}{p^s}\right) - \prod_{p\le \theta(n)} \left(1-\frac{1}{p}\right)
=(s-1) \sum_{p\le \theta(n)}\frac{\log p}{p^{\tilde{s}}-1} \prod_{p\le \theta(n)} \left(1-\frac{1}{p^{\tilde{s}}}\right) \\
\ll (s-1)\log(\theta(n)) \prod_{p\le \theta(n)} \left(1-\frac{1}{p^s}\right) \ll (1/\log N)  \prod_{p\le \theta(n)} \left(1-\frac{1}{p^s}\right),
\end{multline}
for $n\le N$. These estimates show that 
\begin{multline*}
 F_N(s) = \sum_{1 \le n\le N} \frac{\chi(n) }{n} \left(1+O\left(\frac{1}{\log N}\right)\right)   \\
\times \Biggl( O\left(\frac{\log^2 n}{\log^2 N}\right) +\sum_{p\le \theta(n)}\frac{\log p}{p-1}   - \log n\Biggr) 
\prod_{p\le \theta(n)} \left(1-\frac{1}{p}\right).
\end{multline*}
The contribution to the last sum from each of the two error terms is $\ll 1/\log N$. 
Hence $E_1\ll 1/\log N$ and the proof of Lemma \ref{lem3} is complete.
\end{proof}

\begin{lemma}\label{lem4}
If $\theta$ satisfies \eqref{theta2}, $\displaystyle \lim_{N\to \infty}   G_N\left(1 + 1/\log^{2} N\right) = (1-e^{-\gamma})c_\theta $.
\end{lemma}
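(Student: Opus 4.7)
The plan is to evaluate $\lim_{N \to \infty} G_N(s_N)$ by directly exploiting the asymptotic $B(x) = c_\theta x/\log x \,(1 + o(1))$ from \eqref{Basymp}. Writing $\delta = 1/\log^2 N$ and $s = s_N = 1 + \delta$, the strategy is to rescale by $x = \delta \log n$, pass from the discrete sum to an integral via partial summation, and evaluate the resulting integral in closed form.

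First, using the prime number theorem combined with partial summation, one establishes uniform asymptotics (valid as $y \to \infty$ and $\delta \to 0$):
$$\sum_{p \le y}\frac{\log p}{p^s-1} = \frac{1-y^{-\delta}}{\delta} - \gamma + o(1), \qquad \prod_{p \le y}\left(1-\frac{1}{p^s}\right) = \delta\, e^{E_1(\delta \log y)}\bigl(1+o(1)\bigr),$$
where $E_1(x) = \int_x^\infty e^{-t}/t\, dt$ is the exponential integral. Both come from subtracting PNT-estimated tails from the full Dirichlet series values $-\zeta'(s)/\zeta(s) = 1/\delta - \gamma + O(\delta)$ and $1/\zeta(s) = \delta + O(\delta^2)$. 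Using $\log\theta(n) = \log n + o(\log n)$ (which holds under \eqref{theta2}) and the key algebraic identity $\log n - (1 - n^{-\delta})/\delta = h(\delta\log n)/\delta$ with $h(x) := x - 1 + e^{-x}$, the summand of $G_N(s_N)$ simplifies to
$$\frac{\chi(n)}{n}\, h(x)\, e^{E_1(x) - x}\bigl(1 + o(1)\bigr), \qquad x := \delta\log n.$$

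Second, partial summation against $B(t)$ followed by the substitution $x = \delta\log t$ converts $G_N(s_N)$ into
$$G_N(s_N) = c_\theta \int_{1/\log N}^\infty \frac{h(x)\, e^{E_1(x) - x}}{x}\, dx + o(1),$$
which tends to $c_\theta \int_0^\infty h(x) e^{E_1(x) - x}/x \, dx$ as $N \to \infty$. The remaining task is the identity $\int_0^\infty h(x)\, e^{E_1(x) - x}/x \, dx = 1 - e^{-\gamma}$. Setting $\phi(x) = e^{E_1(x)}$ gives $\phi'(x) = -\phi(x) e^{-x}/x$, so the integrand equals $-h(x)\phi'(x)$. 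Integration by parts yields
$$\int_0^A h(x)\bigl(-\phi'(x)\bigr)\, dx = -h(A)\phi(A) + \int_0^A (1-e^{-x})\phi(x)\, dx,$$
and an auxiliary computation using $\phi(x) e^{-x} = -x\phi'(x)$ together with the boundary value $\lim_{x\to 0^+} x\phi(x) = e^{-\gamma}$ gives $\int_0^A (1-e^{-x})\phi(x)\, dx = A\phi(A) - e^{-\gamma}$. Combining, $\int_0^A h(x)(-\phi'(x)) dx = \phi(A)(1 - e^{-A}) - e^{-\gamma} \to 1 - e^{-\gamma}$ as $A \to \infty$, since $\phi(A) \to 1$.

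The main obstacle is making the heuristic passage from the sum to the integral fully rigorous. The PNT-based asymptotics must be uniform over the relevant range of $y$; the discrepancy $\theta(n) \neq n$ must be absorbed into error terms using \eqref{theta2}; and the partial summation step must handle contributions from $n$ with $\delta \log n \gg 1$ (i.e.\ $n \gg \exp(\log^2 N)$), where the Mertens-type approximation for the product breaks down. For such outer $n$, the trivial bound $\prod_{p \le \theta(n)}(1-1/p^s) \le 1$, the decay $n^{-s} \le n^{-1-\delta}$, and the crude estimate $B(x) \ll x/\log x$ should suffice to show this contribution is $o(1)$.
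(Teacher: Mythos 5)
Your proposal follows essentially the same route as the paper: the same PNT estimate for $\sum_{p\le\theta(n)}\log p/(p^s-1)$, the same Mertens-type asymptotic for the Euler product (your $\delta\,e^{E_1(\delta\log y)}$ is exactly $\exp\{-\gamma+I((s-1)\log y)\}/\log y$ with $I(u)=\gamma+\log u+E_1(u)$), the same Abel-summation passage to an integral via \eqref{Basymp} and the substitution $x=(s-1)\log y$, and the same closed-form evaluation — your double integration by parts produces precisely the antiderivative $(1-e^{-x})e^{E_1(x)}=e^{-\gamma}I'(x)e^{I(x)}$ that the paper writes down directly. The outline is correct and the technical caveats you flag are handled in the paper by the explicit error terms in its two asymptotic estimates together with $e^{I(u)}\ll 1+u$ and $B(x)\ll x/\log x$.
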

\begin{proof}
Let $s=s_N=1+1/\log^2 N$ and write $I(y)=\int_0^y (1-e^{-t})\frac{dt}{t}$. Lemma 9.1 of \cite{Ten} shows that
\begin{equation}\label{lem4eq1}
\begin{split}
\prod_{p\le \theta(n)} \left(1-\frac{1}{p^s}\right) & =
\frac{ \exp\bigl\{-\gamma + I((s-1)\log \theta(n)) \bigr\}}{\log \theta(n)} \left(1+O\left(\frac{1}{\log \theta(n)}\right)\right) \\
 & = \frac{\exp\bigl\{-\gamma + I((s-1)\log n) \bigr\}}{\log n} \left(1+O\left(\frac{\log \log n}{\log n}\right)\right) ,
\end{split}
\end{equation}
by \eqref{theta2}.
By the prime number theorem,
\begin{equation}\label{lem4eq2}
\begin{split}
 \sum_{p\le \theta(n)}\frac{\log p}{p^s-1} & =\sum_{p\le  n}\frac{\log p}{p^s-1} + \sum_{n< p\le  \theta(n)}\frac{\log p}{p^s-1}\\
 & = O(1) + \frac{1-n^{1-s}}{s-1} +O\left(1+ \log ( \theta(n)/n) \right),
\end{split}
\end{equation}
for $n>N$.
The details behind the estimate for the sum over $p \le n$ are explained in \cite[Ex.1 of Sec.III.5]{Ten}. 
For the sum over $n<p \le \theta(n)$, note that the terms are $\ll \log(p) /p$. 
With these two estimates we have
\begin{multline*}
G_N(s)= \sum_{ n > N} \frac{\chi(n)}{n^s} \left( \log n -\frac{1-n^{1-s}}{s-1} +  O\left(\log \log n\right) \right)  \\
\times \frac{\exp\bigl\{-\gamma+I((s-1)\log n)\bigr\}}{\log n}  \left(1+O\left(\frac{\log \log n}{\log n}\right)\right) .
\end{multline*}
Since $e^{I(y)} \ll 1+y$ and $B(x)\ll x/\log x$, the contribution 
to the last sum from each of the two error terms is $\ll \log \log N /\log N$. 
Abel summation and the asymptotic estimate \eqref{Basymp} show that
\begin{multline*}
G_N(s)=o(1)  \\ + \int_{N}^{\infty} \frac{c_\theta}{y^s \log y} \left( \log y -\frac{1-y^{1-s}}{s-1}  \right) 
\frac{\exp\bigl\{-\gamma+ I((s-1)\log y)\bigr\}}{\log y}dy,
\end{multline*}
as $N\to \infty$.
With the change of variables $u=(s-1)\log y$, this simplifies to
$$
G_N(s) = o(1)+ e^{-\gamma}  c_\theta  \int_{1/\log N}^{\infty} \frac{u-1+e^{-u}}{u^2 e^u } \exp\bigl(I(u)\bigr) du.
$$
Note that the integrand is equal to $((I'(u))^2 +I''(u)) \exp(I(u))$, so that an antiderivative is $I'(u) \exp(I(u))$.
Thus the last integral equals
$$
\lim_{u\to \infty} I'(u) \exp(I(u)) - I'(1/\log N) \exp(I(1/\log N))=e^\gamma -1 +o(1),
$$
as $N\to \infty$, since $I(u)=\gamma + \log u + \int_u^\infty e^{-t} t^{-1} dt$ by \cite[Ex.1 of Sec.7.2.1]{MV}.
\end{proof}

\section{Proof of Theorem \ref{thm2}}

The proof of Theorem \ref{thm2} closely follows that of Theorem \ref{thm1}.
\begin{lemma}\label{lemA1}
For $m\ge 1$ and $\re(s)>1$ we have
\begin{equation}\label{lemA1eq}
 \frac{1}{m^s}\prod_{p\le P^+(m)} \left(1+\frac{1}{p^s}\right)^{-1}  
  = \sum_{n \in \mathcal{A}_m} \frac{1}{n^s} \prod_{p\le \theta(n)} \left(1+\frac{1}{p^s}\right)^{-1}.
\end{equation}
\end{lemma}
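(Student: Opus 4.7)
\textbf{Proof plan for Lemma \ref{lemA1}.}
The plan is to mirror the proof of Lemma \ref{lem1}, but with the squarefree zeta function $\zeta(s)/\zeta(2s) = \prod_{p}(1+p^{-s})$ playing the role that $\zeta(s)$ played there. The starting point is the identity
$$ \frac{1}{m^s} \prod_{p > P^+(m)} \left(1+\frac{1}{p^s}\right) = \sum_{R} \frac{1}{R^s},$$
where $R$ ranges over integers of the form $R = m q_1 q_2 \cdots q_l$ with $P^+(m) < q_1 < q_2 < \cdots < q_l$ (and $l=0$, giving $R=m$, allowed). Absolute convergence for $\re(s)>1$ is clear.

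The heart of the argument is a unique factorization of each such $R$. Writing $R = m q_1 \cdots q_l$, let $j$ be the largest index in $\{0,1,\ldots,l\}$ for which $q_i \le \theta(mq_1 \cdots q_{i-1})$ holds for every $1\le i \le j$. Set $n = m q_1 \cdots q_j$ and $r' = q_{j+1}\cdots q_l$. By construction $n\in\mathcal{A}_m$, and by the maximality of $j$ one has $P^-(r') > \theta(n)$ (trivially so when $j=l$ and $r'=1$). Conversely, any pair $(n,r')$ with $n\in\mathcal{A}_m$ and $r'$ squarefree satisfying $P^-(r')>\theta(n)$ yields a unique $R = n r'$ of the above form. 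Consequently,
$$ \frac{1}{m^s}\prod_{p > P^+(m)}\left(1+\frac{1}{p^s}\right) \;=\; \sum_{n\in \mathcal{A}_m} \frac{1}{n^s} \sum_{\substack{r' \text{ squarefree}\\ P^-(r')>\theta(n)}}\frac{1}{(r')^s} \;=\; \sum_{n\in \mathcal{A}_m} \frac{1}{n^s}\prod_{p>\theta(n)}\left(1+\frac{1}{p^s}\right).$$
Dividing both sides by the full Euler product $\prod_{p\ge 2}(1+p^{-s})=\zeta(s)/\zeta(2s)$ (nonzero for $\re(s)>1$) converts the tail products into the head products in \eqref{lemA1eq}.

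The step to watch is the bijection in the middle paragraph, in particular the fact that the choice of $j$ is forced by the increasing prime factorization of $r = q_1\cdots q_l$ and the definition of $\mathcal{A}_m$: one must confirm that the resulting $n$ lies in $\mathcal{A}_m$ and that the complementary block $r'$ is genuinely supported on primes exceeding $\theta(n)$. Both are immediate from the maximality of $j$, so there is no real obstacle beyond careful bookkeeping, entirely parallel to the decomposition $m = nr$ used in Lemma \ref{lem1}.
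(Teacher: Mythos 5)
Your proposal is correct and follows essentially the same route as the paper: the unique factorization of each squarefree-over-$m$ integer $mq_1\cdots q_l$ as $nr'$ with $n\in\mathcal{A}_m$ and $P^-(r')>\theta(n)$, followed by division by the full Euler product $\prod_{p\ge 2}(1+p^{-s})$. The paper's proof is just a terser version of the same bookkeeping.
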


\begin{proof}
 Each natural number of the form $m p_1 \ldots p_k$, $P^+(m)<p_1< \ldots < p_k$, factors uniquely as
 $nr$, where 
 $n =m p_1 p_2 \cdots p_j \in \mathcal{A}_m$ and $P^-(r)=p_{j+1}>\theta(n)$. Thus, for $\re(s)>1$,
 $$ 
 \frac{1}{m^s}\prod_{p> P^+(m)} \left(1+\frac{1}{p^s}\right)  
  = \sum_{n \in \mathcal{A}_m} \frac{1}{n^s} \prod_{p> \theta(n)} \left(1+\frac{1}{p^s}\right).
  $$
 The result follows from dividing by $\prod_{p\ge 2} \left(1+1/p^s\right)$.
\end{proof}

\begin{lemma}\label{lemA2}
For $m\ge 1$ and $\re(s)>1$ we have
\begin{equation}\label{lemA2eq}
0= \sum_{n \in \mathcal{A}_m} \frac{1}{n^s} 
  \left( \sum_{P^+(m) < p \le \theta(n) } \frac{\log p}{p^s +1} - \log\left(\frac{ n}{m}\right) \right) \prod_{p\le \theta(n)} \left(1+\frac{1}{p^s}\right)^{-1}.
\end{equation}
\end{lemma}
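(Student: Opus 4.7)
The plan is to mimic the proof of Lemma \ref{lem2}: differentiate the identity \eqref{lemA1eq} with respect to $s$, and then use \eqref{lemA1eq} once more to subtract the contribution coming from the left-hand side.

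First I would verify that term-by-term differentiation is permissible on the right-hand side of \eqref{lemA1eq}. For $\re(s)>1$ the Euler factors $(1+p^{-s})^{-1}$ are bounded uniformly in $p$, so the product $\prod_{p\le\theta(n)}(1+p^{-s})^{-1}$ is bounded on compact subsets of $\re(s)>1$; hence the series on the right of \eqref{lemA1eq} is dominated by a constant multiple of $\sum_n n^{-\sigma}$ (with $\sigma=\re(s)>1$), and the same holds for the formally differentiated series after inserting the slowly growing factor $\log n + \sum_{p\le\theta(n)}\log p/(p^s+1)\ll\log\theta(n)$. Standard arguments then let me differentiate termwise.

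Next I compute the logarithmic derivatives. Since
$$\frac{d}{ds}\Bigl[-\log(1+p^{-s})\Bigr] \;=\;\frac{\log p}{p^s+1},$$
differentiating $\prod_{p\le T}(1+p^{-s})^{-1}$ multiplies it by $\sum_{p\le T}\log p/(p^s+1)$. Applying this to each term on the right of \eqref{lemA1eq}, the term attached to $n\in\mathcal{A}_m$ picks up the factor
$$-\log n\;+\;\sum_{p\le \theta(n)}\frac{\log p}{p^s+1}.$$
Similarly, the left-hand side of \eqref{lemA1eq} acquires the factor
$$-\log m\;+\;\sum_{p\le P^+(m)}\frac{\log p}{p^s+1}.$$

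Finally I rewrite the differentiated left-hand side using \eqref{lemA1eq} itself: it equals the right-hand side of \eqref{lemA1eq} multiplied by $-\log m+\sum_{p\le P^+(m)}\log p/(p^s+1)$. Subtracting from the differentiated right-hand side and merging the two partial sums via
$$\sum_{p\le\theta(n)}\frac{\log p}{p^s+1} \;-\;\sum_{p\le P^+(m)}\frac{\log p}{p^s+1}\;=\;\sum_{P^+(m)<p\le\theta(n)}\frac{\log p}{p^s+1}$$
(which is valid because $P^+(m)<\theta(n)$ for every $n\in\mathcal{A}_m$, as $n\ge m$ and $\theta(n)\ge n$), and combining $-\log n+\log m=-\log(n/m)$, produces exactly \eqref{lemA2eq}. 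No essential obstacle arises; the only care needed is the routine justification of termwise differentiation, which is straightforward in the region $\re(s)>1$.
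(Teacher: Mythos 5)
Your proposal is correct and is essentially identical to the paper's proof: differentiate \eqref{lemA1eq} with respect to $s$, then apply \eqref{lemA1eq} again to replace the differentiated left-hand side by a sum over $n\in\mathcal{A}_m$, and subtract. The extra justification of termwise differentiation and of the merging of the prime sums (via $P^+(m)\le m\le n\le\theta(n)$) is fine but routine.
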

\begin{proof}
Differentiating \eqref{lemA1eq} with respect to $s$ shows that
\begin{multline*}
 \frac{1}{m^s} 
  \left( \sum_{p\le P^+(m) } \frac{\log p}{p^s +1} - \log m\right) \prod_{p\le P^+(m)} \left(1+\frac{1}{p^s}\right)^{-1}\\
  = \sum_{n \in \mathcal{A}_m} \frac{1}{n^s} 
  \left( \sum_{p\le \theta(n) } \frac{\log p}{p^s +1} - \log n\right) \prod_{p\le \theta(n)} \left(1+\frac{1}{p^s}\right)^{-1}.
\end{multline*}
The result now follows from Lemma \ref{lemA1}.
\end{proof}

Define
$$ \alpha_m = \sum_{n \in \mathcal{A}_m} \frac{1}{n} 
  \left( \sum_{P^+(m) < p \le \theta(n) } \frac{\log p}{p +1} -\log\left(\frac{ n}{m}\right) \right) \prod_{p\le \theta(n)} \left(1+\frac{1}{p}\right)^{-1} ,
$$
$$ F_{m,N}(s) =  \sum_{\substack{n \in \mathcal{A}_m \\n\le N }} \frac{1}{n^s} 
  \left( \sum_{P^+(m) < p \le \theta(n) } \frac{\log p}{p^s +1} - \log\left(\frac{ n}{m}\right) \right) \prod_{p\le \theta(n)} \left(1+\frac{1}{p^s}\right)^{-1},
$$
$$
 G_{m,N}(s) = \sum_{\substack{ n \in \mathcal{A}_m \\n> N}} \frac{1}{n^s} 
  \left( \log\left(\frac{ n}{m}\right) - \sum_{P^+(m) < p \le \theta(n) } \frac{\log p}{p^s +1} \right) \prod_{p\le \theta(n)} \left(1+\frac{1}{p^s}\right)^{-1},
$$
and let
$s_N = 1 + 1/\log^2 N$ for $N\ge 2$.
We have $F_{m,N}(s_N) = G_{m,N}(s_N)$ by Lemma \ref{lemA2}, $\lim_{N\to \infty}   F_{m,N}(s_N) = \alpha_m$
by Lemma \ref{lemA3}, and $\lim_{N\to \infty}   G_{m,N}(s_N) =\zeta(2) (1-e^{-\gamma})c_m $ by Lemma \ref{lemA4},
where $c_m$ is the constant in \eqref{Bmasymp}.
Thus $\alpha_m =\zeta(2) (1-e^{-\gamma})c_m$. Theorem \ref{thm2} now follows from summing over $m\in \mathcal{S}$.
The proofs of Lemmas \ref{lemA3} and \ref{lemA4} are almost identical to those of Lemmas \ref{lem3} and \ref{lem4}.

\begin{lemma}\label{lemA3}
Let $m\ge 1$ be fixed and assume $\theta$ satisfies \eqref{theta2}. Then 
 $$ \lim_{N\to \infty}   F_{m,N}\left(1 + 1/\log^2 N\right) = \alpha_m.$$
\end{lemma}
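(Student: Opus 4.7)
Set $s = s_N = 1 + 1/\log^2 N$ and split
$$ |F_{m,N}(s) - \alpha_m| \le |F_{m,N}(s) - F_{m,N}(1)| + |F_{m,N}(1) - \alpha_m| = E_1 + E_2.$$
I will show $E_1, E_2 \to 0$, following the template of Lemma \ref{lem3} but with $p-1$ replaced by $p+1$ and $\prod(1-p^{-s})$ replaced by $\prod(1+p^{-s})^{-1}$.

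For $E_2$, I would use that $\alpha_m$ is the same sum at $s=1$ as $F_{m,N}(1)$, so that $E_2$ is the tail $\sum_{n\in \mathcal{A}_m,\,n>N}$. The counting function of $\mathcal{A}_m$ satisfies $A_m(x) \ll x/\log x$ by \eqref{Bmasymp}, and by Mertens' theorem combined with \eqref{theta2} one has
$$\sum_{P^+(m)<p\le \theta(n)} \frac{\log p}{p+1} = \log\!\Bigl(\frac{n}{m}\Bigr) + O(\log\log n),$$
while $\prod_{p\le \theta(n)}(1+1/p)^{-1} \ll 1/\log n$. Hence the tail is $\ll \sum_{n>N}\chi_{\mathcal{A}_m}(n)\log\log n/(n\log n) \ll \log\log N/\log N \to 0$ by partial summation.

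For $E_1$, I would compare the three factors in $F_{m,N}(s)$ to those in $F_{m,N}(1)$ term by term, uniformly for $n \le N$. First, $n^{-s} = n^{-1}(1+O((s-1)\log n)) = n^{-1}(1+O(1/\log N))$. Second, $p^s + 1 = (p+1)(1+O((s-1)\log p))$, so
$$\sum_{P^+(m)<p\le \theta(n)}\frac{\log p}{p^s+1} \ = \ \sum_{P^+(m)<p\le \theta(n)}\frac{\log p}{p+1} + O\!\Bigl(\frac{\log^2 n}{\log^2 N}\Bigr).$$
Third, by the mean value theorem applied to $s \mapsto \prod_{p\le \theta(n)}(1+p^{-s})^{-1}$ (as in \eqref{lem3mvt}), there exists $\tilde s \in (1,s)$ with
$$ \prod_{p\le \theta(n)}\!\Bigl(1+\frac{1}{p^s}\Bigr)^{-1} - \prod_{p\le \theta(n)}\!\Bigl(1+\frac{1}{p}\Bigr)^{-1} = (s-1)\sum_{p\le \theta(n)}\frac{\log p}{p^{\tilde s}+1}\prod_{p\le \theta(n)}\!\Bigl(1+\frac{1}{p^{\tilde s}}\Bigr)^{-1},$$
which is $\ll (s-1)\log \theta(n) \prod_{p\le \theta(n)}(1+p^{-s})^{-1} \ll (1/\log N)\prod_{p\le \theta(n)}(1+p^{-s})^{-1}$ by \eqref{theta2}. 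Substituting these three estimates into $F_{m,N}(s)$ and expanding, the cross terms contribute $\ll 1/\log N$ once one uses again that $\sum_{n\le N}\chi_{\mathcal{A}_m}(n)/(n\log n) \ll 1$. Hence $E_1 \ll 1/\log N$, completing the proof.

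The only step I expect to require any care is the absolute convergence of the sum $\alpha_m$ itself (ensuring $E_2$ makes sense), which is handled by the same $\log\log n/(n\log n)$ bound above together with \eqref{theta2}; everything else is a routine transcription from the proof of Lemma \ref{lem3} with the substitutions $p-1 \rightsquigarrow p+1$ and $(1-p^{-s}) \rightsquigarrow (1+p^{-s})^{-1}$.
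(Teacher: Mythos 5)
Your proposal is correct and is essentially the paper's intended argument: the paper itself gives no separate proof of Lemma~\ref{lemA3}, stating only that it is ``almost identical'' to that of Lemma~\ref{lem3}, and your transcription (with $p-1\rightsquigarrow p+1$, $\prod(1-p^{-s})\rightsquigarrow\prod(1+p^{-s})^{-1}$, $A_m(x)\ll x/\log x$ from \eqref{Bmasymp}, and the mean value theorem applied to $s\mapsto\prod_{p\le\theta(n)}(1+p^{-s})^{-1}$) carries out exactly the adaptation the author has in mind.
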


\begin{lemma}\label{lemA4}
Let $m\ge 1$ be fixed and assume $\theta$ satisfies \eqref{theta2}. Then 
$$\displaystyle \lim_{N\to \infty}   G_{m,N}\left(1 + 1/\log^{2} N\right) = \zeta(2) (1-e^{-\gamma})c_m .$$
\end{lemma}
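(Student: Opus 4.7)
The plan is to mimic the proof of Lemma \ref{lem4} essentially verbatim, keeping careful track of the two differences from that setting: the product $\prod(1+1/p^s)^{-1}$ replaces $\prod(1-1/p^s)$, and the bracket contains $\log(n/m)$ in place of $\log n$. The only genuinely new ingredient is an extra factor of $\zeta(2)$, which will appear in the analogue of \eqref{lem4eq1}.

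First I would derive the analogue of \eqref{lem4eq1}. Using the identity
$$\left(1+\frac{1}{p^s}\right)^{-1}=\frac{1-1/p^s}{1-1/p^{2s}},$$
one finds $\prod_{p\le y}(1+1/p^s)^{-1}=\zeta(2s)\prod_{p\le y}(1-1/p^s)\bigl(1+O(y^{1-2\re(s)})\bigr)$; combining this with \cite[Lem.\ 9.1]{Ten} and \eqref{theta2} yields
$$\prod_{p\le \theta(n)}\left(1+\frac{1}{p^s}\right)^{-1}=\zeta(2)\cdot\frac{\exp\bigl\{-\gamma+I((s-1)\log n)\bigr\}}{\log n}\left(1+O\left(\frac{\log\log n}{\log n}\right)\right),$$
since $\zeta(2s_N)\to\zeta(2)$. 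For the prime sum I would write $1/(p^s+1)=1/p^s-1/(p^s(p^s+1))$, note that $\sum_p \log p/(p^s(p^s+1))=O(1)$ for $s$ near $1$, and observe that the restriction $p>P^+(m)$ removes only $O_m(1)$ terms because $m$ is fixed. The argument behind \eqref{lem4eq2} therefore gives
$$\sum_{P^+(m)<p\le \theta(n)}\frac{\log p}{p^s+1}=\frac{1-n^{1-s}}{s-1}+O\bigl(1+\log(\theta(n)/n)\bigr)\qquad (n>N).$$

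Inserting these two estimates into $G_{m,N}(s)$ and absorbing $\log(n/m)=\log n+O_m(1)$, the error contributions are controlled just as in Lemma \ref{lem4}, using $A_m(x)\ll x/\log x$ (from \eqref{Bmasymp}) and $e^{I(u)}\ll 1+u$; each is $\ll \log\log N/\log N=o(1)$. Abel summation against \eqref{Bmasymp}, followed by the substitution $u=(s-1)\log y$, then transforms the main term into
$$G_{m,N}(s)=o(1)+\zeta(2)\,c_m\, e^{-\gamma}\int_{1/\log N}^{\infty}\frac{u-1+e^{-u}}{u^2 e^u}\,e^{I(u)}\,du.$$

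The concluding step is identical to Lemma \ref{lem4}: the integrand equals $((I'(u))^2+I''(u))e^{I(u)}$, whose antiderivative is $I'(u)e^{I(u)}$, so the integral tends to $e^{\gamma}-1$ as $N\to\infty$. Hence $\lim_{N\to\infty} G_{m,N}(s_N)=\zeta(2)\,c_m e^{-\gamma}(e^{\gamma}-1)=\zeta(2)(1-e^{-\gamma})c_m$, as required. I expect no real obstacle beyond routine bookkeeping; the one place deserving mild care is ensuring that the $O(1+\log(\theta(n)/n))$ error in the prime sum and the $\log(n/m)$ term both integrate against $\zeta(2)e^{-\gamma+I}/\log^2 n$ (after the prefactor $1/n^s$ and weighting by \eqref{Bmasymp}) to contribute $o(1)$, but since $m$ is fixed and $\log\theta(n)=\log n+O(\log\log n)$ by \eqref{theta2}, the estimates in Lemma \ref{lem4} carry over without modification.
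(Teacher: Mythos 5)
Your proposal is correct and follows exactly the route the paper intends: the paper omits the proof of Lemma \ref{lemA4}, stating only that it is almost identical to that of Lemma \ref{lem4}, and you have carried out precisely that adaptation, correctly identifying the one new ingredient (the factor $\zeta(2s)\to\zeta(2)$ from rewriting $\prod(1+1/p^s)^{-1}$ as $\zeta(2s)\prod(1-1/p^s)(1+O(y^{1-2\re s}))$) and correctly absorbing the harmless $O_m(1)$ shifts from $\log(n/m)$ and the restriction $p>P^+(m)$.
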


\section{Proof of Theorem \ref{thm3}}

The proof of Theorem \ref{thm3} is analogous to that of Theorem \ref{thm1},
with power series replacing Dirichlet series.
\begin{lemma}\label{lemF1}
For $m\ge 1$ and $|z|<1$ we have
\begin{equation}\label{lemF1eq}
 1  = \sum_{n \ge 0 } f_q(n,m) \, z^n \prod_{k=1}^{n+m} \left(1-\frac{z^k}{q^k}\right)^{I_k}.
\end{equation}
\end{lemma}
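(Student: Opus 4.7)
The plan is to mirror the proof of Lemma 1, with Dirichlet series replaced by generating functions in $z$ and the Riemann zeta function replaced by the polynomial zeta function $Z(w) = \sum_{F\,\text{monic}} w^{\deg F} = (1-qw)^{-1} = \prod_{k\ge 1}(1-w^k)^{-I_k}$. Let $\chi_m(F)$ denote the indicator that a monic polynomial $F$ over $\mathbb{F}_q$ has no gaps greater than $m$ in its set of divisor degrees, so that the number of monic $F$ of degree $n$ with $\chi_m(F)=1$ equals $q^n f_q(n,m)$.

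The heart of the argument is a unique factorization: every monic polynomial $F$ over $\mathbb{F}_q$ factors uniquely as $F = GH$ with $\chi_m(G)=1$ and every irreducible factor of $H$ of degree greater than $\deg G + m$. To exhibit it, list the irreducible factors of $F$ (with multiplicity) as $P_1,\ldots,P_r$ in non-decreasing order of degree, set $D_j = \deg(P_1\cdots P_j)$, and let $j^*$ be the largest index with $\deg P_i \le D_{i-1}+m$ for all $i\le j^*$. Put $G = P_1\cdots P_{j^*}$ and $H = P_{j^*+1}\cdots P_r$. A short induction shows that $\chi_m(P_1\cdots P_j)=1$ holds precisely when $\deg P_i \le D_{i-1}+m$ for all $i\le j$: the divisor-degree set of $P_1\cdots P_j$ is the union of the previous one with its translate by $\deg P_j$, so $\deg P_j - D_{j-1}$ is exactly the size of the largest potentially new gap. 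This establishes $\chi_m(G)=1$, and maximality of $j^*$ gives $\deg P_{j^*+1} > \deg G + m$, so the degree bound on $H$ holds. For uniqueness, any valid $G'$ must capture every $P_i$ of degree $\le \deg G' + m$ (otherwise $H'$ would contain too small a factor); writing $G' = P_1\cdots P_j$, the condition $\chi_m(G')=1$ forces $j\le j^*$, while the constraint on $H'$ forces $\deg P_{j+1} > D_j + m$, hence $j\ge j^*$; thus $G' = G$.

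With the factorization in hand, I would compute $\sum_{F\,\text{monic}}(z/q)^{\deg F}$ two ways: directly it equals $\sum_{n\ge 0}q^n(z/q)^n = (1-z)^{-1}$, while grouping by $G$ and summing the $H$-contribution via the Euler product for polynomials whose irreducible factors have degree exceeding $\deg G + m$ gives
$$\sum_F (z/q)^{\deg F} = \sum_{G\,:\,\chi_m(G)=1} (z/q)^{\deg G} \prod_{k > \deg G + m}\bigl(1 - (z/q)^k\bigr)^{-I_k}.$$
Multiplying both sides by $\prod_{k\ge 1}(1-(z/q)^k)^{I_k} = Z(z/q)^{-1} = 1-z$ and collecting by $\deg G = n$ (so the contribution from monic $G$ of degree $n$ with $\chi_m(G)=1$ sums to $q^n f_q(n,m)(z/q)^n = f_q(n,m)z^n$) produces the identity. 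Absolute convergence of all sums and products for $|z|<1$ follows from $|z/q|<1$ together with the bound $\sum_k I_k (|z|/q)^k \le -\log(1-|z|)$.

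The main obstacle is the unique factorization, specifically the uniqueness argument, which requires carefully reconciling the gap condition on $G$ with the degree bound on $H$. Once this is in hand, the identity is a formal consequence paralleling the derivation of \eqref{lem1eq}.
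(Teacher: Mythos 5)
Your argument is correct and is essentially the paper's: both rest on the unique factorization $F=GH$ with $G$ the maximal ``dense'' initial part and all irreducible factors of $H$ of degree exceeding $\deg G+m$, followed by the Euler-product identity $\prod_{k\ge 1}(1-(z/q)^k)^{-I_k}=(1-z)^{-1}$ and multiplication by $1-z$. The only difference is that the paper imports the combinatorial decomposition wholesale from Lemma 5 of \cite{DPD} (phrased via the counting function $r_q$), whereas you prove it from scratch; your self-contained version is fine.
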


\begin{proof}
Lemma 5 of \cite{DPD} implies that 
$$ z^j= \sum_{n=0}^j f_q(n,m) z^n \, r_q(j-n,n+m) z^{j-n}\qquad (j\ge 0, m\ge 0),$$
where $r_q(n,m)$ denotes the proportion of polynomials of degree $n$ over $\mathbb{F}_q$,
all of whose non-constant divisors have degree $>m$.
Summing over $j\ge 0$ yields
$$
\frac{1}{1-z} = \sum_{n\ge 0} f_q(n,m) z^n \sum_{j\ge n} r_q(j-n,n+m) z^{j-n},
$$
for $|z|<1$.
The inner sum equals
$$ \sum_{j\ge 0} r_q(j,n+m) z^{j} = \prod_{k>n+m}\left(1-\frac{z^k}{q^k}\right)^{-I_k}
=\frac{1}{1-z}\prod_{k=1}^{n+m}\left(1-\frac{z^k}{q^k}\right)^{I_k}.
$$
The result now follows from multiplying by $(1-z)$.
\end{proof}

\begin{lemma}\label{lemF2}
For $m\ge 1$ and $|z|<1$ we have
\begin{equation}\label{lemF2eq}
0=  \sum_{n \ge 0 } f_q(n,m) \, z^{n-1} \left(n- \sum_{k=1}^{n+m} \frac{k I_k}{(q/z)^k -1} \right)
\prod_{k=1}^{n+m} \left(1-\frac{z^k}{q^k}\right)^{I_k}.
\end{equation}
\end{lemma}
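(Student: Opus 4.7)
The plan is to prove \eqref{lemF2eq} by simply differentiating \eqref{lemF1eq} with respect to $z$, in direct analogy with how Lemma \ref{lem2} was derived from Lemma \ref{lem1}. Each summand $f_q(n,m) z^n \prod_{k=1}^{n+m}(1-z^k/q^k)^{I_k}$ is a polynomial in $z$, and since $|f_q(n,m)|\le 1$ and the products are uniformly bounded, the series converges absolutely and uniformly on compact subsets of $|z|<1$, so term-by-term differentiation is justified. The derivative of the constant left-hand side is $0$.

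For each term, I would apply the product rule. The factor $z^n$ contributes $n z^{n-1}$. The logarithmic derivative of the product is
$$\frac{d}{dz}\sum_{k=1}^{n+m} I_k \log\!\left(1-\frac{z^k}{q^k}\right) = -\sum_{k=1}^{n+m}\frac{k I_k z^{k-1}}{q^k-z^k}.$$
Combining these via the product rule and pulling out $z^{n-1}$, the derivative of the $n$-th summand equals
$$f_q(n,m)\, z^{n-1}\left[n - \sum_{k=1}^{n+m}\frac{k I_k z^k}{q^k-z^k}\right]\prod_{k=1}^{n+m}\left(1-\frac{z^k}{q^k}\right)^{I_k}.$$
Rewriting $\frac{z^k}{q^k-z^k}=\frac{1}{(q/z)^k-1}$ gives exactly the summand in \eqref{lemF2eq}, and summing over $n\ge 0$ yields the identity. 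The apparent $z^{-1}$ in the $n=0$ term is harmless: the bracketed expression contributes a factor $z^k$ with $k\ge 1$, so the term is analytic at $z=0$. There is essentially no obstacle here; the only thing to check carefully is the justification for term-by-term differentiation, which is immediate from the absolute and locally uniform convergence on $|z|<1$.
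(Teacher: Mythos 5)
Your proposal is correct and matches the paper's proof, which is exactly ``Differentiate \eqref{lemF1eq} with respect to $z$''; your product-rule computation and the rewriting $\frac{z^k}{q^k-z^k}=\frac{1}{(q/z)^k-1}$ are what the paper leaves implicit. The extra remarks on locally uniform convergence and the harmlessness of the $n=0$ term are fine but not needed beyond what the paper states.
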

\begin{proof}
Differentiate \eqref{lemF1eq} with respect to $z$.
\end{proof}

Define
$$ \alpha_{q,m} = \sum_{n \ge 0 } f_q(n,m)\left( \sum_{k=1}^{n+m} \frac{k I_k}{q^k -1} \ -n\right)
\prod_{k=1}^{n+m} \left(1-\frac{1}{q^k}\right)^{I_k},
$$
$$ F_{q,m,N}(z) =  \sum_{n=0}^N f_q(n,m) \, z^{n-1} \left( \sum_{k=1}^{n+m} \frac{k I_k}{(q/z)^k -1} \ -n \right)
\prod_{k=1}^{n+m} \left(1-\frac{z^k}{q^k}\right)^{I_k},
$$
$$
 G_{q,m,N}(z) =   \sum_{n>N} f_q(n,m) \, z^{n-1} \left(n - \sum_{k=1}^{n+m} \frac{k I_k}{(q/z)^k -1}  \right)
\prod_{k=1}^{n+m} \left(1-\frac{z^k}{q^k}\right)^{I_k},
$$
and let
$z_N = \exp(-1/N^2)$ for $N\ge 1$.
We have $F_{q,m,N}(z_N) = G_{q,m,N}(z_N)$ by Lemma \ref{lemF2}, $\lim_{N\to \infty}   F_{q,m,N}(z_N) = \alpha_{q,m}$
by Lemma \ref{lemF3}, and $\lim_{N\to \infty}   G_{q,m,N}(z_N) =(1-e^{-\gamma})m c_q(m) $ by Lemma \ref{lemF4},
where $c_q(m)$ is the constant in \eqref{Polyasymp}.
Thus $\alpha_{q,m} =(1-e^{-\gamma}) m c_q(m)$, which is what we need to show.

\begin{lemma}\label{lemF3}
For $m\ge 1$ we have
 $$ \lim_{N\to \infty}   F_{q,m,N}\left(\exp(- 1/N^2)\right) = \alpha_{q,m}.$$
\end{lemma}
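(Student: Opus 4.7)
The plan is to mirror the proof of Lemma \ref{lem3} in the power-series setting. Write $z_N=\exp(-1/N^2)$ and split
\[
|F_{q,m,N}(z_N)-\alpha_{q,m}|\le |F_{q,m,N}(z_N)-F_{q,m,N}(1)| + |F_{q,m,N}(1)-\alpha_{q,m}| = E_1 + E_2,
\]
so that $E_2$ is the $n>N$ tail of the series defining $\alpha_{q,m}$ and $E_1$ is the perturbation caused by replacing $z=1$ by $z=z_N$.

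For $E_2$, the three inputs I would use are the bound $f_q(n,m)\ll m/(n+m)$ implicit in \eqref{Polyasymp}; the Mertens-type identity $\prod_{k=1}^{M}(1-q^{-k})^{I_k}\sim e^{-\gamma}/M$ in $\mathbb{F}_q[X]$, which gives $\prod_{k=1}^{n+m}(1-1/q^k)^{I_k}\ll 1/(n+m)$; and the prime number theorem $kI_k=q^k+O(q^{k/2})$, which yields $\sum_{k=1}^{n+m}kI_k/(q^k-1)-n = m+O(1)$. Together these bound the $n$-th summand of $\alpha_{q,m}$ by $O(m^2/(n+m)^2)$, hence $E_2\ll m^2/N\to 0$ for fixed $m$ (and in particular the series defining $\alpha_{q,m}$ converges absolutely).

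For $E_1$, write the $n$-th summand of $F_{q,m,N}(z)$ as $A_n(z)B_n(z)C_n(z)$ with $A_n(z)=z^{n-1}$, $B_n(z)=\sum_{k=1}^{n+m}kI_k/((q/z)^k-1)-n$, and $C_n(z)=\prod_{k=1}^{n+m}(1-z^k/q^k)^{I_k}$. For $0\le n\le N$ and $k\le n+m$, a first-order expansion around $z=1$ using $|1-z_N|\ll 1/N^2$ and $|1-z_N^k|\ll k/N^2$ produces the componentwise bounds $|A_n(z_N)-A_n(1)|\ll (n+1)/N^2$, $|\Delta B_n|\ll (n+m)^2/N^2$ (from $1/((q/z_N)^k-1)-1/(q^k-1)\ll k/(q^k N^2)$, multiplied by $kI_k\ll q^k$ and summed), and $|\Delta C_n|\ll C_n(1)\,(n+m)/N^2$ (from $I_k(1-z_N^k)/(q^k-1)\ll 1/N^2$). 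Inserting these together with $|B_n(1)|\ll m$, $C_n(1)\ll 1/(n+m)$, $f_q(n,m)\ll m/(n+m)$ into the triangle-inequality bound
\[
|\Delta(A_nB_nC_n)|\le |\Delta A_n|\,|B_n|\,|C_n|+|A_n|\,|\Delta B_n|\,|C_n|+|A_n|\,|B_n|\,|\Delta C_n| + \text{(h.o.t.)},
\]
each of the three error contributions sums over $0\le n\le N$ to $o(1)$ as $N\to\infty$, with the dominant piece, coming from $\Delta B_n$, being $O(m/N)$.

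The main technical obstacle is the middle-factor perturbation $\Delta B_n$: its size $O((n+m)^2/N^2)$ is only $O(1)$ at $n=N$ and therefore is not pointwise small. What rescues the argument is that the companion weight $f_q(n,m)\,C_n(1)\ll m/(n+m)^2$ is correspondingly small, so the summation against this weight collapses the total to $O(m/N)$. Once this balance is made precise, combining $E_1\to 0$ and $E_2\to 0$ finishes the proof of Lemma \ref{lemF3}.
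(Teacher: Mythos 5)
Your proof is correct and takes essentially the same route the paper sketches: the $E_1+E_2$ decomposition borrowed from Lemma \ref{lem3}, with \eqref{In} playing the role of the prime number theorem and your product-perturbation bound matching the paper's stated analogue of \eqref{lem3mvt}. The paper only outlines this argument, and your componentwise estimates -- including the observation that the $O((n+m)^2/N^2)$ perturbation of the middle factor is absorbed by the weight $f_q(n,m)\,C_n(1)\ll m/(n+m)^2$ -- fill in the details consistently with it.
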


\begin{lemma}\label{lemF4}
For $m\ge 1$ we have
$$\displaystyle \lim_{N\to \infty}   G_{q,m,N}\left(\exp(- 1/N^2)\right) = (1-e^{-\gamma}) m\, c_q(m) .$$
\end{lemma}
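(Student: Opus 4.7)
The plan is to mirror the proof of Lemma \ref{lem4} with Dirichlet series replaced by power series and Euler products over primes replaced by products over monic irreducible polynomials in $\mathbb{F}_q[T]$. Set $z = z_N = \exp(-1/N^2)$ and $\tau := -\log z = 1/N^2$ throughout.

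The heart of the argument is a polynomial analogue of Lemma 9.1 of \cite{Ten}: I would first prove that for $M \to \infty$ with $\tau \to 0^+$,
$$
\prod_{k=1}^{M}\left(1 - \frac{z^k}{q^k}\right)^{I_k} = \frac{\exp\{-\gamma + I(M\tau)\}}{M}\left(1+O\left(\frac{1}{M}\right)\right),
$$
where $I(y) = \int_0^y (1-e^{-t})t^{-1}\,dt$. Taking logarithms and using the prime polynomial theorem in the form $kI_k = q^k + O(q^{k/2})$, the $j=1$ layer of $-\sum_k I_k \sum_{j\ge 1}(z/q)^{kj}/j$ contributes $-\sum_{k\le M} e^{-k\tau}/k = -\log M - \gamma + I(M\tau) + O(1/M)$, while the $j \ge 2$ layers and the error from $kI_k - q^k$ together give an absolutely convergent residual constant. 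That constant is forced to be zero by letting $M\to\infty$ in the closed-form identity $\prod_{k\ge 1}(1 - z^k/q^k)^{I_k} = 1 - z$ (a rewriting of $\prod_P (1-q^{-s\deg P}) = 1 - q^{1-s}$) and using $I(u) = \log u + \gamma + O(e^{-u})$ as $u \to \infty$.

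Next, I would estimate the interior sum. Writing $(q/z)^k - 1 = q^k e^{k/N^2}(1 - q^{-k}e^{-k/N^2})$ and using $kI_k = q^k + O(q^{k/2})$ gives $kI_k/((q/z)^k - 1) = e^{-k/N^2} + O(q^{-k/2})$, whence
$$
\sum_{k=1}^{n+m}\frac{kI_k}{(q/z)^k - 1} = N^2\bigl(1 - e^{-(n+m)/N^2}\bigr) + O(1).
$$
Setting $u := (n+m)/N^2$, the factor in parentheses in the definition of $G_{q,m,N}$ becomes $N^2(u - 1 + e^{-u}) + O(1)$. Substituting this, the product estimate above, $z^{n-1} = e^{-(n-1)/N^2}$, and $f_q(n,m) = c_q(m)m/(n+m) \cdot (1+O(1/n))$ from \eqref{Polyasymp}, a routine check using $e^{I(u)} \ll 1 + u$ and $f_q(n,m)\ll 1/n$ shows that the aggregate error contribution is $o(1)$. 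Abel summation against the smoothly varying factors, followed by the change of variables $u = (n+m)/N^2$, then yields
$$
G_{q,m,N}(z_N) = o(1) + c_q(m)\,m\,e^{-\gamma}\int_{1/N}^{\infty}\frac{u - 1 + e^{-u}}{u^2 e^u}\exp(I(u))\,du.
$$
As in the proof of Lemma \ref{lem4}, the integrand equals $\bigl((I'(u))^2 + I''(u)\bigr)\exp(I(u))$, so $I'(u)\exp(I(u))$ is an antiderivative; since $I'(u)\exp(I(u)) \to 1$ as $u\to 0^+$ and $\to e^{\gamma}$ as $u\to\infty$, the integral tends to $e^{\gamma} - 1$, and the claimed limit $(1-e^{-\gamma})m\,c_q(m)$ follows.

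The main obstacle is the Mertens-type product asymptotic. The leading $-\log M - \gamma$ behaviour is easy from the prime polynomial theorem, but isolating the residual constant contributed by the $j \ge 2$ layers and by $kI_k - q^k$, and verifying that it vanishes, so that the limit lands on $e^{-\gamma}$ rather than on some $q$-dependent multiple of it, requires the closed-form identity $\prod_{k\ge 1}(1-z^k/q^k)^{I_k} = 1 - z$ together with the refined asymptotic for $I(u)$. Once that step is secured, everything else is a faithful term-by-term translation of the Dirichlet-series bookkeeping of Lemma \ref{lem4}.
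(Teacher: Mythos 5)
Your proposal is correct and follows essentially the same route as the paper, which proves Lemma \ref{lemF4} by transcribing the proof of Lemma \ref{lem4} with \eqref{In} in place of the prime number theorem, using exactly the two key estimates you derive (the Mertens-type product asymptotic with constant $e^{-\gamma}$ and the geometric-sum approximation $\sum_{k\le n+m} kI_k/((q/z)^k-1)=(1-z^{n+m})/(1-z)+O(1)$), followed by the same $I'(u)\exp(I(u))$ antiderivative evaluation. Your use of the identity $\prod_{k\ge 1}(1-z^k/q^k)^{I_k}=1-z$ to pin down the residual constant in the product asymptotic is a correct way to fill in a step the paper leaves as ``can be derived from \eqref{In}.''
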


The proofs of Lemmas \ref{lemF3} and \ref{lemF4} are analogous to those of Lemmas \ref{lem3} and \ref{lem4},
with \eqref{In} playing the role of the prime number theorem. In particular, with  $z=z_N= \exp(-1/N^2)$, 
the analogue of \eqref{lem3mvt} is
$$
0<\prod_{k=1}^{n+m} \left(1-\frac{z^k}{q^k}\right)^{I_k}-\prod_{k=1}^{n+m} \left(1-\frac{1^k}{q^k}\right)^{I_k}
\ll \frac{n+m}{N^2} \prod_{k=1}^{n+m} \left(1-\frac{z^k}{q^k}\right)^{I_k},
$$
for $n\le N$, by the mean value theorem and \eqref{In}.
The analogue of \eqref{lem4eq1} is
$$ \prod_{k=1}^{n+m} \left(1-\frac{z^k}{q^k}\right)^{I_k} = \frac{\exp\{-\gamma +I((n+m)/N^2)\} }{n+m}
\left(1+O\left(\frac{1}{N}\right)\right), \quad (n>N), 
$$
which can be derived from \eqref{In}.
The estimate \eqref{lem4eq2} corresponds to
$$  \sum_{k=1}^{n+m} \frac{k I_k}{(q/z)^k -1} = \frac{1-z^{n+m}}{1-z}+O(1), \quad (n>N),$$
which follows from Lemma \ref{Lq} and \eqref{In}.

\section{Proofs of corollaries to Theorem \ref{thm1}}\label{CorThm1Sec}

We need to estimate $\alpha = (1-e^{-\gamma}) c_\theta  = \lim_{N\to \infty} \alpha_N,$ where 
 \begin{equation}\label{aNdef}
 \alpha_N =   \sum_{n\le N} \frac{\chi(n)}{n}\Delta(n)
\prod_{p\le \theta(n)} \left(1-\frac{1}{p}\right)
\end{equation}
and
$$ \Delta(n) =\sum_{p\le \theta(n)}\frac{\log p}{p-1} - \log n . $$
Assume that there are real numbers $L_N$ and $R_N$ such that 
\begin{equation}\label{Deltaest}
L_N < \Delta(n)<  R_N \qquad (n>N),
\end{equation}
and let
$$  \varepsilon_N = \sum_{n>N}  \frac{\chi(n)}{n}\prod_{p\le \theta(n)} \left(1-\frac{1}{p}\right) 
=1 - \sum_{n\le N}  \frac{\chi(n)}{n}\prod_{p\le \theta(n)} \left(1-\frac{1}{p}\right),$$
by \eqref{S1}.
The last equation allows us to calculate $\varepsilon_N$ on a computer based on values of $\chi(n)$ and $\theta(n)$ for $n\le N$.
We have
\begin{equation}\label{alphaest}
\alpha_N + L_N \varepsilon_N < \alpha  <\alpha_N + R_N \varepsilon_N.
 \end{equation}
 
To determine values for $L_N$ and $R_N$ which satisfy \eqref{Deltaest}, 
we need an effective estimate for the sum over primes in the definition of  $\Delta(n)$.

\begin{lemma}\label{pslem}
Let
\begin{equation}\label{etadef}
 \eta(x) = \sum_{p\le x} \frac{\log p}{p-1}  - \log x + \gamma .
\end{equation}
We have $\eta(x) \ll \exp(-\sqrt{\log x})$ and
$$ |\eta(x)| \le  E(x) := \frac{0.084}{\log^2 x}\qquad (x \ge 2^{25}).$$
Assuming the Riemann hypothesis, we have 
$ |\eta(x)| \le \frac{\log^2 x}{7\sqrt{x}}$ for $x\ge 25$.
\end{lemma}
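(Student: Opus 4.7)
The plan is to reduce $\eta(x)$ to the classical prime-number-theorem remainder $L(x) := \sum_{n \le x} \Lambda(n)/n$ via an elementary identity, and then bound $L(x)$ through Abel summation from known estimates on $\psi(x) - x$. Expanding $(p-1)^{-1}$ as a geometric series,
$$\sum_{p\le x}\frac{\log p}{p-1} \;=\; \sum_{\substack{p\le x\\ k\ge 1}}\frac{\log p}{p^k} \;=\; L(x) + T(x),$$
where $T(x) := \sum_{p\le x,\, k\ge 2,\, p^k>x}(\log p)/p^k$ is the overhang sum. Elementary estimation shows $T(x) \ll (\log x)/\sqrt{x}$ (the $k=2$ range $\sqrt{x} < p \le x$ dominates, the $k\ge 3$ ranges are even smaller), so this piece is harmless at every bound in the lemma.

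For the main term, I would apply Abel summation to write $L(x) = \psi(x)/x + \int_2^x \psi(t)\,t^{-2}\,dt$, substitute $\psi(t) = t + R(t)$, and pin down the constant of integration via the standard limit $L(x) - \log x \to -\gamma$. The outcome is the clean representation
$$L(x) - \log x + \gamma \;=\; \frac{R(x)}{x} \;-\; \int_x^\infty \frac{R(t)}{t^2}\,dt,$$
so that $\eta(x) = R(x)/x - \int_x^\infty R(t)/t^2\,dt + T(x)$. Plugging in the classical de la Vall\'ee Poussin bound $R(t) \ll t\exp(-c\sqrt{\log t})$ then gives $\eta(x) \ll \exp(-\sqrt{\log x})$ after rescaling $c$. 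For the RH bound, Schoenfeld's inequality $|R(t)| \le \sqrt{t}\,(\log t)^2/(8\pi)$ for $t \ge 73$ feeds through the identity and the integration by parts $\int_x^\infty (\log t)^2 t^{-3/2}\,dt = (2\log^2 x + 8\log x + 16)/\sqrt{x}$ to produce a bound of shape $(3\log^2 x + 8\log x + 16)/(8\pi\sqrt{x})$. For the explicit unconditional bound at $x \ge 2^{25}$, I would pick a sufficiently strong modern explicit PNT estimate (of Dusart or Platt--Trudgian) of the form $|R(t)| \le Ct/(\log t)^k$ with $k \ge 4$ and propagate it through the same identity.

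The main obstacle is numerical: matching the constants $0.084$ and $1/7$ and the thresholds $x \ge 2^{25}$ and $x \ge 25$. For the RH bound, the factor $1/(8\pi) \approx 0.040$ sits comfortably below $1/7 \approx 0.143$, so there is enough room to absorb the lower-order terms $8\log x + 16$ and the tail $T(x)$ into $\log^2 x/7$, provided one verifies the short initial range $25 \le x < 73$ (where Schoenfeld's bound does not yet apply) by direct computation of $\eta(x)$. For the unconditional bound, the threshold $2^{25}$ is chosen to fit a specific published explicit PNT estimate, and the constant $0.084$ must be optimised by careful tracking through both contributions. These are finite computer checks, but they demand careful bookkeeping of constants.
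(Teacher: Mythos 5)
Your skeleton is the same as the paper's: an exact identity expressing $\eta(x)$ in terms of the Chebyshev remainder, namely $\eta(x)=\frac{R(x)}{x}-\int_x^\infty\frac{R(t)}{t^2}\,dt+(\text{correction})$, followed by known bounds on $R$. The qualitative claim $\eta(x)\ll\exp(-\sqrt{\log x})$ goes through exactly as you describe. The paper differs in that it works with $\vartheta$ rather than $\psi$: Rosser--Schoenfeld's Eq.~4.21 gives $\eta(x)+\sum_{p>x}\frac{\log p}{p(p-1)}=\frac{\vartheta(x)-x}{x}-\int_x^\infty\frac{\vartheta(y)-y}{y^2}\,dy$, whose correction term is $O(\frac{\log x}{x})$, and it then quotes Axler (unconditionally) and Schoenfeld (under RH) for explicit bounds on that \emph{entire combination}.

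This difference is not cosmetic, and it is where your plan has a genuine gap. First, your overhang term $T(x)$ is not ``harmless'': $T(x)\approx\sum_{\sqrt x<p\le x}\frac{\log p}{p^2}\sim\frac{1}{\sqrt x}$, which at $x=2^{25}$ is about $1.7\times10^{-4}$, i.e.\ already $\approx 0.05/\log^2x$ --- some $60\%$ of the entire budget $0.084/\log^2x$ (this is the $\psi$-versus-$\vartheta$ discrepancy $\psi-\vartheta\approx\sqrt x$ surfacing). Second, and more seriously, no pointwise estimate $|R(t)|\le Ct/\log^kt$ pushed through the identity by the triangle inequality can deliver the remaining $\approx 0.03/\log^2x$ at $x=2^{25}\approx3.4\times10^7$: at that height the best pointwise information is of strength $|R(t)|\lesssim\sqrt t$ (from computation), and then $\frac{|R(x)|}{x}+\int_x^{\infty}\frac{|R(t)|}{t^2}\,dt$ alone already costs several multiples of $1/\sqrt x\approx0.05/\log^2x$. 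The constant $0.084$ is essentially Axler's $\frac{3}{40}(1+\frac{2}{\log x})$ evaluated at $x=2^{25}$, with almost no slack; reproducing it requires the cancellation between $\frac{\vartheta(x)-x}{x}$ and $-\int_x^\infty\frac{\vartheta(y)-y}{y^2}\,dy$ (both terms have $\vartheta(y)-y\approx R(y)-\sqrt y$, and the signed $-\sqrt y$ parts largely cancel), which Axler's Proposition~8 supplies and a naive propagation does not. A similar accounting affects your RH bound: with Schoenfeld's $|R(t)|\le\frac{\sqrt t\log^2t}{8\pi}$ the sum $\frac{3\log^2x+8\log x+16}{8\pi\sqrt x}+T(x)$ only falls below $\frac{\log^2x}{7\sqrt x}$ for $x\gtrsim4\times10^7$, so the ``short initial range'' you must check by computer is $[25,\,4\times10^7]$, not $[25,73)$ --- still finite, but orders of magnitude larger than you suggest (the paper, using Schoenfeld's Corollary~2 for the combined quantity, only needs to verify $25\le x\le160000$). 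In short: replace the $\psi$-identity by the $\vartheta$-identity and cite explicit bounds for the combined expression (Axler, Schoenfeld), or your constants will not close.
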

\begin{proof}
Rosser and Schoenfeld \cite[Eq. 4.21]{RS} give the relation
$$\tilde{\eta}(x):= \eta(x) +\sum_{p>x} \frac{\log p}{p(p-1)} = \frac{\vartheta(x)-x}{x}-\int_x^\infty \frac{\vartheta(y)-y}{y^2} dy,$$
where $\vartheta(x)=\sum_{p\le x} \log p$. The estimate $ \eta(x)  \ll \exp(-\sqrt{\log x})$ now follows from the prime number theorem.

Axler \cite[Prop. 8]{Axler} shows that for $x\ge 30972320=2^{24.88...}$, 
$$| \tilde{\eta}(x)| \le  \frac{3}{40 \log^2 x}\left(1+\frac{2}{\log x}\right), $$
which implies our estimate $|\eta(x)|  \le  E(x)$ for $x\ge 2^{25}$, since
\begin{equation}\label{etatilde}
0< \sum_{p>x} \frac{\log p}{p(p-1)} < \int_x^\infty \frac{\log y}{y^2} dy = \frac{1+\log x}{x}. 
\end{equation}

Assuming the Riemann hypothesis, Schoenfeld \cite[Cor. 2]{Schoen} gives a bound for $|\tilde{\eta}(x)|$, 
which together with \eqref{etatilde} yields our bound for $|\eta(x)|$ if $x\ge 160000$. 
For $25\le x\le 160000$, we verify the result with a computer.
\end{proof}

\subsection{Proof of Corollaries \ref{Dcor} and \ref{Phicor}}\label{calcsection}
For Corollary \ref{Dcor} we have $\theta(n)=tn$ and
\begin{equation}\label{Deltatn}
  \Delta(n) = \sum_{p\le tn}\frac{\log p}{p-1} - \log n  = \eta(tn) +\log t - \gamma.
\end{equation}
Lemma \ref{pslem} shows that condition \eqref{Deltaest} is satisfied with $R_N= \log t -\gamma+E(t N)$ and $L_N= \log t -\gamma - E(tN)$, 
if $tN\ge 2^{25}$.
For $N=2^{25}$ and $t=2$, we calculate $\alpha_N$ and $\varepsilon_N$ with a computer 
and find that \eqref{alphaest} yields $1.224806  < c_2< 1.224852 $, hence $c_2 = 1.2248...$.
All the other estimates in Corollaries \ref{Dcor} and \ref{Phicor} are derived similarly. To obtain the decimal places as shown, $tN\le 2^{30}$ 
suffices in all cases.

\subsection{Proof of Corollary \ref{Dcor2}}
Theorem \ref{thm1} and \eqref{Deltatn} yield
$$(1-e^{-\gamma}) c_t =  \sum_{n\in\mathcal{B}} \frac{1}{n}\Bigl(\log t -\gamma + \eta(tn)\Bigr) \prod_{p\le nt} \left(1-\frac{1}{p}\right).$$
Together with \eqref{S1} we obtain
\begin{equation}\label{deltaeta}
\inf_{n\ge 1}\eta(nt)\le \delta_t \le \sup_{n\ge 1}\eta(nt).
\end{equation}

The other estimates for $\delta_t$ follow from \eqref{deltaeta} and Lemma \ref{pslem}, since $E(x)$ is decreasing for $x\ge 2$ and $\frac{\log^2 x}{7\sqrt{x}}$ is decreasing for $x\ge 55$.

\subsection{Proof of Corollary \ref{Pcor}}
We use the fact that $\theta(n)=\sigma(n)+1\ge 2n$ whenever $n$ is practical \cite[Lemma 2]{Mar}.
We have
$$ \Delta(n)\ge \sum_{p\le 2n} \frac{\log p}{p-1} - \log n = \eta(2n)+\log 2 - \gamma > \log 2 - \gamma -E(2n),$$
 for $2n\ge 2^{25}$, and hence
 $$ \alpha \ge \alpha_N + (\log 2 - \gamma - E(2N)) \varepsilon_N,$$
 for $2N \ge 2^{25}$. The lower bound in Corollary \ref{Pcor} now follows from calculating $\alpha_N$ and $\varepsilon_N$ for $N=2^{26}$.

For the upper bound in Corollary \ref{Pcor}, we have, for $n\ge 2^{25}$, 
\begin{equation}\label{rob}
\begin{split}
\Delta(n) & =\sum_{p\le \sigma(n)+1} \frac{\log p}{p-1} - \log n \\
& = \eta(\sigma(n)+1) +\log(\sigma(n)+1) -\gamma -\log n \\
& \le E(2n) + \log((\sigma(n)+1)/n) -\gamma \\
& \le  \log_3(n) + E(2n) +1/n +\frac{0.6483}{e^\gamma (\log_2 n)^2},\\
&=:  \log_3(n)  + \beta_n,
\end{split}
\end{equation}
say, where $\log_k$ denotes the $k$-fold logarithm.
For the last inequality of \eqref{rob} we used Robin's \cite[Theorem 2]{Robin} unconditional upper bound
$$
\frac{\sigma(n)}{n} \le e^\gamma \log_2 n\left(1+\frac{0.6483}{e^\gamma (\log_2 n)^2} \right) \qquad (n\ge 3)
$$ 
and the inequality $\log(1+x)\le x$.
If $n\ge 2$ is practical, 
$$
 \prod_{p\le \sigma(n)+1}\left(1-\frac{1}{p}\right) \le  \prod_{p\le 2n}\left(1-\frac{1}{p}\right)
 \le \frac{e^{-\gamma}}{\log 2n} \left(1+\frac{1}{2\log^2 (2n)}\right) <  \frac{e^{-\gamma}}{\log n},
$$
by \cite[Theorem 7]{RS}.
For $M>N\ge 2^{25}$, we get
\begin{equation*}
\begin{split}
  \alpha-\alpha_N & \le \varepsilon_N \beta_N 
   + \sum_{n>N} \frac{\chi(n)}{n} \log_3 n \prod_{p\le \sigma(n)+1}\left(1-\frac{1}{p}\right) \\
& \le \varepsilon_N \beta_N + \varepsilon_N  \log_3 M 
   + \sum_{n>M} \frac{\chi(n)}{n} (\log_3 n -\log_3 M) \prod_{p\le \sigma(n)+1}\left(1-\frac{1}{p}\right)  \\
  & \le  \varepsilon_N \beta_N + \varepsilon_N  \log_3 M 
    + e^{-\gamma} \sum_{n>M} \frac{\chi(n)}{n \log n} (\log_3 n -\log_3 M) \\
    & \le  \varepsilon_N \beta_N + \varepsilon_N  \log_3 M  + e^{-\gamma}\frac{a(1+1/\log M)}{b^2(\log M)^b \log_2 M},
 \end{split}
\end{equation*}
for $a=1.185$ and $b=\mu-\nu$, by Lemmas \ref{PUB} and \ref{DNlem}.
For $N=2^{26}$, the last expression is minimized when $\log_3 M=4.15$, which results 
in the upper bound  $c = \alpha/(1-e^{-\gamma}) <  1.693$.
If one could use $a=2$ and $b=1$, i.e. improve Lemma \ref{PUB} to $P(x) \le 2x/\log x$ for $x\ge 2^{26}$, which is likely true based on empirical evidence, 
the same method would yield $c< 1.441$.

\section{Proofs of Corollaries to Theorem \ref{thm2}}\label{secphipra}

Following \cite{PTW}, the set of $\varphi$-practical numbers arises as described in Section \ref{phiprasec} with $\theta(n)=n+2$ and
a set of starters $\mathcal{S}$ defined as follows.  
Let $P^+(n)$ (resp. $P^-(n)$) denote the largest (resp. smallest) prime factor of $n$.
We call $d$ an initial divisor of $n$ if $d|n$ and $P^+(d)<P^-(n/d)$.
A starter is a $\varphi$-practical number $m$ such that either
$m/P^+(m)$ is not $\varphi$-practical or $P^+(m)^2|m$. A $\varphi$-practical number $n$ is said
to have starter $m$ if $m$ is a starter, $m$ is an initial divisor of $n$, and $n/m$ is squarefree.
Each $\varphi$-practical number $n$ has a unique starter.

\subsection{The lower bound in Corollary \ref{Phicor2}}

Let $h$ be as in \eqref{hdef} and write
\begin{equation*}
  H(x)=\sum_{p\le x} \frac{\log p}{p+1}.
\end{equation*}
We have
$$H(x) +h+\gamma - \log x = \eta(x) + \sum_{p>x}\frac{2 \log p}{p^2-1} < \eta(x)+h .$$
Lemma \ref{pslem} implies
\begin{equation}\label{Hest}
 -E(x)< H(x) +h+\gamma - \log x < 2,
\end{equation}
where $x\ge 2^{25}$ in the first inequality, and $x\ge 1$ in the second.
We need a lower bound for $ C \zeta(2) (1-e^{-\gamma}) $, which by Theorem \ref{thm2} equals
$$ 
 \sum_{m\in \mathcal{S}} \sum_{n\in \mathcal{A}_m} \frac{1}{n}  
 \Biggl( \Bigl\{\log m -H(P^+(m)) - \lambda\Bigr\} +  \Bigl\{H(n+2) - \log n +\lambda\Bigr\} \Biggr) 
 \prod_{p\le n+2} \left(1+\frac{1}{p}\right)^{-1},
$$
for any real number $\lambda$. Lemma 3.5 of \cite{PTW} shows that \eqref{lemA1eq} is valid for $s=1$.
Thus the last expression can be written as
\begin{multline*}
\sum_{m\in \mathcal{S}} \frac{1}{m}  
 \Bigl\{\log m -H(P^+(m)) - \lambda\Bigr\} 
 \prod_{p\le P^+(m)} \left(1+\frac{1}{p}\right)^{-1} \\
+
\sum_{n\in \mathcal{A}} \frac{1}{n}  
 \Bigl\{H(n+2) - \log n +\lambda\Bigr\}
 \prod_{p\le n+2} \left(1+\frac{1}{p}\right)^{-1}\\
 = U(\lambda)+V(\lambda),
\end{multline*}
say. Let $U_N(\lambda)$ and $V_N(\lambda)$ denote the corresponding partial sums.
For a given $N$, we pick $\lambda$ such that the terms of both series are positive for $m,n>N$.
Then $C\zeta(2) (1-e^{-\gamma})= U(\lambda)+V(\lambda) > U_N(\lambda)+V_N(\lambda)$, which yields a lower bound for $C$
after dividing by $\zeta(2) (1-e^{-\gamma})$. For $n>N=2^{30}$, \eqref{Hest} implies
$$ H(n+2)-\log n \ge H(n)-\log n \ge -h -\gamma -E(n) > -1.7174.$$
For the series $U(\lambda)$, note that $m\in \mathcal{S}$ implies $m$ is $\varphi$-practical.
Thus $P^+(m) \le 2+ m/P^+(m)$, which yields $P^+(m)\le 2 +\sqrt{m}$. We have
$$ \log m -H(P^+(m)) \ge \log m -H(2+\sqrt{m}) \ge \log(m/(\sqrt{m}+2)) -1 \ge 2 ,$$
by \eqref{Hest}, for $m\ge 500$. Thus $\lambda=1.7174$ ensures that the terms in both series are positive for $m,n>N$. 
With $N=2^{30}$, we get $C> (U_N(\lambda)+V_N(\lambda))/(\zeta(2) (1-e^{-\gamma}))>0.945$

\subsection{The upper bound in Corollary \ref{Phicor2}}

Using a similar strategy as for the lower bound would require an explicit upper bound for the counting function of starters,
since $ \log m -H(P^+(m))$ grows unbounded.
Instead, we will define a function $\theta(n)$ such that $\mathcal{A}\subset \mathcal{B}_\theta$ and hence $C\le c_\theta$.
We then estimate $c_\theta$ as in Section \ref{CorThm1Sec}.

Let 
$$
\theta(n)=
\begin{cases}
n+2 & \text{if } n\in \mathcal{A}; \\
mp+2      &  \text{if } n=mp^a, \ m\in \mathcal{A}, \ p=m+2, \ a\ge2;\\
n+2-m\varphi(n/m) & \text{else,} 
\end{cases}
$$
where $m$ denotes the largest initial divisor of $n$ with $m\in \mathcal{A}$. 

To show that  $\mathcal{A}\subset \mathcal{B}_\theta$,  assume that $n\notin  \mathcal{B}_\theta$.
Then $n$ has an initial divisor $\tilde{n}$ such that $q:=P^-(n/\tilde{n})$ satisfies $q > \theta(\tilde{n})$.
First, if $\tilde{n}\in \mathcal{A}$, then $q>\tilde{n}+2$ and $n\notin \mathcal{A}$ by \cite[Lemma 3.3]{Thom}.
Second, if $\tilde{n}=mp^a$, $m\in \mathcal{A}$, $p=m+2$, $a\ge2$, then $q>mp+2$.
Since $\varphi(p^2)=p(p-1)>mp+1$ and $\varphi(q)=q-1>mp+1$, the number 
$mp+1$ cannot be written as a subsum of $\sum_{d|n} \varphi(d)$, so $n\notin \mathcal{A}$.
Third, if $m<\tilde{n}$ is the largest initial divisor of $\tilde{n}$ with $m\in \mathcal{A}$,
then $q>\tilde{n}+2-m\varphi(\tilde{n}/m)$, hence $\varphi(q)>\tilde{n}+1-m\varphi(\tilde{n}/m)$.
Lemmas 5.2 and 5.3 of \cite{PTW} show that the number $\tilde{n}+1-m\varphi(\tilde{n}/m)$ 
cannot be written as a subsum of $\sum_{d|n} \varphi(d)$, so $n\notin \mathcal{A}$.

To estimate $c_\theta$, we use Theorem \ref{thm1} and proceed as in Section \ref{CorThm1Sec}.
Since $\theta(n)\le n+2$, we have
\begin{equation*}
\begin{split}
 \Delta(n) \le \sum_{p\le n+2} \frac{\log p}{p-1} - \log n & = \eta(n+2)-\gamma +\log(n+2) -\log n \\
& \le  \frac{0.225}{\log^2 (n+2)} -\gamma + \frac{2}{n} =:R_n,
\end{split}
\end{equation*}
for $n\ge 2^{21}$, where the estimate for $\eta(n+2)$ follows from Lemma \ref{pslem} for $n\ge 2^{25}$, and for $2^{21}\le n < 2^{25}$ we
verify it by computation. For $N=2^{21}$, we get $c_\theta < (\alpha_N + \varepsilon_N R_N)/(1-e^{-\gamma}) < 0.967$.

\subsection{Proof of Corollaries \ref{Dcors} and \ref{Dcor2*}}
The calculations for Corollary \ref{Dcors} are analogous to those for Corollary \ref{Dcor}, with \eqref{Deltatn} replaced by 
$$
\Delta^*(n) = \sum_{p\le tn}\frac{\log p}{p+1} - \log n  = \log t - \gamma -h + \eta^*(tn) ,
$$
where $h$ is given by \eqref{hdef},
\begin{equation}\label{etadef*}
 \eta^*(x) = \eta(x)+\sum_{p> x} \frac{2\log p}{p^2-1},
\end{equation}
and $\eta(x)$ is as in \eqref{etadef}. 
Lemma 3.5 of \cite{PTW} shows that \eqref{lemA1eq} is valid for $s=1$, that is
 $$ 1=\sum_{n\in \mathcal{D}^*_t} \frac{1}{n} \prod_{p\le nt} \left(1+\frac{1}{p}\right)^{-1},$$
since $m=1$ and $\theta(n)=nt$. From \eqref{ct*def} we have
$$ c_t^* (1-e^{-\gamma}) \pi^2/6 = \sum_{n\in \mathcal{D}^*_t} \frac{1}{n} \Bigl(\log t - \gamma -h + \eta^*(tn) \Bigr)  \prod_{p\le nt} \left(1+\frac{1}{p}\right)^{-1},$$
which yields
\begin{equation}\label{deltaeta*}
\inf_{n\ge 1}\eta^*(nt)\le \delta^*_t \le \sup_{n\ge 1}\eta^*(nt).
\end{equation}
The other assertions follow from \eqref{deltaeta*}, because Lemma \ref{pslem} 
remains valid when $\eta(x)$ is replaced by $\eta^*(x)$,
if we replace the range $x\ge 25$ by $x\ge 33$ for the bound that assumes the Riemann hypothesis.
Indeed, we have 
$$ 0<\eta^*(x)-\tilde{\eta}(x) = \sum_{p>x} \frac{2\log p}{p^2-1}-\frac{\log p}{p(p-1)} < \sum_{p>x} \frac{\log p}{p^2-1} < \frac{1+\log x}{x},$$
the same upper bound as we used for $\tilde{\eta}(x)-\eta(x)$ in the proof of Lemma \ref{pslem}.

\section{Proofs of corollaries to Theorem \ref{thm3}}
Theorem \ref{thm3} says that $c_q(m)=\frac{\alpha/m}{1-e^{-\gamma}}$, where $\alpha = \lim_{N\to \infty} \alpha_N$,
$$\alpha_N = \sum_{0\le n\le N} f_q(n,m) \Delta_m(n) \lambda(n+m),$$
$$\lambda(n) = \prod_{k=1}^n \left(1-\frac{1}{q^k}\right)^{I_k} $$
and
$$ \Delta_m(n) = \sum_{k=1}^{n+m}\frac{k I_k}{q^k-1}\ -\ n.$$
We have
$$\Delta_m(n) = m+ \sum_{k=1}^{n+m}\frac{k I_k - (q^k-1)}{q^k-1}=  m +\sum_{k>n+m}\frac{q^k-1 - k I_k}{q^k-1} ,$$
by Lemma \ref{Lq}.
With the bounds \cite[p. 142, Ex. 3.26 and Ex. 3.27]{LN} 
\begin{equation}\label{In}
 \frac{q^k}{k} - \frac{2q^{k/2}}{k}< I_k \le \frac{q^k}{k} \qquad (k\ge 1), 
\end{equation}
we obtain
\begin{equation}\label{Deltam}
-L_{n+m}:=-\sum_{k>n+m}\frac{1}{q^k-1}\le  \Delta_m(n) -m \le  \sum_{k>n+m}\frac{2q^{k/2}-1}{q^k-1}=:R_{n+m}.
\end{equation}
Lemma 7 of \cite{DPD} shows that 
\begin{equation}\label{lem7DPD}
1=\sum_{n\ge 0} f_q(n,m)\lambda(n+m),
\end{equation}
so that
$$ \varepsilon_N:=\sum_{n>N} f_q(n,m)\lambda(n+m)
= 1-\sum_{0\le n\le N} f_q(n,m)\lambda(n+m),$$
which we can calculate on a computer. We have
$$  \alpha_N+(m- L_{N+m})\varepsilon_N  \le  \alpha \le \alpha_N+(m+ R_{N+m})\varepsilon_N,$$
which yields bounds for $c_q(m)$ upon dividing by $m(1-e^{-\gamma})$.
To obtain the accuracy as shown in Table \ref{table4}, $N=50$ or less suffices in all cases.

From \eqref{Deltam} and \eqref{lem7DPD} we get
\begin{equation*}
\begin{split}
 \alpha &=  \sum_{n\ge 0} f_q(n,m) \Delta_m(n) \lambda(n+m) \\
 & =  \sum_{n\ge 0} f_q(n,m)\left\{m+O\left(q^{-(n+m+1)/2}\right)\right\} \lambda(n+m)\\
 & = m + O\left(\frac{1}{m q^{(m+1)/2}} \right),
 \end{split}
 \end{equation*}
since $f_q(n,m)\ll m/(n+m)$ and $\lambda(n+m) \ll 1/(n+m)$. 
Dividing by $m(1-e^{-\gamma})$ yields Corollary \ref{Polycor2}.

\begin{lemma}\label{Lq}We have
$$\sum_{k=1}^{\infty}\frac{k I_k - (q^k-1)}{q^k-1} =0.$$
\end{lemma}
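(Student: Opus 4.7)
The plan is to extract Lemma \ref{Lq} from the generating-function identity underlying the zeta function of $\mathbb{F}_q[X]$, by taking a logarithmic derivative and then passing to a boundary value. Starting from
\[
\frac{1}{1-qz}=\prod_{k\ge 1}(1-z^k)^{-I_k}\qquad (|z|<1/q),
\]
I would take logarithms, differentiate with respect to $z$, and multiply by $z$ to obtain
\[
\frac{qz}{1-qz}=\sum_{k\ge 1}\frac{k I_k\, z^k}{1-z^k}.
\]

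Substituting $z=u/q$ with $|u|<1$ rescales this to
\[
\frac{u}{1-u}=\sum_{k\ge 1}\frac{k I_k\, u^k}{q^k-u^k},
\]
and subtracting the geometric series $\sum_{k\ge 1}u^k = u/(1-u)$ from both sides yields, for each $0<u<1$,
\[
0=\sum_{k\ge 1}\frac{k I_k-(q^k-u^k)}{q^k-u^k}\,u^k.
\]

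The remaining step is to take $u\to 1^-$ term-by-term inside the sum. By \eqref{In} we have $|kI_k-q^k|\le 2q^{k/2}$, so $|kI_k-(q^k-u^k)|\le 2q^{k/2}+1$, while $q^k-u^k\ge q^k/2$ for $k\ge 1$, $q\ge 2$, $u\in[0,1]$. Hence each summand is dominated uniformly in $u$ by a constant multiple of $q^{-k/2}$, which is absolutely summable, and dominated convergence gives
\[
0=\sum_{k\ge 1}\frac{k I_k-(q^k-1)}{q^k-1},
\]
as claimed. I do not anticipate any serious obstacle: the logarithmic-derivative step is formal, and the only point that requires care is the interchange of limit and summation, which is controlled directly by the crude bound \eqref{In}. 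The underlying idea is simply that each of $\sum_k kI_k u^k/(q^k-u^k)$ and $\sum_k u^k$ sums to $u/(1-u)$ and diverges as $u\to 1^-$, but their difference is an absolutely convergent series whose $u=1$ value is the quantity in Lemma \ref{Lq}.
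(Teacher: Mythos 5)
Your proposal is correct and is essentially the paper's own proof: after the substitution $z=u/q$ your starting identity $\tfrac{1}{1-qz}=\prod_{k}(1-z^k)^{-I_k}$ becomes the paper's $\tfrac{1}{1-z}=\prod_k(1-z^k/q^k)^{-I_k}$, and both arguments then take the logarithmic derivative, subtract the geometric series $\sum_k u^k=u/(1-u)$, and pass to the boundary value at $1$ using the bound \eqref{In} (the paper via uniform convergence on a disk of radius $(1+\sqrt q)/2$, you via domination on $[0,1]$ — an immaterial difference). No gaps.
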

\begin{proof}
For $|z|<1$ we have \cite[p. 13]{Rosen}
\begin{equation*}
  \frac{1}{1-z} = \prod_{k\ge 1} \left(1-\frac{z^k}{q^k}\right)^{-I_k}.
\end{equation*}
Taking logarithms and differentiating yields
\begin{equation}\label{polyeuler}
\frac{1}{1-z} = \frac{1}{z} \sum_{k\ge 1}  \frac{k I_k}{q^k/z^k-1}. 
\end{equation}
Now write the left-hand side as $z^{-1}\sum_{k\ge 1} z^k$ and subtract to get
$$ 0=\sum_{k\ge 1}\frac{(kI_k-q^k+z^k)z^k}{q^k-z^k}.$$
If $|z|<\sqrt{q}$, the numerators in the last sum are $\ll q^{k/2}z^k$ by \eqref{In}, while the denominators are $\gg q^k$. 
Thus the last series converges uniformly on the disk $|z|<(1+\sqrt{q})/2$ and is therefore continuous at $z=1$, which is all we need.
\end{proof}

\section{An explicit upper bound for $P(x)$}

We first need an explicit upper bound for sums of powers of $\tau(n)$, the number of divisors of $n$.
Let $$\mu=-\log(\log 2 )/\log 2 = 0.528766...$$ and let $$\nu= 2^\mu -1 =1/\log 2 -1 =0.442695...$$ 
This choice of $\mu$ maximizes $\mu-\nu = \mu -2^\mu+1$.

\begin{lemma}\label{taulem}For $x\ge 2$, 
$$ \sum_{n\le x} (\tau(n))^\mu \le 1.315\, x (\log x)^{\nu} \left(1+\frac{0.2}{(\log x)^2}\right)^\nu.$$
\end{lemma}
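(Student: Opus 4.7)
The plan is to represent $\tau(n)^\mu$ as a Dirichlet convolution with a non-negative multiplicative function, bound the resulting double sum by an Euler product comparable to $\prod_{p\le x}(1-1/p)^{-\nu}$, and finally invoke an effective form of Mertens' theorem to extract the $x(\log x)^\nu$ behaviour.

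First, define the multiplicative function $a$ by $a(p^k) = (k+1)^\mu - k^\mu$ for every prime $p$ and every $k\ge 1$, so that $\tau(n)^\mu = (1\ast a)(n)$. Since $0<\mu<1$, the map $t\mapsto t^\mu$ is increasing and concave on $[1,\infty)$, so $0 \le a(p^k) \le a(p) = 2^\mu - 1 = \nu$ for every $k\ge 1$. Swapping summation and using $\lfloor x/d\rfloor \le x/d$ gives
$$\sum_{n\le x}\tau(n)^\mu \;=\; \sum_{d\le x} a(d)\lfloor x/d\rfloor \;\le\; x\sum_{d\le x}\frac{a(d)}{d}\;\le\; x\prod_{p\le x}E_p,$$
where $E_p = 1 + \sum_{k\ge 1}a(p^k)/p^k = (1-1/p)\sum_{k\ge 0}(k+1)^\mu/p^k$.

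The next step is to compare each $E_p$ with $(1-1/p)^{-\nu}$. By the choice of $\nu$, the Taylor expansions of $E_p$ and $(1-1/p)^{-\nu}$ in powers of $1/p$ agree at orders $p^0$ and $p^{-1}$, so $R_p := E_p (1-1/p)^\nu$ satisfies $R_p = 1 + O(1/p^2)$. (A direct midpoint-concavity check comparing $a(p^2) = 3^\mu - 2^\mu$ with $\nu(\nu+1)/2 = (4^\mu + 2^\mu)/2 \cdot (-1) + 3^\mu \ge 0$ confirms $R_p \ge 1$.) Hence $K := \prod_p R_p$ converges absolutely to an explicit finite constant; truncating the product at a moderate prime cutoff and bounding the tail by an effective estimate of the form $|R_p - 1|\le C_\mu/p^2$ yields a rigorous numerical bound $K \le K_0$ with $K_0$ close to $1$.

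Finally, invoking the Rosser--Schoenfeld inequality
$$\prod_{p\le x}\left(1-\frac{1}{p}\right)^{-1} \le e^\gamma \log x \left(1+\frac{1}{2\log^2 x}\right)\qquad (x\ge 2)$$
and raising it to the power $\nu$ gives
$$\sum_{n\le x}\tau(n)^\mu \;\le\; K_0\, x\,(e^\gamma\log x)^\nu\left(1+\frac{1}{2\log^2 x}\right)^\nu.$$
Since $K_0 e^{\gamma\nu}$ is comfortably below $1.315$, the advertised bound follows once one absorbs the Rosser--Schoenfeld correction $(1+1/(2\log^2 x))^\nu$ into the tighter $(1+0.2/\log^2 x)^\nu$ claimed in the lemma. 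For all sufficiently large $x$ this is immediate from the remaining slack; for $x$ in a bounded initial range $2 \le x \le X_0$ it is verified by direct computation of $\sum_{n\le x}\tau(n)^\mu$.

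The main obstacle is numerical bookkeeping. One must evaluate $K$ precisely enough so that the combined constant $K_0 e^{\gamma\nu}$ stays under $1.315$ despite $R_p > 1$ at every prime, and one must either deploy a sharper Mertens-type estimate (e.g.\ Dusart's refinements) or carry out a direct computational check on the finite range where the Rosser--Schoenfeld correction $1/(2\log^2 x)$ exceeds the $0.2/\log^2 x$ stated in the lemma.
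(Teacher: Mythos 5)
Your argument is essentially the paper's proof: the paper obtains your Euler-product bound by citing Norton's Lemma 2.5, factors out $\prod_{p\le x}(1-1/p)^{-\nu}$ and bounds the resulting correction product numerically by $1.0181$, and then applies exactly the ``sharper Mertens-type estimate'' you mention as an alternative, namely Dusart's $\prod_{p\le x}(1-1/p)^{-1}<e^{\gamma}\log x\,(1+0.2/\log^{2}x)$ for $x\ge 2973$, finishing with a computer check for $x<2973$. One caveat on your ``comfortably below'': the margin is razor-thin ($1.0181\,e^{\gamma\nu}\approx 1.3146$ versus $1.315$), so the Dusart route is effectively forced --- with only Rosser--Schoenfeld the finite range you would have to verify directly grows to around $10^{8}$--$10^{9}$ rather than the paper's $x<2973$.
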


\begin{proof}
Lemma 2.5 of Norton \cite{Nor} implies
\begin{multline*}
\sum_{n\le x} (\tau(n))^\mu  \le x \prod_{p\le x} \left( 1+ \sum_{k\ge 1} \frac{(k+1)^\mu - k^\mu}{p^k}\right) \\
= x  \prod_{p\le x} \left(1-\frac{1}{p}\right)^{-\nu}  
\prod_{p\le x} \left\{  \left(1-\frac{1}{p}\right)^{\nu}    \left( 1+ \sum_{k\ge 1} \frac{(k+1)^\mu - k^\mu}{p^k}\right) \right\}.
\end{multline*}
The second product clearly converges. With the help of a computer we find that it is less than $1.0181$ for all $x\ge 1$.
To estimate the first product, we use the following result by Dusart \cite[Theorem 6.12]{Dusart}: For $x\ge 2973$,
$$ \prod_{p\le x} \left(1-\frac{1}{p}\right)^{-1}  < e^\gamma \log x \left(1+\frac{0.2}{\log^2 x}\right).$$
Since $1.0181 e^{\gamma  \nu} < 1.315$, the result follows for $x\ge 2973$. For $x<2973$, we verify the lemma with a computer.
\end{proof}

\begin{lemma}\label{PUB}
For $x\ge 2$,
$\displaystyle P(x) \le  \frac{1.185 \, x}{ (\log x)^{ \mu -\nu}}.$
\end{lemma}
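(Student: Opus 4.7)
The plan is to exploit the classical fact that every practical number $n\ge 1$ satisfies $2^{\tau(n)}\ge n+1$, since the $n+1$ integers $0,1,\ldots,n$ all arise as distinct subsums of the $\tau(n)$ divisors of $n$, while the total number of subsums is at most $2^{\tau(n)}$. Taking logarithms and then $\mu$-th powers gives the key inequality
$$ 1<(\log 2)^\mu\,\frac{\tau(n)^\mu}{(\log n)^\mu}\qquad (n\ge 2,\ n\in\mathcal{P}).$$
Summing over $2\le n\le x$ and then discarding the practicality restriction on the right, one obtains
$$ P(x)\le 1+(\log 2)^\mu\sum_{2\le n\le x}\frac{\tau(n)^\mu}{(\log n)^\mu},$$
which reduces the problem to bounding a weighted divisor-power sum to which Lemma \ref{taulem} applies after partial summation.

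Writing $T(y)=\sum_{n\le y}\tau(n)^\mu$, Abel summation yields
$$\sum_{2\le n\le x}\frac{\tau(n)^\mu}{(\log n)^\mu}=\frac{T(x)}{(\log x)^\mu}+\mu\int_2^x\frac{T(y)\,dy}{y(\log y)^{\mu+1}}-\frac{1}{(\log 2)^\mu}.$$
Pleasantly, the constant $-(\log 2)^{-\mu}$ cancels the additive $+1$ after multiplication by $(\log 2)^\mu$, leaving
$$ P(x)\le (\log 2)^\mu\frac{T(x)}{(\log x)^\mu}+(\log 2)^\mu\mu\int_2^x\frac{T(y)\,dy}{y(\log y)^{\mu+1}}.$$
Applying Lemma \ref{taulem} bounds the first term by $1.315\,(\log 2)^\mu\,x(\log x)^{\nu-\mu}(1+0.2/\log^2 x)^\nu$, and the numerical coincidence $1.315\,(\log 2)^\mu\approx 1.083$ already places the dominant contribution comfortably below $1.185\,x(\log x)^{\nu-\mu}$.

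For the integral, I would substitute $u=\log y$ to reach $1.315\,\mu\int_{\log 2}^{\log x}u^{\nu-\mu-1}e^u(1+0.2/u^2)^\nu\,du$, and integrate by parts once to produce a main term of order $\mu\,x(\log x)^{\nu-\mu-1}$, which is smaller than the boundary term by a factor $\asymp 1/\log x$. Combining the two contributions shows that for every $x$ beyond some explicit threshold $x_0$ one has $P(x)\le 1.185\,x/(\log x)^{\mu-\nu}$, and for $2\le x\le x_0$ the inequality is verified by direct enumeration of practical numbers (a routine task, since $P(x)\ll x/\log x$ while the right-hand side grows essentially like $x$).

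The main obstacle is quantitative bookkeeping: the prefactor $1.083$ is already rather close to $1.185$, and the inflation factor $(1+0.2/\log^2 y)^\nu$ from Lemma \ref{taulem} can be as large as $\approx 1.17$ near $y=2$, so a crude bound on the integral would spoil the budget. I expect to split the range of integration at a suitable $y_1$ where $(1+0.2/\log^2 y)^\nu$ is already sufficiently close to $1$, treating $[y_1,x]$ by the asymptotic estimate above and $[2,y_1]$ by a cruder bound that is harmless because the weight $(\log y)^{\nu-\mu-1}$ makes the integrand uniformly small on a bounded initial segment. Once $x_0$ is chosen, the remaining computational verification on $[2,x_0]$ is straightforward.
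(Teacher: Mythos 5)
Your proposal is correct and follows essentially the same route as the paper: the bound $2^{\tau(n)}\ge n$ for practical $n$, the resulting majorization $P(x)\le 1+\sum_{2\le n\le x}(\tau(n)\log 2/\log n)^{\mu}$, partial summation, and Lemma \ref{taulem}, with the remaining work being numerical bookkeeping on the boundary term and the integral. The paper handles that bookkeeping by observing that the resulting factor $f(x)$ is decreasing for $x\ge 6$ with $1.315(\log 2)^{\mu}f(1320)<1.185$, and disposes of $x<1320$ via the trivial bound $P(x)\le x$ (which suffices there since $(\log x)^{\mu-\nu}\le 1.185$), rather than by enumeration.
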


\begin{proof}
If $n$ is practical, then $2^{\tau(n)}\ge n$, since every natural number $m\le n$ can be expressed as a subsum of $\sum_{d|n} d $, and the number of  subsums is $2^{\tau(n)}$. Thus
$$ P(x) \le 1 + \sum_{2\le n \le x}\left( \tau(n) \frac{\log 2}{\log n} \right)^\mu.$$
Partial summation and Lemma \ref{taulem} yield
$$ P(x) \le 1.315 (\log 2)^\mu \frac{x}{(\log x)^{\mu-\nu}} f(x),$$
where
$$f(x) = \left(1+\frac{0.2}{(\log x)^2}\right)^\nu + \frac{\mu (\log x)^{\mu-\nu}}{x} 
\int_2^x \frac{ \left(1+\frac{0.2}{(\log t)^2}\right)^\nu}{(\log t)^{\mu-\nu+1}} dt. $$
Since $f(x)$ is decreasing for $x\ge 6$ and $1.315 (\log 2)^\mu f(1320)<1.185$, the result follows for $x\ge 1320$.
For $x<1320$, the trivial bound $P(x)\le x$ is sufficient.
\end{proof}
 
 \begin{lemma}\label{DNlem}
 If $P(x) \le a x (\log x)^{-b}$ for all $x \ge N$ and some constants $a, b>0$, then 
 $$\sum_{n>N} \frac{\chi(n)}{n\log n} ( \log_3 n - \log_3N) \le \frac{a(1+1/\log N)}{b^2 (\log N)^b \log \log N}.$$
 \end{lemma}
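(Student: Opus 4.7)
The plan is to convert the discrete weight $\log_3 n - \log_3 N$ into an integral, swap the order of summation and integration, and then bound the resulting inner sum by Abel summation using the hypothesis on $P$.

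First, write
$$ \log_3 n - \log_3 N = \int_N^n \frac{dy}{y \log y \, \log\log y}, $$
so that, after interchanging summation and integration (all terms are nonnegative), we obtain
$$ \sum_{n>N} \frac{\chi(n)}{n\log n}(\log_3 n - \log_3 N)
= \int_N^\infty \frac{1}{y\log y \, \log\log y}\Biggl(\sum_{n>y} \frac{\chi(n)}{n\log n}\Biggr) dy. $$

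Next, I would bound the inner tail sum. Abel summation with the hypothesis $P(x)\le ax(\log x)^{-b}$ gives, for $y\ge N$,
$$ \sum_{n>y}\frac{\chi(n)}{n\log n}
= -\frac{P(y)}{y\log y} + \int_y^\infty \frac{P(t)(1+\log t)}{t^2\log^2 t}\,dt
\le \frac{a}{b\,\log^b y} + \frac{a}{(b+1)\log^{b+1} y}, $$
where I have dropped the negative boundary term and split $(1+\log t)/\log^{b+2}t$ into its two summands before integrating.

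Substituting back and using that $\log\log y \ge \log\log N$ for $y\ge N$, the outer integral becomes
$$ \int_N^\infty \frac{dy}{y\log y\,\log\log y}\cdot\frac{a}{b\,\log^b y}
+\int_N^\infty \frac{dy}{y\log y\,\log\log y}\cdot\frac{a}{(b+1)\log^{b+1} y}, $$
each of which is estimated by pulling out $(\log\log N)^{-1}$ and integrating $dy/(y\log^{b+1}y)$ or $dy/(y\log^{b+2}y)$ explicitly. The resulting bound is
$$ \frac{a}{b^2(\log N)^b \log\log N}+\frac{a}{(b+1)^2(\log N)^{b+1}\log\log N}
\le \frac{a(1+1/\log N)}{b^2(\log N)^b \log\log N}, $$
using $b^2/(b+1)^2\le 1$ in the last step, which is exactly the claim.

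The only real obstacle is bookkeeping: one must check that the swap of sum and integral is justified (immediate from Tonelli, as all quantities are nonnegative), that the boundary contribution in the Abel summation vanishes at infinity (which follows from $P(X)/(X\log X)\le a/(\log X)^{b+1}\to 0$), and that the arithmetic of combining the two tail pieces yields the compact factor $1+1/\log N$ rather than something slightly larger.
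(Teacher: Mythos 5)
Your proof is correct, and it is exactly the argument the paper has in mind: the paper dismisses this lemma as ``a standard exercise using partial summation and integration by parts,'' and your combination of the integral representation of $\log_3 n-\log_3 N$, Tonelli, and Abel summation against $P(t)\le at(\log t)^{-b}$ supplies precisely those details, with the final arithmetic ($1/(b+1)^2\le 1/b^2$) yielding the stated factor $1+1/\log N$.
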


\begin{proof}
This is a standard exercise using partial summation and integration by parts. 
\end{proof}

\section*{Acknowledgments}
The author thanks the anonymous referee for several helpful suggestions.

The numerical calculations were performed with GP/PARI and Mathematica. These programs are available from the author upon request.

\end{document}